\newcommand{\COLORON}{0}
\newcommand{\NOTESON}{0}
\newcommand{\Debug}{0}
\newcommand{\comment}[1]{}
\newcommand{\COMMENT}[1]{}
\definecolor{darkgray}{rgb}{0.3,0.3,0.3}
\newcommand{\defi}[1]{{\color{darkgray}\emph{#1}}}
\newtheorem{proposition}{Proposition}[section]
\newtheorem{definition}[proposition]{Definition}
\newtheorem{theorem}[proposition]{Theorem}
\newtheorem{corollary}[proposition]{Corollary}
\newtheorem{lemma}[proposition]{Lemma}
\newtheorem{examp}[proposition]{Example}
\newcommand{\FIG}{0}
\newcommand{\note}[1]{ 

\hspace*{-30pt}
	{\color{blue}  NOTE: \color{Turquoise}{\small  \tt \begin{minipage}[c]{1.1\textwidth}  #1 \end{minipage} \ignorespacesafterend }} 
	
	}
\else \newcommand{\note}[1]{} \fi
\newcommand{\afsubm}[1]{ \ifnum \Debug = 1 {\mymargin{#1}}
\fi} 
\newcommand{\fig}[1]{Figure ``{#1}''}
\else \newcommand{\fig}[1]{Figure~\ref{#1}} \fi
\renewcommand{\color}[1]{}
\newcommand{\N}{\ensuremath{\mathbb N}}
\newcommand{\R}{\ensuremath{\mathbb R}}
\newcommand{\Z}{\ensuremath{\mathbb Z}}
\newcommand{\cc}{\ensuremath{\mathcal C}}
\newcommand{\ce}{\ensuremath{\mathcal E}}
\newcommand{\cp}{\ensuremath{\mathcal P}}
\newcommand{\cs}{\ensuremath{\mathcal S}}
\newcommand{\oo}{\ensuremath{\omega}}
\DeclareRobustCommand{\cev}[1]{%
  \mathpalette\do@cev{#1}%
}
\newcommand{\do@cev}[2]{%
  \fix@cev{#1}{+}%
  \reflectbox{$\m@th#1\vec{\reflectbox{$\fix@cev{#1}{-}\m@th#1#2\fix@cev{#1}{+}$}}$}%
  \fix@cev{#1}{-}%
}
\newcommand{\fix@cev}[2]{%
  \ifx#1\displaystyle
    \mkern#23mu
  \else
    \ifx#1\textstyle
      \mkern#23mu
    \else
      \ifx#1\scriptstyle
        \mkern#22mu
      \else
        \mkern#22mu
      \fi
    \fi
  \fi
}
\newcommand{\g}{\ensuremath{G\ }}
\newcommand{\G}{\ensuremath{G}}
\newcommand{\Ex}{\mathbb E}
\renewcommand{\Pr}{\mathbb{P}}
\newcommand{\Lr}[1]{Lemma~\ref{#1}}
\newcommand{\Tr}[1]{Theorem~\ref{#1}}
\newcommand{\Trs}[1]{Theorems~\ref{#1}}
\newcommand{\Sr}[1]{Section~\ref{#1}}
\newcommand{\Prr}[1]{Pro\-position~\ref{#1}}
\newcommand{\Cr}[1]{Corollary~\ref{#1}}
\newcommand{\Dr}[1]{De\-fi\-nition~\ref{#1}}
\newcommand{\Cg}{Cayley graph}
\newcommand{\fe}{for every}
\newcommand{\st}{such that}
\newcommand{\ti}{there is}
\newcommand{\wrt}{with respect to}
\newcommand{\labtequ}[2]{
 \begin{equation} \label{#1} 	\begin{minipage}[c]{0.9\textwidth}  #2 \end{minipage} \ignorespacesafterend \end{equation} }
\newcommand{\mymargin}[1]{
 \ifnum \Debug = 1
  \marginpar{%
    \begin{minipage}{\marginparwidth}\small%
      \begin{flushleft}%
        {\color{blue}#1}%
      \end{flushleft}%
   \end{minipage}%
  }%
 \fi
}%
\newcommand{\mySection}[2]{}
\newtheorem{question}[proposition]{Question}
\newtheorem{remark}[proposition]{Remark}
\newcommand{\myremark}[1]{\ifnum \Debug = 1 \tiny #1 \fi}
\newcommand{\pint}{interface}
\newcommand{\mpint}{multi-interface}
\newcommand{\ar}[1]{\vec{#1}}
\newcommand{\ard}[2]{\ensuremath{\vec{#1^{#2}}}}
\newcommand{\ra}[1]{\cev{#1}}
\newcommand{\qtl}{quasi-transitive planar lattice}
\newcommand{\MS}{\mathcal {MS}}
\newcommand{\pcs}{\ensuremath{\dot{p}_c}}
\newcommand{\ipint}{inner-\pint}
\title{On the exponential growth rates of lattice animals and interfaces}
\author[1]{Agelos Georgakopoulos}
\author[2]{Christoforos Panagiotis}
\affil[1,2]{{Mathematics Institute}\\
{University of Warwick}\\
{CV4 7AL, UK}\thanks{Supported by the European Research Council (ERC) under the European Union's Horizon 2020 research and innovation programme (grant agreement No 639046).}\\}
\affil[2]{{Universit{\'e} de Gen{\`e}ve}\\
{Section de Math{\'e}matiques}\\
{rue du Conseil-G{\'e}n{\'e}ral 7-9}\\
{1205 Geneva, Switzerland}}
\begin{document}
\date{}
\maketitle

\begin{abstract}
We introduce a formula for translating any upper bound on the percolation threshold  of a lattice \g into a lower bound on the exponential growth rate of lattice animals $a(G)$ and vice-versa. We exploit this to improve on the best known asymptotic bounds on $a(\Z^d)$ as $d\to \infty$. Our formula remains valid if instead of lattice animals we enumerate certain sub-species called interfaces. Enumerating interfaces leads to functional duality formulas that are tightly connected to percolation and are not valid for lattice animals, as well as to strict inequalities for the percolation threshold. 


Incidentally, we prove that the rate of the exponential decay of the cluster size distribution of Bernoulli percolation is a continuous function of $p\in (0,1)$.
\end{abstract}

\section{Introduction}
We improve on the best known asymptotic bounds on the exponential growth rates $a(\Z^d)$ of lattice animals as $d\to \infty$, using a probabilistic method involving percolation theory. We begin this quest by setting up the machinery in this paper, and complete it in the follow-up work \cite{ExpGrowth2}. Along the way we also obtain new results on percolation.
\subsection{Lattice animals}
A \defi{lattice animal} is a connected subgraph $S$ of the hypercubic lattice $\Z^d$, or more generally, of a vertex-transitive graph. The counts of lattice animals with prescribed parameters have been extensively studied by scholars in statistical mechanics as well as  combinatorics and computer science \cite{SiteAnimals,BareShal,DelTai,GP,HammondExpRates,Harr82, LatticeTrees,LatticeTreesTerms, PG95}, both in $\Z^d$ and  other lattices \cite{BaRoShaImp,BaShaZheImp,RanWelAni}. A lot of the motivation comes from the study of random configurations in $\Z^d$, the central theme in many models of statistical mechanics. Our focus is Bernoulli (bond or site) percolation. Given a value of the percolation parameter $p\in [0,1]$, we can express the probability $\theta(p)$ that the cluster $C_o$ of the origin $o$ is infinite as
\labtequ{eq LA}{$1-\theta(p) = \sum_{A \text{ is a lattice animal containing } o} \Pr_p(A \text{ occurs})$}
The probability $\Pr_p(A \text{ occurs})$ is easily expressed as $p^{|A|} (1-p)^{|\partial A|}$ by the definition of the model, where the \defi{boundary} $\partial A$ comprises the edges outside $A$ that are incident with $A$. Thus if we could enumerate the set of lattice animals $A_{n,m}$ with size $n$ and boundary size $m$ accurately enough, we could answer any question of percolation theory such as e.g.\ the continuity of $\theta(p)$ or the exact value of the critical threshold $p_c$. In practice, however, $A_{n,m}$ is too difficult to enumerate, and one studies asymptotics as $n,m\to \infty$. Such asymptotics, specifically the exponential growth rates, are still informative enough about the behaviour of percolation, and the present paper provides results in this vain.
\subsection{Interfaces}
Certain subfamilies of lattice animals, called interfaces, have also been extensively studied either as a tool in the study of statistical mechanics or for their own sake \cite{CerWul,DoKoShlWul}. The term \defi{interface} is commonly used to denote the common boundary of two components of a crystal or liquid that are in a different phase. The precise meaning of the term varies according to the model in question and the perspective of its study. When studying percolation on a planar lattice $L$ for example, the interface of the cluster $C_o$ of the origin can be thought of as the minimal cut separating $C_o$ from infinity, which forms a cycle in the dual $L^*$. Peierls' argument \cite{Grimmett} is a famous application of the notion that uses an upper bound on the number of such cycles to deduce an upper bound on $p_c(L)$.

In \cite{analyticity} we introduced a variant of the notion of interface, and used it to prove the analyticity of the percolation density $\theta(p)$ for supercritical Bernoulli percolation on $\Z^d$. Our proof relied on the fact that our interfaces satisfy a generalisation of \eqref{eq LA}. Therefore, as argued above for lattice animals, counting interfaces accurately enough would yield important conclusions in percolation theory. This observation is the main motivation of this paper, which studies the exponential growth rate \defi{$b(G)$} of interfaces of a `lattice' \G. Our first result is the following inequality relating $b(G)$ to ${p}_c(G)$:
\labtequ{b ge frp}{$\boxed{b(G) \geq f(r({p}_c(G)))}$,}
where $f(r):= \frac{(1+r)^{1+r}}{r^r}$ and $r(p):=\frac{1-p}{p}$ are universal  functions.  The class $\cs$ of \defi{lattices} \g we work with includes the standard cubic lattice in $\Z_d, d\geq 2$, as well as all quasi-transitive planar lattices (see \Sr{pints site-pints} for definitions).

The fact that interfaces are a sub-species of lattice animals leads to the inequality $a(G)\geq b(G)$. Inequality~\eqref{b ge frp} allows us to translate any upper bound on the percolation threshold  of a lattice \g into a lower bound on the exponential growth rate of lattice animals $a(G)$ and vice-versa. We exploit this in both directions in the follow-up paper \cite{ExpGrowth2}. We improve on the best known asymptotic lower and upper bounds on $a(\Z^d)$ as $d\to \infty$, answering a question of Barequet et al.\ \cite{SiteAnimals}. We use percolation as a tool to obtain the latter, and conversely we use the former to obtain lower bounds on $p_c(\Z^d)$. 
\subsection{Lattice animals vs.\ interfaces}

Inequality \eqref{b ge frp} remains true if we replace $b(G)$ by the exponential growth rate \defi{$a(G)$} of lattice animals, and all ingredients for its proof are available in \cite{HammondExpRates}. In this paper we provide a unified, and simpler, approach to proving such inequalities. Our definition of interface is parametrised by a choice of a basis \cp\ of the cycle space $\cc(G)$ of the lattice \G. We elaborate on this notion in Sections \ref{pints site-pints} and \ref{Growth rates}. By varying the choice of \cp\ we obtain a spectrum of notions of \cp-interface, which are always relevant to percolation: we showed in \cite[Theorem 10.4]{analyticity} (restated as \Tr{unique} below) that every finite connected subgraph of \G, e.g.\ a percolation cluster, contains a unique \cp-interface. One extreme of this spectrum is where \cp\ is just the set of all cycles of \G, in which case the \cp-interfaces coincide with the lattice animals. The other extreme is where \cp\ is a minimal basis, in which case the \cp-interface of a percolation cluster is a thin layer incident with its boundary to infinity. For example, when \g is the square lattice, and \cp\ comprises its squares of length $4$, then the (unique) \cp-interface contained in a cluster $C$ consists of the vertices and edges bounding the unbounded face $F$ of $C$ in the plane. 
Some examples of   such interfaces are depicted in \fig{figscv}. The main message of this paper is that this second extreme of  \cp-interfaces provides finer information about percolation. In particular, we will prove (\Tr{strict}) that for every basis \cp\ comprising cycles of bounded length the following holds:
\begin{theorem} \label{strict intro}
$b(G)< a(G)$ holds \fe\ $G\in \cs$. 
\end{theorem}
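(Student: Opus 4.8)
The bound $b(G)\le a(G)$ is immediate, since every $\cp$-interface is a lattice animal; and the inclusion alone yields nothing more, so the strictness must come from an \emph{upper} bound on $b(G)$ that genuinely uses that the cycles in $\cp$ have bounded length. This hypothesis cannot be dispensed with: enlarging $\cp$ towards the set of all cycles makes the $\cp$-interface of a cluster fill it out entirely and pushes $b(G)$ up to $a(G)$, so any bound forcing $b(G)<a(G)$ must deteriorate as the length bound grows. I would therefore bound the numbers $B_{n,m}$ of $\cp$-interfaces $I\ni o$ with $|I|=n$ and $|\partial I|=m$, and show that the ratios $m/n$ carrying the exponential growth of $B_n:=\sum_m B_{n,m}$ are confined to a range whose best percolation weight is strictly worse than the one realised by lattice animals.

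\textbf{Bounding $b(G)$ from above.} The starting point is the $\cp$-interface analogue of \eqref{eq LA} proved in \cite{analyticity}, whose combinatorial core is \Tr{unique}: $\sum_{n,m}B_{n,m}\,p^{n}(1-p)^{m}=1-\theta(p)\le 1$ for every $p\in(0,1)$, so $B_{n,m}\le p^{-n}(1-p)^{-m}$ and, optimising over $p$, $B_{n,m}\le f(m/n)^{n}$ --- which by itself is no better than what one has for animals. The new ingredient, where the bounded length of $\cp$ enters, is a linear isoperimetric inequality for $\cp$-interfaces: a thin layer along the boundary-to-infinity cannot enclose a bounded region of volume exceeding a fixed multiple of $|\partial I|$, so $|I|$ and $|\partial I|$ stay comparable and their ratio is kept away from the ratios attained by genuinely ``fat'' animals. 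On a planar lattice one should sharpen this using planar duality --- applying the same identity to the dual cluster enclosed by $I$, with the roles of $p$ and $1-p$ interchanged --- while on $\Z^d$ one uses that a $\cp$-interface is a $(d{-}1)$-dimensional separating surface each of whose cells abuts a cell on the infinite side, which already yields the two-sided comparison. Fed into the per-ratio bound, this produces an upper bound on $b(G)$ strictly smaller than the corresponding bound for $a(G)$, the gap being exactly the isoperimetric restriction absent for animals; in favourable cases this matches the lower bound \eqref{b ge frp}, giving $b(G)=f(r(p_c(G)))$.

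\textbf{Bounding $a(G)$ from below, and the main obstacle.} For lattice animals no isoperimetric inequality holds, so one should exhibit an exponentially rich family of animals of size $n$ whose boundary ratio lies outside the interface range --- e.g.\ animals enclosing a region of volume $\gg n$ that is then filled in almost freely --- with growth rate exceeding the bound just obtained for $b(G)$; this is in essence the classical fact that lattice animals strictly outnumber the ``thin'' ones, and the quantitative form required is of the type underlying the animal version of \eqref{b ge frp} in \cite{HammondExpRates}. The main obstacle is the upper bound on $b(G)$: turning the soft ``thin layer'' picture of a $\cp$-interface into a genuine linear isoperimetric inequality with constants explicit enough to beat the animal growth rate, uniformly over the class $\cs$ and over all admissible bases $\cp$. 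This self-consistency between the interface enumeration and percolation is precisely the functional-duality phenomenon that has no analogue for lattice animals, whereas the lower bound on $a(G)$ is comparatively routine.
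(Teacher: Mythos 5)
Your strategy does not go through, and it is quite different from the one the paper uses. The core of your plan is to confine the surface-to-volume ratio $r=|\partial P|/|P|$ of interfaces to a strict subrange of the ratios attained by animals (via a ``thin layer'' isoperimetric inequality, which does hold: every edge of $P$ lies within bounded distance of $\partial P$, so $r$ is bounded away from $0$ for interfaces), and then to beat the resulting upper bound on $b(G)$ by animals whose ratio lies outside that range. But this restricts $r$ from \emph{below}, while the universal upper bound $f(r)=\frac{(1+r)^{1+r}}{r^r}$ is strictly \emph{increasing} in $r$ (its logarithmic derivative is $\log\frac{1+r}{r}>0$), and both interfaces and animals realise ratios all the way up to roughly the maximum degree (tree-like configurations). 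So $\sup_r f(r)$ over the interface range equals the sup over the animal range, and no gap is produced. Worse, the animals you propose to use for the lower bound --- ``fat'' ones with small $|\partial A|/|A|$ --- themselves satisfy $a_r\le f(r)$ with $r$ small, hence have the \emph{smallest} admissible growth rate, not one exceeding the interface bound. So the two halves of your argument point in opposite directions, and the ``main obstacle'' you identify is not the real one.

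The paper's proof (Theorem \ref{strict}) is a direct counting argument in the spirit of Kesten's pattern theorem, and the isoperimetric profile plays no role. Given a site-interface $P$ of size $n$, pick a maximal $(2l+1)$-separated set $S\subseteq P$ (where $l$ bounds the lengths of cycles in $\cp$), so $|S|\ge n/d^{4l+2}$. Condition \ref{pint b} of Definition \ref{pint} guarantees that for each $v\in S$ the union $C_v$ of basic cycles through $v$ is \emph{not} entirely contained in $P$; hence for each $T\subseteq S$ with $|T|=\lfloor\varepsilon n\rfloor$ the decorated animal $P(T)=P\cup\bigcup_{v\in T}C_v$ records $T$ and the $P(T)$ are pairwise distinct connected subgraphs. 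This yields $\binom{|S|}{\lfloor\varepsilon n\rfloor}$ animals per interface, while each animal arises from at most $2^{d^{3l}\lfloor\varepsilon n\rfloor}$ pairs $(P',T')$ because $P'$ can differ from $P$ only near $T$. Choosing $\varepsilon$ small enough makes the gain $\bigl(\frac{1}{\varepsilon C}\bigr)^{\varepsilon n}$ beat the multiplicity, giving $\dot a\ge 2^{c\varepsilon}\,\dot b$ for some $c>0$ and hence the strict inequality. The essential idea your proposal is missing is exactly this local-perturbation mechanism: strictness comes from the fact that an interface is forced to \emph{omit} something near every vertex, so it can be locally completed in exponentially many distinguishable ways, not from a global constraint on its surface-to-volume ratio.
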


We remark that using this theorem and inequality~\eqref{b ge frp} we obtain the strict inequality $${a(G) > f(r({p}_c(G)))}.$$

\subsection{Growth rates parametrized by `volume-to-surface\\ ratio'}

Before stating our other results we need to introduce more terminology. Recall that when considering counts $A_{n,m}$ of lattice animals  it was important to parametrize them both by their size $n$ and their boundary size $m$. Alternatively, instead of $m$ we could use the `volume-to-surface ratio' $n/m$.
Similarly, to establish \eqref{b ge frp}, we consider the exponential growth rate $b_r=b_r(G)$ of the number of
{interfaces} of $G$ with size $n$ and volume-to-surface ratio approximating $r$, as a function of $r\in \R_+$. By `volume' here we mean the number of edges contained in an interface $P$, and by `surface' we mean the cardinality of a set $\partial P$ of edges that are incident with $P$ and are `accessible' from infinity. (See \Dr{pint} for details.) In the example of \fig{figscv}, the edges in $\partial P$ are depicted by dashed lines.

We consider this function $b_r(G)$ to be of independent interest; in fact, most of this paper revolves around it. In particular, we prove that $b_r(G)$ is always continuous (\Tr{cont}) and log-concave (\Tr{concave}).

\medskip

One of the best known results of percolation theory is the exponential decay, as $n\to \infty$, of the cluster size distribution $\Pr_p(|C_o|= n)$ for $p$ in the subcritical interval $[0,p_c)$ \cite{AB}. In the supercritical case $p\in (p_c,1)$ this exponential decay holds for some, but not all, lattices and values of $p$ \cite{AiDeSoLow, HerHutSup}. 

Incidentally, we prove in the Appendix that the rate of the exponential decay of $\Pr_p(|C_o|= n)$, defined as $c(p):= \lim_n \left(\Pr_p(|C_o|= n)\right)^{1/n}$, is a continuous function of $p$ (\Tr{cp cont}). Our proof boils down to elementary calculations not involving our notion of interface. 
Another contribution of this paper is the subexponential decay at $1-p_c$ for triangulated lattices in $\R^d$  (\Cr{cor no ed}). 

Letting $S_o \subseteq C_o$ denote the interface of $C_o$, we can analogously ask for which $p\in (0,1)$ we have  exponential decay of the probability $\Pr_p(|S_o|= n)$. We prove that 
this is uniquely determined by the value $b_{r(p)}$, where $r(p):=\frac{1-p}{p}$ is a bijection between the parameter spaces of edge density $p$ and volume-to-surface ratio $r$.  More concretely, we observe that, firstly, $b_{r(p)}(G) \leq f(r(p))$ holds \fe\ lattice \g and every $p\in (0,1)$, where $f(r)$ is the aforementioned universal function (\Prr{br ineq}), and secondly, $\Pr_p(|S_o|= n)$ decays exponentially in $n$ for exactly those values of $p$ for which this inequality is strict:
\begin{theorem}\label{S_o}
Let $G\in \mathcal{S}$. Then for every $p\in (0,1)$, the interface size distribution $\Pr_p( |S_o|=n )$ fails to decay exponentially in $n$ if and only if $$b_{r(p)}(G)= f(r(p)).$$ 
\end{theorem}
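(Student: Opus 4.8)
The plan is to connect the interface size distribution $\Pr_p(|S_o|=n)$ to the counting function $b_{r(p)}(G)$ via the generalisation of \eqref{eq LA} to \cp-interfaces, established in \cite{analyticity}. First I would write down, for each $n$, an exact expression for $\Pr_p(|S_o|=n)$ as a sum over interfaces $P$ of size $n$ of the probability that $P$ is exactly the interface of $C_o$. Using \Tr{unique} (that every finite connected subgraph contains a \emph{unique} \cp-interface), this probability is controlled on both sides by $p^{|P|}(1-p)^{|\partial P|}$: an upper bound is immediate since the event "$P\subseteq C_o$ and the edges of $\partial P$ are closed" is necessary, and a matching exponential lower bound comes from the standard trick of fixing a bounded-size witness structure around $P$ that forces $P$ to be the interface. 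Summing over $P$ with $|P|=n$ and grouping interfaces by their volume-to-surface ratio $|P|/|\partial P|$, the sum is, up to subexponential factors, $\sum_{m} (\#\{P : |P|=n, |\partial P|=m\})\, p^n (1-p)^m$, and by definition of $b_r$ the exponential rate of this is $\sup_r b_r(G)\, p \,(1-p)^{1/r} = \sup_r b_r(G)\, p^{1+1/r}(1-p)^{1/r}/p^{1/r}$; a brief optimisation over $r$, using that $f(r)=\frac{(1+r)^{1+r}}{r^r}$ is exactly the Legendre-type dual making $f(r)\,p\,(1-p)/\!\cdots$ compare correctly, shows the exponential rate of $\Pr_p(|S_o|=n)$ equals $\sup_r \big( b_r(G)/f(r)\big)^{\text{(weighted)}}$ and that the contribution from $r=r(p)$ is governed precisely by the ratio $b_{r(p)}(G)/f(r(p))$.

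The second ingredient is \Prr{br ineq}, which gives $b_r(G)\le f(r)$ for all $r$; this guarantees the rate above is $\le 1$, i.e.\ $\Pr_p(|S_o|=n)$ never decays superexponentially and its rate $c_S(p):=\lim_n \Pr_p(|S_o|=n)^{1/n}$ is well-defined and at most $1$. The key point is then the sharp claim: $c_S(p)=1$ (no exponential decay) if and only if $b_{r(p)}(G)=f(r(p))$. The "if" direction I would get by plugging $r=r(p)$ into the lower bound for $\Pr_p(|S_o|=n)$: since $p^{1+1/r(p)}(1-p)^{1/r(p)}/p^{1/r(p)}$ is arranged so that $f(r(p))$ times this weight equals $1$ (this is the content of the identity $f(r(p))\, p\,(1-p)^{1/r(p)} p^{-1/r(p)}\cdot(\text{normalisation})=1$, which should be checked by direct substitution of $r(p)=(1-p)/p$), equality $b_{r(p)}=f(r(p))$ forces the $n$-th root of the relevant term to tend to $1$. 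The "only if" direction requires showing that if $b_{r(p)}(G)<f(r(p))$ strictly, then the \emph{entire} sum over all $r$ has rate strictly below $1$; here I would use continuity of $b_r(G)$ in $r$ (\Tr{cont}) together with the strict inequality $b_r(G)\le f(r)$ near $r(p)$ and a compactness/tail argument to bound the ranges of $r$ away from $r(p)$, ensuring no other value of $r$ pushes the rate back up to $1$. Log-concavity of $b_r$ (\Tr{concave}) may be invoked to streamline this optimisation, since it makes $r\mapsto b_r(G)$ (and hence the weighted objective) unimodal, so the supremum is either attained at $r(p)$ or strictly exceeded nowhere relevant.

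The main obstacle I anticipate is the "only if" direction — specifically, controlling the contribution to $\Pr_p(|S_o|=n)$ of interfaces whose volume-to-surface ratio is \emph{bounded away} from $r(p)$, and of interfaces of size $n$ but with $m=|\partial P|$ in the extreme regimes where $b_r$ is not a priori well-behaved (e.g.\ $r\to 0$ or $r\to\infty$). One needs a uniform-in-$r$ upper bound of the form $\#\{P:|P|=n,|\partial P|=m\}\le e^{o(n)} f(m/n)^n$ and then to verify that $\sup_r \{b_r \text{-weighted term}\}$ is attained in a compact range and is continuous, so that strict inequality at $r(p)$ propagates to a strict inequality for the whole sum. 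This is where the continuity (\Tr{cont}) and the universal bound (\Prr{br ineq}) do the real work; the rest is bookkeeping with the explicit function $f$ and the substitution $r(p)=(1-p)/p$.
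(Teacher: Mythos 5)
Your reduction of $\Pr_p(|S_o|=n)$ to the interface-counting sum contains a genuine gap at the very first step. The identity $\Pr_p(|S_o|=n)=\sum_{|P|=n}\Pr_p(S_o=P)$ and the upper bound $\Pr_p(S_o=P)\le\Pr_p(P\text{ occurs})$ are fine, but the claimed ``matching exponential lower bound'' via a bounded-size witness is not available. By \Tr{unique}, an occurring interface $P$ meets a unique cluster, but that cluster need not be $C_o$; the event $\{S_o=P\}$ additionally requires an open path joining $o$ to $P$ inside the finite region enclosed by $\partial P$. The length of such a path is not bounded: for a quasi-transitive planar lattice an interface of size $n$ surrounding $o$ can lie at distance $\Theta(n)$ from $o$ (a circuit of length $n$ centred at $o$), so the cheapest witness costs $p^{\Theta(n)}$ and your lower bound loses an exponential factor --- fatal in a statement that is entirely about exponential rates. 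The paper's proof of \Tr{S_o II} supplies exactly the missing idea: each occurring interface is translated by a multiple of the period $tv_1$ of a periodic quasi-geodesic $X$ so that its first meeting point with $X^+$ lands in a window of bounded size $M$ around $o$; the connection then costs only the constant factor $p^M$, and the translation has only polynomial multiplicity ($\le ln+1$ choices for the anchoring vertex), yielding $p^M\,\Ex_p(N_n)\le (ln+1)\Pr_p(|S_o|=n)$ and hence equality of the two exponential rates.

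The second half of your plan (the ``only if'' direction) is also not carried out, and the proposed route is heavier than the paper's. You would need to interchange $\sup_r$ with $\limsup_n$ and control the contribution of interfaces with $|\partial P|/|P|$ bounded away from $r(p)$; invoking \Tr{cont} and \Tr{concave} for this is possible in principle (upper semicontinuity plus \Prr{br eq} would suffice to bound the supremum away from $1$ off a neighbourhood of $r(p)$), but those results live in a later section and none of this is actually done in your sketch. The paper avoids all regularity of $b_r$ by the tilting argument of \Lr{large deviation}: comparing $g(p,r)=p(1-p)^r$ with $g(s,r)$ for $s=\frac{1}{1+r(p)+\epsilon}$ and $S=\frac{1}{1+r(p)-\epsilon}$ gives $\Ex_p(N_n-N_{n,r(p),\epsilon})\le c^n\big(\Ex_s(N_n)+\Ex_S(N_n)\big)\le c^n(2ln+2)$, so for every $p$ the interfaces whose ratio deviates from $r(p)$ contribute exponentially little, and the theorem reduces to \Prr{no exp dec}, whose proof is then immediate from the definitions. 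You should adopt both the translation step and this large-deviation step, or find genuine substitutes for them.
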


We expect our results to hold for all vertex-transitive 1-ended graphs (this is so for \Tr{S_o}), but decided to restrict our attention to $\cs$ to avoid technicalities that would add little to the understanding of the matter.


It is interesting that \Tr{S_o} holds for every choice of basis \cp\ \wrt\ which our interfaces are defined. In particular, since lattice animals are a special case of \cp-interfaces as mentioned above, we can replace the interface size distribution $\Pr_p( |S_o|=n )$ by the cluster size distribution $\Pr_p( |C_o|=n )$ and $b_{r(p)}(G)$ by its analogue $a_{r(p)}(G)$ counting lattice animals. This form of \Tr{S_o} was proved by Hammond \cite{HammondExpRates} building on a result of Delyon \cite{DelTai}. 

Our proof of \Tr{S_o} is based on a large deviation principle for interfaces (\Lr{large deviation}) that may be of independent interest. Roughly speaking, the latter result says that for any $p\in (0,1)$, most occurying \pint s have a volume-to-surface ratio close to the value $r(p)$.

\medskip

We mentioned above that the most refined extreme of such results is obtained when \cp\ is a minimal basis. This is even more so for `triangulated' lattices, i.e.\ lattices having a basis \cp\ consisting of triangles. For such a lattice, we show that 

\begin{theorem} \label{dual br}
We have $b_r= (b_{1/r})^r$ for every $r>0$.
\end{theorem}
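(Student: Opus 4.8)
The plan is to derive the formula from a \emph{volume--surface duality} for the interfaces of a triangulated lattice, and triangulation will enter precisely because it is the one case in which this duality is an exact symmetry rather than a one-sided estimate. Write $I_{n,m}=I_{n,m}(G)$ for the number of $\mathcal{P}$-interfaces $P$ of $G$, anchored at a fixed edge so that the count is finite, with $|E(P)|=n$ and $|\partial P|=m$. Unwinding the definition of $b_r$ from \Sr{Growth rates}, the claim $b_r=(b_{1/r})^r$ is equivalent to the assertion that, as $n,m\to\infty$ with $n/m\to r$,
\[
I_{n,m}=I_{m,n}^{\,1+o(1)},
\]
i.e.\ that exchanging the number of edges of an interface with the number of its boundary edges does not change the exponential growth rate. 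So the task is to exhibit a map on interfaces realising this exchange.

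Draw $G$ on the sphere $\mathbb{S}^{2}$ by adjoining the point $\infty$. A $\mathcal{P}$-interface $P$ together with $\partial P$ then occupies a collar separating the disc containing the enclosed cluster from the disc containing $\infty$, the edges of $P$ being the edges of the collar met from the cluster side and the edges of $\partial P$ those met from the $\infty$-side (this is where accessibility from infinity enters). Every face of this collar is a triangle of $\mathcal{P}$, and within a triangle the three edges play interchangeable roles; swapping, face by face within the collar, the edges lying on the cluster side with those lying on the $\infty$-side therefore carries $P$ to another subgraph $P^{\dagger}$ which is again a thin layer exposed to infinity---hence a $\mathcal{P}$-interface---with $|E(P^{\dagger})|=|\partial P|=m$ and $|\partial P^{\dagger}|=|E(P)|=n$, up to an error bounded by the number of vertices at which the collar turns. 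For a basis whose cycles are longer than triangles this per-face swap no longer balances, and one obtains only a one-sided bound such as $I_{n,m}\le I_{m,n}^{1+o(1)}$, consistent with \Tr{strict intro}, which says that for such bases $b(G)<a(G)$ strictly rather than with equality.

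The step I expect to be the real obstacle is checking that $P^{\dagger}$ is a bona fide $\mathcal{P}$-interface in the sense of \Dr{pint} and that the defects are sub-exponential. Interfaces are not simple cycles: they may pinch to a single vertex, carry chords, or run along the same edge from both sides, and $\partial P$ is singled out by the non-local requirement of accessibility from $\infty$. One must show that, under the collar swap, this requirement is exchanged with the analogous condition on the inner side, so that $\partial P^{\dagger}$ really is the accessible-from-infinity boundary of $P^{\dagger}$ and not merely the set of edges incident with it; and one must bound, uniformly over all interfaces of volume $n$ and surface $m$, the discrepancies $\bigl||E(P^{\dagger})|-m\bigr|$ and $\bigl||\partial P^{\dagger}|-n\bigr|$ produced by pinch points and turning vertices. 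I would carry this out through the encoding of an interface by the set of triangles of $\mathcal{P}$ it encloses (recalled in \Sr{pints site-pints}): on that encoding the collar swap becomes complementation within the collar, and the various defects are controlled by the number of connected components of $\partial P$, which is in turn small for all but sub-exponentially many interfaces by the large-deviation estimate \Lr{large deviation}---and equality of exponential rates is all we need.

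Finally, once $I_{n,m}=I_{m,n}^{1+o(1)}$ is in place, the identity follows by taking logarithms along sequences with $n/m\to r$ and matching the two sides with the definitions of $b_r$ and $b_{1/r}$. The feasible interval of ratios (outside which $I_{n,m}=0$ for large $n,m$) is symmetric under $r\mapsto 1/r$ by the same duality, and the boundary values are supplied by the continuity of $b_r$ (\Tr{cont}); hence $b_r=(b_{1/r})^r$ for every $r>0$.
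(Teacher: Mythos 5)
The central step of your argument---that swapping the roles of $P$ and $\partial P$ within the collar yields another $\cp$-interface $P^{\dagger}$---fails, and for a structural reason rather than because of pinch points or turning vertices. In \Dr{pint} the boundary of an \pint\ is singled out by accessibility from \emph{infinity}: $\partial P$ is the layer on the outer side of $P$. After the swap, the candidate boundary of $P^{\dagger}=\partial P$ is $P$ itself, which lies on the \emph{inner} side, towards $o$; the genuine \pint\ whose volume is $\partial P$ would have as its boundary the next layer further out, a set whose cardinality has nothing to do with $|P|$. (Concretely, on the planar triangular lattice the site-\pint\ of the single-vertex cluster $\{o\}$ has $\partial P$ a hexagon of $6$ vertices; the site-\pint\ whose volume is that hexagon has as boundary the $12$ vertices of the next ring, not the single vertex $o$.) The swapped pair is exactly what the paper calls an \ipint, and the entire content of the theorem is the non-trivial comparison between the growth rates of \ipint s and of \pint s. Your proposed repairs do not reach this: \Lr{large deviation} is a probabilistic statement about occurring \mpint s under $\Pr_p$, weighted by $p^{|P|}(1-p)^{|\partial P|}$, and gives no uniform (or even typical) combinatorial control over all \pint s with prescribed $(n,m)$; and the number of components of $\partial P$ is not the obstruction---for triangulated lattices $\partial P$ is in fact connected.

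The paper's proof accepts that the swap only lands in the class of \ipint s, records the resulting trivial identity $b^*_r=b_{1/r}^r$, and then proves the one-sided inequality $b^*_r\ge b_r$ by a genuine construction: first one shows (\Prr{o box}, \Lr{o box site}) that \pint s whose bounding box has size $o(n)$ already achieve the full growth rate $b_r$---itself a non-trivial grid-gluing argument---and then one surrounds such a $P$ by a cycle $C$ of length $o(n)$, joins it to $P$ by a short path, and takes the \ipint\ of the enclosed region, obtaining an \ipint\ with essentially the same volume and surface as $P$ via a map that is at most subexponentially many-to-one. Chaining $b^*_r\ge b_r=(b^*_{1/r})^r\ge b_{1/r}^r=b^*_r$ then forces equality throughout. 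None of this machinery is present in your sketch, and the sphere/collar picture you rely on is moreover unavailable for $\mathbb{T}^d$ with $d>2$, which \Tr{dual site} covers. So the proposal as written has a genuine gap at its main step.
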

In other words, the values of $b_r$ for $r<1$ determine those for $r>1$ (\Trs{dual site} and \ref{dual bond}). This is the technically most involved result of this paper. It shows that considering interfaces rather than lattice animals yields a more interesting function $b_r$, namely one with a smaller intersection with $f(r)$.
Most of our knowledge about $b_r$ is summarized in \fig{b_r vs f(r)}.


Amusingly, our universal function $f(r)$ also satisfies the equation of \Tr{dual br}, i.e.\ $f(r)= f(1/r)^r$.

\subsection{Improvements for non-amenable graphs}

A well-known theorem of Benjamini \& Schramm \cite{BeSchrPer} states that $p_c(G)\leq\frac{1}{h(G)+1}$, where $h(G)$ denotes the Cheeger constant. We show that this inequality is in fact strict, i.e.\ $$p_c(G)<\frac{1}{h(G)+1},$$ when $G$ is $1$-ended, has bounded degrees, and its cycle space admits a basis consisting of cycles of bounded length (\Tr{strict BS}). Moreover, in \Sr{analogue} we define a variant $I(G)$ of the Cheeger constant by considering interfaces rather than arbitrary finite subgraphs of $G$. We obtain the strengthening $p_c(G)\leq\frac{1}{I(G)+1}$ of the aforementioned theorem (\Tr{cheeger-like}), which again has a site and a bond version. We remark that, unlike $h(G)$, our $I(G)$ can be positive even for amenable graphs. When \g is the planar square lattice for example, it is not hard to see that $I(G)=1/2$ in the bond case, which yields the Peierls bound $p_c\leq 2/3$. Moreover, one can  have $I(G)>h(G)$ even in the non-amenable case: this turns out to be the case for regular triangulations and quadrangulations of the hyperbolic plane as proved in a companion paper \cite{SitePercoPlane}.

\begin{figure}[!ht]
\begin{framed}
\vspace*{5mm}
\centering
\noindent
\begin{overpic}[width=.6\linewidth]{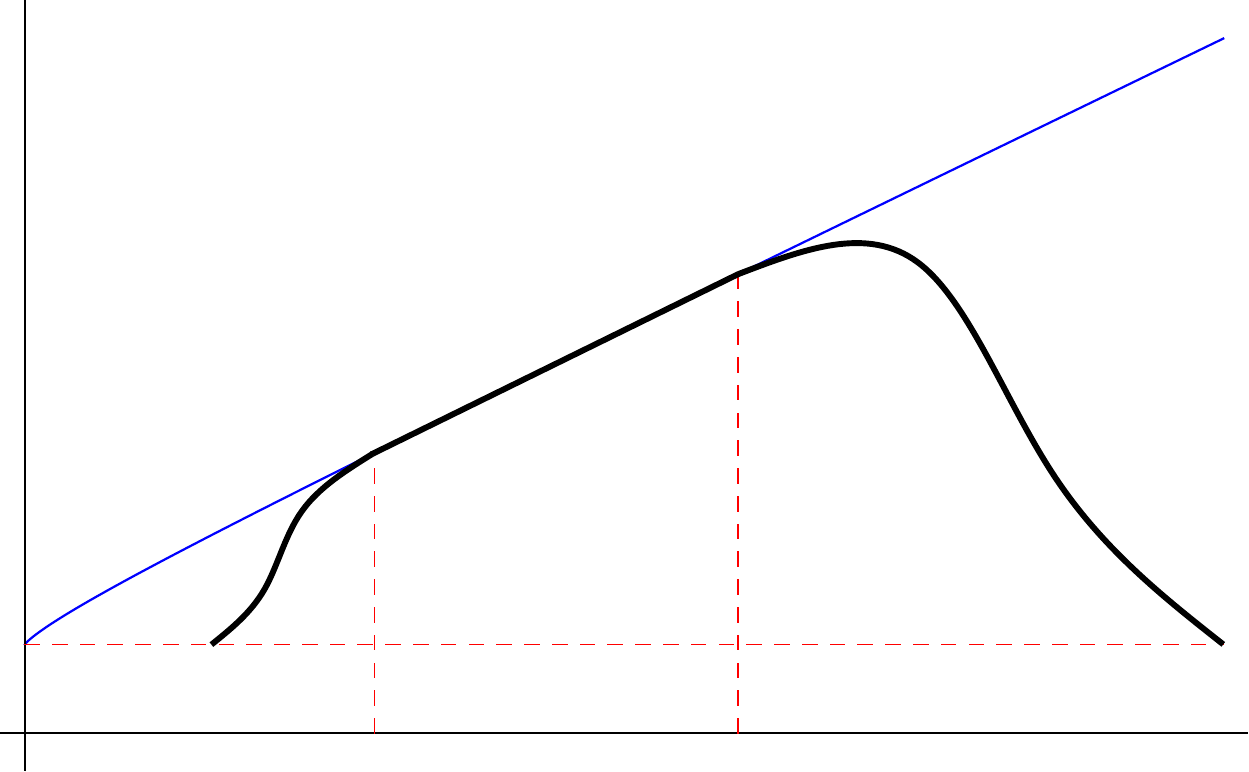}
\put(94,18){$b_r$}
\put(98,-1){$r$}
\put(54,-1){$r(p_c)$}
\put(21,-1){$r(1-p_c)$}
\put(94,48){$f(r)$}
\end{overpic}
\begin{minipage}[c]{0,95\textwidth}
\vspace*{8mm}
\caption{ \small An approximate, conjectural, visualisation of $b_r(G)$ when $G$ is a lattice in $\R^d, d\geq 3$. The graph of $b_r(G)$ (depicted in bold ink) lies below the graph of $f(r):= \frac{(1+r)^{1+r}}{r^r}$ (depicted in blue, if colour is shown). The fact that $f(r)$ plots 
almost like a straight line can be seen by rewriting it as $(1+r)(1+1/r)^r$. The fact that $b_r= f(r)$ for $r$ in the interval $(r(1-p_c), r(p_c)]$, where $r(p):= \frac{1-p}{p}$, follows by combining a theorem of Kesten \& Zhang \cite{KeZhaPro}, saying that exponential decay of $\Pr_p(|S_o|= n)$ fails in that interval, with our \Tr{S_o}. That $b_r< f(r)$ for $r>r(p_c)$ follows from the well-known exponential decay of $\Pr_p(|C_o|= n)$ for $p<p_c$ \cite{AB}. \vspace*{2mm} \\
We also know that $b_r$ is continuous and log-concave. The continuity of $b_r$, combined with \Tr{S_o} again, implies failure of the exponential decay at $p=1-p_c$ (\Cr{cor no ed}), which was not obtained in \cite{KeZhaPro}. \vspace*{2mm} \\ 
If the cycle space of $G$ is generated by its triangles, then \Tr{dual br} determines the subcritical branch $r>r(p_c)$ given the branch $r< r(1-p_c)$ and vice-versa. For the planar triangular lattice the picture degenerates as $p_c=1-p_c=1/2$, and so $b_r= f(r)$ for $r=r(1/2)=1$ only. \vspace*{2mm} \\
Note that $b_r(G)$ is an invariant of \g defined without reference to any random experiment. The connection to percolation is established by \Tr{S_o} via the above transformation $r(p)$. \vspace*{2mm} \\
Since $r(p)$ is monotone decreasing in $p$, the right hand side of \fig{b_r vs f(r)} corresponds to the subcritical percolation regime, and the left hand side to the supercritical. Using the transformation $r\to \frac1{r}$ (from volume-to-surface into surface-to-volume ratio) we could reverse the picture to have the `subcritical' interval on the left. For `triangulated' lattices the picture would look exactly the same due to \Tr{dual br}, only the positions of $r(p_c)$ and $r(1-p_c)$ would be interchanged.}

\label{b_r vs f(r)}
\end{minipage}
\end{framed}

\end{figure}


\subsection{Structure}
This paper is structured as follows. \Sr{SecDef} contains some standard definitions and results that will be used in several places later on. \Sr{pints site-pints} recalls the notion of an interface and some of its properties obtained in \cite{analyticity}. \Sr{Growth rates} introduces $b_r$ and presents the proof of the inequality $b_r\leq f(r)$ and \Tr{S_o}. Sections~\ref{Duality} and \ref{cont dec} contain a proof of \Tr{dual br} and the fact that $b_r$ is well-behaved, namely continuous and log-concave. The last two sections are devoted to the new bounds on $p_c$ for non-amenable graphs and to the proof of \Tr{strict intro}. The Appendix contains a proof that the rate of the exponential decay of the cluster size distribution of Bernoulli percolation is a continuous function of $p\in (0,1)$. An analogous result is proved for interfaces as well.

\section{Definitions and preliminaries}\label{SecDef}

\subsection{Percolation}

We recall some standard definitions of percolation theory. For more details the reader can consult e.g.\ \cite{Grimmett,LyonsBook}. 

Let $G = (V,E)$ be a countably infinite graph, and let $\Omega:= \{0,1\}^E$ be the set of \defi{percolation instances} on \G. We say that an edge $e$ is \defi{vacant} (respectively, \defi{occupied}) in a percolation instance $\oo\in \Omega$, if $\oo(e)=0$ (resp.\ $\oo(e)=1$).

By Bernoulli, bond \defi{percolation} on $G$ with parameter $p\in [0,1]$ we mean the random subgraph of $G$ obtained by keeping each edge with probability $p$ and deleting it with probability $1-p$, with these decisions being independent of each other.

The \defi{percolation threshold} $p_c(G)$ is defined by 
$$p_c(G):= \sup\{p \mid \Pr_p(|C_o|=\infty)=0\},$$
where the \defi{cluster} $C_o$ of $o\in V$ is the component of $o$ in the subgraph of \g spanned by the occupied edges. It is well-known that $p_c(G)$ does not depend on the choice of $o$.

To define \defi{site percolation} we repeat the same definitions, except that we now let $\Omega:= \{0,1\}^V$, and let $C_o$ be the component of $o$ in the subgraph of \g induced by the occupied vertices. The percolation threshold for site percolation is denoted $\pcs$.

In this paper the graphs $G$ we consider are all countably infinite, connected and every vertex has finite degree. Some of our results will need assumptions on $G$ like (quasi-)vertex-transitivity or planarity, but these will be explicitly stated as needed.

\subsection{Graph theoretic definitions} \label{graph defs}

Let $G = (V,E)$ be a graph. An \defi{induced} subgraph $H$ of $G$ is a subgraph that contains all edges $xy$ of $G$ with $x,y\in V(H)$. Note that $H$ is uniquely determined by its vertex set. The subgraph of $G$ \defi{spanned} by a vertex set $S\subseteq V(G)$ is the induced subgraph of $G$ with vertex set $S$. The vertex set of a graph \g will be denoted by $V(G)$, and its edge set by $E(G)$.

The \defi{edge space} of a graph \g is the direct sum $\ce(G):= \bigoplus_{e\in E(G)} \Z_2$, where $\Z_2=\{0,1\}$ is the field of two elements, which we consider as a vector space over $\Z_2$. 
The \defi{cycle space} $\cc(G)$ of \g is the subspace of $\ce(G)$ spanned by the \defi{circuits} of cycles, where a circuit is an element $C\in \ce(G)$ whose non-zero coordinates $\{e\in E(G) \mid C_e=1\}$ coincide with the edge-set of a cycle of \G. 

A \defi{planar graph} $G$ is a graph that can be embedded in the plane $\R^2$, i.e.\ it can be drawn in such a way that no edges cross each other. Such an embedding is called a \defi{planar embedding} of the graph. A \defi{plane graph} is a (planar) graph endowed with a fixed planar embedding.

A plane graph divides the plane into regions called \defi{faces}. Using the faces of a plane graph $G$ we define its \defi{dual graph} $G^*$ as follows. The vertices of $G^*$ are the faces of $G$, and we connect two vertices of $G^*$ with an edge whenever the corresponding faces of $G$ share an edge. Thus there is a bijection $e\mapsto e^*$ from $E(G)$ to $E(G^*)$.

Given a subgraph $H$ of a graph $G$ and a positive integer $k$, we define the \defi{k-neighbourhood} of $H$ to be the set of vertices at distance at most $k$ from $H$.

\subsection{Partitions}

A \defi{partition} of a positive integer $n$ is a multiset $\{m_1,m_2,\ldots,m_k\}$ of positive integers such that $m_1+m_2+\ldots+m_k=n$. Let $p(n)$ denote the number of partitions of $n$. An asymptotic expression for $p(n)$ was given by Hardy \& Ramanujan in their famous paper \cite{HarRam}. An elementary proof of this formula up to a multiplicative constant was given by Erd\H os \cite{ErdHR}. As customary we use $A\sim B$ to denote the relation $A/B\rightarrow 1$ as $n\rightarrow \infty$.

\begin{theorem}[Hardy-Ramanujan formula] \label{HRthm}
The number $p(n)$ of partitions of $n$ satisfies
$$p(n)\sim \dfrac{1}{4n\sqrt{3}}\exp\Big(\pi \sqrt{\dfrac{2n}{3}}\Big).$$
\end{theorem}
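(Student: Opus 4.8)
Since the statement is the classical Hardy--Ramanujan asymptotic, what I would present is a proof via the generating function together with a saddle-point (circle-method) estimate; this is the route that delivers the sharp constant $\tfrac{1}{4\sqrt3\,n}$, as opposed to Erd\H os's elementary recursion argument, which only yields the formula up to an undetermined multiplicative constant. First I would recall Euler's identity $F(x):=\sum_{n\ge0}p(n)x^n=\prod_{k\ge1}(1-x^k)^{-1}$ for $|x|<1$, which is immediate on expanding each factor geometrically and identifying the coefficient of $x^n$ with the number of multisets of positive integers summing to $n$.

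Next I would determine the behaviour of $F$ near its dominant singularity $x=1$. Setting $x=e^{-t}$ with $t\to0^+$, one has $\log F(e^{-t})=\sum_{k,m\ge1}\tfrac{e^{-kmt}}{m}=\sum_{m\ge1}\tfrac1{m(e^{mt}-1)}$, whose Mellin transform in $t$ is $\Gamma(s)\zeta(s)\zeta(s+1)$. Shifting the Mellin inversion contour past the simple pole at $s=1$ and the double pole at $s=0$ (arising from $\Gamma(s)$ and $\zeta(s+1)$ together) gives $\log F(e^{-t})=\tfrac{\pi^2}{6t}+\tfrac12\log\tfrac{t}{2\pi}+o(1)$, i.e.\ $F(e^{-t})\sim\sqrt{\tfrac{t}{2\pi}}\,\exp\!\big(\tfrac{\pi^2}{6t}\big)$ (equivalently, this is the modular transformation law for the Dedekind $\eta$-function).

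Then I would apply Cauchy's formula: $p(n)=\tfrac1{2\pi}\int_{-\pi}^{\pi}F(e^{-t+i\phi})\,e^{n(t-i\phi)}\,d\phi$ for any $t>0$, and choose $t=t_n:=\pi/\sqrt{6n}$. This is exactly the radius minimising $\exp(\tfrac{\pi^2}{6t}+nt)$, and it makes the first-order term of the Taylor expansion in $\phi$ of the exponent $\tfrac{\pi^2}{6(t-i\phi)}+n(t-i\phi)$ vanish, the relevant identity being $\tfrac{\pi^2}{6t_n^{2}}=n$. On a ``major arc'' $|\phi|\le n^{-3/5}$, substituting the asymptotics of the previous step and Taylor-expanding the exponent to second order (the $\phi^2$-coefficient being $-\tfrac{\pi^2}{6t_n^{3}}=-\sqrt6\,n^{3/2}/\pi$) reduces the integral to a Gaussian; multiplying together the prefactor $\tfrac1{2\pi}$, the factor $\sqrt{t_n/2\pi}$, the exponential $e^{\pi\sqrt{2n/3}}$, and the Gaussian normalisation $\sqrt{\pi/(\sqrt6\,n^{3/2}/\pi)}=\pi/(6^{1/4}n^{3/4})$ yields precisely $\tfrac1{4\sqrt3\,n}\exp(\pi\sqrt{2n/3})$ after the elementary simplification $2^{3/2}6^{1/2}=4\sqrt3$. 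On the complementary ``minor arc'' $|\phi|>n^{-3/5}$ one needs $|F(e^{-t_n+i\phi})|$ to be smaller than $F(e^{-t_n})$ by a superpolynomial factor, so that its contribution is absorbed into the error term.

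The main obstacle is precisely this minor-arc estimate, hand in hand with the uniformity of the near-$1$ expansion across the major arc: one must show that $\prod_k(1-x^k)^{-1}$ is genuinely governed by the single singularity at $x=1$, the singularities at other roots of unity being strictly weaker. In the full Hardy--Ramanujan--Rademacher analysis these secondary singularities, extracted by Farey dissection, are what produce the exact convergent series for $p(n)$; for the leading-order asymptotics stated here it suffices to bound $\mathrm{Re}\big(\log F(e^{-t_n})-\log F(e^{-t_n+i\phi})\big)$ from below by a positive quantity for $|\phi|$ bounded away from $0$, which can be done by elementary estimates on $\sum_{m\ge1}\tfrac1m\,\mathrm{Re}\,\tfrac1{e^{m(t_n-i\phi)}-1}$. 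If one is content with the weaker conclusion $p(n)=\exp\big((1+o(1))\pi\sqrt{2n/3}\big)$, then even the crude two-sided bounds ($F(e^{-t})e^{nt}$ above, a single term below) suffice and no complex analysis is needed; it is obtaining the polynomial prefactor and the exact constant $\tfrac1{4\sqrt3}$ that forces the saddle-point machinery (or Erd\H os's more delicate purely real-variable route).
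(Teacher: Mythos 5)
The paper does not actually prove this statement: it quotes the Hardy--Ramanujan formula as a classical result with a citation to the original paper, mentions Erd\H os's elementary proof of the weaker version with an undetermined constant, and explicitly remarks that only a sub-exponential upper bound on $p(n)$ is needed for its purposes. Your saddle-point outline is the standard analytic route to the sharp asymptotic, and its skeleton is sound: Euler's product, the Mellin-transform expansion $\log F(e^{-t})=\tfrac{\pi^2}{6t}+\tfrac12\log\tfrac{t}{2\pi}+o(1)$, the saddle $t_n=\pi/\sqrt{6n}$, and the Gaussian evaluation are all correct, and the constants do combine to $\tfrac{1}{4\sqrt3\,n}$ (I checked: $\tfrac1{2\pi}\cdot\bigl(2\sqrt{6n}\bigr)^{-1/2}\cdot\pi\,6^{-1/4}n^{-3/4}=2^{-3/2}6^{-1/2}n^{-1}=\tfrac1{4\sqrt3\,n}$). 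The one place where your write-up remains a sketch is exactly the point you flag yourself: the minor-arc bound and the uniformity of the near-$1$ expansion across the major arc. Asserting that $\mathrm{Re}\log F(e^{-t_n+i\phi})$ drops by a superpolynomial amount for $|\phi|>n^{-3/5}$ ``by elementary estimates'' names the hard step without carrying it out; this is where essentially all of the work in Hardy--Ramanujan (or in the cleaner treatments via the modular transformation of the Dedekind $\eta$-function) actually lives, so as submitted this is an outline rather than a complete proof. For the application in the paper none of this machinery is required: any bound of the form $p(n)=e^{o(n)}$ suffices, and that already follows from your own crude inequality $p(n)\le F(e^{-t})e^{nt}\le \exp\bigl(\tfrac{\pi^2}{6t}+nt\bigr)=\exp\bigl(\pi\sqrt{2n/3}\bigr)$ at $t=t_n$, using only $e^{x}-1\ge x$ and no complex analysis.
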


(We do not need the full strength of \Tr{HRthm} in this paper; any sub-exponential upper bound on $p(n)$ would suffice, and such bounds are much easier to obtain. See e.g.\ \cite[Lemma 3.4]{analyticity}.)

\subsection{Quasi-transitive planar lattices}\label{qtl}

In this subsection we will consider graphs that embed in $\mathbb{R}^2$, in a `nice' way.

\begin{definition}
A \defi{\qtl} is a locally finite, connected graph $G$ embedded in $\R^2$ \st\ for some linearly independent vectors $v_1,v_2\in \R^2$, translation by each $v_i$ preserves $G$, and the action defined by the translations has finitely many orbits of vertices. 
\end{definition}

Although not part of the definition, we will always assume that \qtl s are $2$-connected. This is only a minor assumption because the boundary of a face of $G$ contains a cycle that surrounds every other boundary vertex of the same face. By deleting every vertex that does not lie in the surrounding cycle of some face of $G$, we obtain a $2$-connected \qtl\ with the same $p_c$ as the initial graph.

\begin{definition}
Given a finite subgraph $H$ of an infinite graph $G$, the \defi{minimal edge cut} $\partial^E H$ of $H$ is defined to be the minimal set of edges lying in $E(G)\setminus E(H)$ with at least one endvertex in $H$, the removal of which disconnects $H$ from infinity. The \defi{minimal vertex cut} $\partial^V H$ of $H$ is the minimal set of vertices in $G\setminus H$ that are incident to $H$, the removal of which disconnects $H$ from infinity.
\end{definition}

It is not hard to see that \qtl s are quasi-isometric to $\R^2$, inheriting some of its geometric properties. More precisely any \qtl\ $G$
\begin{enumerate}[(1)]
\item has quadratic growth, i.e.\ there are constants $c_1=c_1(G),c_2=c_2(G)>0$ such that
$$c_1 n^2\leq |B(u,n)|\leq c_2 n^2$$
for every $u\in V(G)$ and every positive integer $n$, where $B(u,n)$ denotes the ball of radius $n$ around $u$ in either graph-theoretic distance or euclidean distance,
\item satisfies a $2$-dimensional isoperimetric inequality, i.e.\ there is a constant $c=c(G)>0$ such that for any finite subgraph $H\subset G$,
$$|\partial^V H|\geq c \sqrt{|H|}.$$
\end{enumerate}

It will be useful to define a more general type of isoperimetric inequality.
\begin{enumerate}[(1)]
\setcounter{enumi}{2}
\item \label{isop} Given a positive number $d$ (not necessarily an integer), we say that a graph $G$ satisfies a $d$-dimensional isoperimetric inequality if there is a constant $c>0$ such that for any finite subgraph $H\subset G$,
$$|\partial^V H|\geq c |H|^{\frac{d-1}{d}}.$$
\end{enumerate}

\noindent
Any \qtl\ $G$ is easily seen to satisfy the following properties as well:
\begin{enumerate}[(1)]
\setcounter{enumi}{3}
\item \label{quasi-geodesic} For some $o\in V(G)$, there is a $2$-way infinite path $X=(\ldots,x_{-1},x_0=o,x_1,\ldots)$ containing $o$ and a constant $l>0$, such that 
$d_X(x_i,x_j)\leq l d_G(x_i,x_j)$ for every $i,j\in\mathbb{Z}$, where $d_X$ and $d_G$ denote distance in $X$ and $G$, respectively. Moreover, it is not too hard to see that we can choose $X$ to be periodic, i.e.\ to satisfy $X+ t v_1= X$ for some $t\in \N$.
Any such path is called a \defi{quasi-geodesic}. 
\item \label{finitely presented} The cycle space of $G$ is generated by cycles of bounded length.
\item \label{$1$-ended} $G$ is \defi{$1$-ended}, i.e.\ for every finite subgraph $H$ of $G$, the graph $G\setminus H$ has a unique infinite component.
\end{enumerate}

\section{Interfaces} \label{pints site-pints}

In this section we recall the notions of (bond-)\pint s and site-\pint s introduced in \cite{analyticity}. In most cases, we will work with the following families of graphs:
\begin{enumerate}[(a)]
\item \qtl s,
\item the standard cubic lattice $\Z^d$, $d>1$,
\item $\mathbb{T}^d$, the graph obtained by adding to $\Z^d$, $d>1$ the `monotone' diagonal edges, i.e.\ the edges of the form $xy$ where $y_i-x_i=1$ for exactly two coordinates $i\leq d$, and $y_i=x_i$ for all other coordinates ($\mathbb{T}^2$ is isomorphic to the triangular lattice).
\end{enumerate} 
Let us denote with $\mathcal{S}$ the set of all those graphs.

For each $G\in \cs$ we will fix a basis $\cp=\cp(G)$ of the cycle space $\cc(G)$ (defined in \Sr{graph defs}). If $G$ is a \qtl\ , $\cp$ consists of the cycles bounding the faces of $G$. For $G=\Z^d$ we can use the squares bounding the faces of its cubes as our basis $\cp$, and for 
$G=\mathbb{T}^d$ we can use the triangles obtained from the squares once we add the `monotone' diagonal edges. Our definition of the \defi{\pint} of \g depends on the choice of $\cp(G)$, and so in \cite{analyticity} we used the notation `\cp-interface' to emphasize the dependence. Since in this paper we are fixing $\cp(G)$ for each $G\in \cs$, we will simplify our notation and just talk about \pint s. 

Let us start by defining \pint s for \qtl s.

\begin{definition} 
Let $G$ be a \qtl\ and $o$ a vertex of $G$. A subgraph $P$ of $G$ is called a \defi{(bond-)\pint} (of $o$) if there is a finite connected subgraph $H$ of $G$ containing $o$ such that $P$ consists of the vertices and edges incident with the unbounded face of $H$. The \defi{boundary $\partial P$} of $P$ is the set of edges of $G$ that are incident with $P$ and lie in the unbounded face of $H$. We say that an \pint\ \defi{occurs} in a bond percolation instance \oo\ if all edges in $P$ are occupied and all edges in $\partial P$ are vacant.
\end{definition}

As remarked in \cite{analyticity} \pint s are connected graphs, and satisfy the following property.
\begin{lemma} \label{scs disjoint} 
For any graph $G$, if two occurring \pint s of $G$ share a vertex then they coincide. 
\end{lemma}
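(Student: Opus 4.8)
\textbf{Proof proposal for \Lr{scs disjoint}.}

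The plan is to exploit the defining feature of an \pint: it is the ``outer boundary layer'' of a finite connected subgraph, and what lies strictly outside it (the unbounded face of the corresponding $H$) cannot contain any occupied edge of the percolation instance in which the \pint\ occurs. So suppose $P_1$ and $P_2$ are \pint s of $G$ that both occur in the same instance $\oo$ and share a vertex $v$. Let $H_1, H_2$ be finite connected subgraphs witnessing that $P_1, P_2$ are \pint s, with unbounded faces $F_1, F_2$ respectively, so that $P_i$ consists of the vertices and edges incident with $F_i$, and $\partial P_i$ is the set of edges incident with $P_i$ lying in $F_i$. Since $P_i$ occurs in $\oo$, every edge of $P_i$ is occupied and every edge of $\partial P_i$ is vacant in $\oo$.

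The key step is to show that an occupied edge cannot lie in the open unbounded face $F_i$. Indeed, if $e$ is occupied and drawn inside $F_i$, then $e$ is not an edge of $P_i$ (those are incident with $F_i$, not contained in it), and $e$ is not in $\partial P_i$ (those are vacant), and $e$ is not an edge of $H_i$ either. More carefully: any edge incident with a vertex of $P_i$ is either an edge of $H_i$, or an edge of $P_i$ (if it is incident with $F_i$ but lies in the closure of the bounded region), or an edge of $\partial P_i$ (if it lies in $F_i$). So starting from $v$, which lies on $P_1$, any path in the occupied subgraph that leaves through $v$'s incident edges must use an edge of $P_1$ or an edge of $H_1$; it cannot cross into $F_1$ because the only occupied edges incident with $P_1$ that remain are in $P_1$ itself. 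Hence the occupied cluster of $v$ is contained in $H_1 \cup P_1$, which is contained in the closed bounded complementary region of $F_1$; in particular it is disjoint from the open face $F_1$. The same holds with the index $2$.

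Now I would derive the coincidence. Both $P_1$ and $P_2$ lie in the occupied cluster $C_v$ of $v$ (each $P_i$ is connected, occupied, and contains $v$). By the previous paragraph $C_v \subseteq \overline{\R^2 \setminus F_1}$, so $P_2 \subseteq \overline{\R^2 \setminus F_1}$; symmetrically $P_1 \subseteq \overline{\R^2 \setminus F_2}$. The role of $P_i$ is precisely to be the set of vertices/edges incident with the unbounded face of $C_v$ (or rather of any finite connected subgraph sandwiched between $H_i$ and $C_v$); I would argue that $P_i$ is in fact determined by $C_v$ alone, namely $P_i$ is exactly the subgraph of $C_v$ incident with the unbounded face of $C_v$ in the plane. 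This uses that $F_i$ is a face of $H_i$ and $C_v \supseteq H_i$ sits inside $\overline{\R^2\setminus F_i}$, so the unbounded face of $C_v$ is contained in $F_i$, forcing the boundary layer of $C_v$ to be contained in $P_i$; conversely every vertex of $P_i$ is incident with $F_i \supseteq$ the unbounded face of $C_v$, and (using $2$-connectedness / the standing assumptions on lattices, or \cite[Theorem 10.4]{analyticity}, \Tr{unique} of the excerpt, which asserts uniqueness of the \pint\ contained in a finite connected subgraph) it is in fact incident with the unbounded face of $C_v$. Applying \Tr{unique} to the finite connected graph $C_v$, there is a \emph{unique} \pint\ contained in it; since both $P_1$ and $P_2$ are \pint s contained in $C_v$, we get $P_1 = P_2$.

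The main obstacle is the planar topology bookkeeping in the previous paragraph: pinning down that ``no occupied edge lies in the unbounded face of $H_i$'' really does confine the occupied cluster to the closed bounded region, and that the boundary layer of $C_v$ coincides with $P_i$. Both are intuitively clear from the picture but need the standing assumptions ($2$-connectedness, $1$-endedness) and the Jordan-curve-type facts about plane graphs. Once those are in place — or once one simply invokes the uniqueness statement \Tr{unique} with $C_v$ in the role of the finite connected subgraph — the conclusion $P_1 = P_2$ is immediate. (For the non-planar members of $\cs$, namely $\Z^d$ and $\mathbb{T}^d$, the same argument runs with the appropriate higher-dimensional notion of the ``unbounded complementary region'' in place of the unbounded face, exactly as set up in \cite{analyticity}; I would just cite \Tr{unique} uniformly to avoid repeating the topology in each case.)
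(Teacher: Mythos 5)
Your reduction to \Tr{unique} is sound, and it is in fact exactly how the paper itself treats this fact: the paper gives no standalone proof of \Lr{scs disjoint} (it is quoted from \cite{analyticity}), and later, in the proof of \Prr{br ineq}, the disjointness of occurring interfaces is justified by citing \Tr{unique} directly. Concretely: since each $P_i$ is connected and occurs, all its edges are occupied, so $P_i$ lies in the occupied cluster $C_v$ of the shared vertex $v$; the converse part of \Tr{unique} gives that $C_v$ is finite and separates $o$ from infinity, and the forward part gives a \emph{unique} occurring interface meeting $C_v$ --- both $P_1$ and $P_2$ qualify, hence they coincide. Two remarks. First, the long ``planar topology bookkeeping'' portion of your write-up (confining $C_v$ to the closed bounded region, identifying $P_i$ with the boundary layer of $C_v$) is dispensable once you invoke \Tr{unique}, and as written it is the weakest part of the argument: you repeatedly flag Jordan-curve-type steps as ``needing'' the standing assumptions without supplying them, and it only covers the planar case anyway. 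Second, the one genuine caveat is circularity relative to the source: both \Lr{scs disjoint} and \Tr{unique} are imported from \cite{analyticity}, and if the proof of Theorem 10.4 there uses the disjointness property, your derivation would not serve as a from-scratch proof. Within the logical structure of the present paper, however, where \Tr{unique} is taken as given, the deduction is complete and correct.
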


With some thought this notion can be generalised to higher dimensions in such a way that a unique \pint\ is associated to any cluster. The reader may already have their own favourite definition of \pint\ for $G=\Z^d$ or $G=\mathbb{T}^d$, and as long as that definition satisfies \Tr{unique} below it will coincide with ours. For the remaining readers we offer the following abstract definition. For (site percolation on) $G=\mathbb{T}^d$ we offer a simpler alternative definition implicit in \Prr{connected}.

\setcounter{equation}{6}

\medskip
To define \pint s in full generality, we need to fix first some notation.
From now on, we will be assuming that 
\labtequ{assumptions}{\g is an $1$-ended and $2$-connected graph.} Every edge $e=vw\in E(G)$ has two \defi{directions} $\ar{vw},\ra{wv}$, which are the two directed sets comprising $v,w$. The head $head(\ar{vw})$ of $\ar{vw}$ is $w$. Given $F\subset E(G)$ and a subgraph $D$ of $G$, let $\ard{F}{D}:= \{\ar{vz} \mid vz\in F, z\in V(D) \}$ be the set of directions of the elements of $F$ towards $D$.

Let $\cp$ denote a basis of $\cc(G)$ (which we fixed at the beginning of this section). A \defi{\cp-path connecting} two directed edges $\ar{vw}, \ra{yx} \in \overleftrightarrow{E(G)}$ is a path $P$ of \g \st\ the extension $vw P yx$ is a subpath of an element of \cp. Here, the notation $vw P yx$ denotes the path with edge set $E(P) \cup \{vw, yx\}$, with the understanding that the endvertices of $P$ are $w,y$. Note that $P$ is not endowed with any notion of direction, but the directions of the edges $\ar{vw}, \ra{yx}$ it connects do matter. We allow $P$ to consist of a single vertex $w=y$.

We will say that $P$ \defi{connects} an undirected edge $e\in E(G)$ to $\ar{f}\in \overleftrightarrow{E(G)}$ (respectively, to a set $J\subset \overleftrightarrow{E(G)}$), if $P$ is a \cp-path connecting one of the two directions of $e$ to $\ar{f}$ (resp.\ to some element of $J$).

\begin{definition} \label{def con dir}
We say that a set ${J}\subset \overleftrightarrow{E(G)}$ is \defi{$F$-connected} for some $F\subset E(G)$, if \fe\ proper bipartition $ ({J_1},{J_2})$ of ${J}$, there is a \cp-path in $G\setminus F$ connecting an element of ${J_1}$ to an element of ${J_2}$.
\end{definition}

We are now ready to give the formal definition of the central notion of the paper. 

\begin{definition}\label{pint}
A \defi{(bond-)\pint} of \g is a pair $(P,\partial P)$ of sets of edges of \g with the following properties
\begin{enumerate}
\item \label{pint a} $\partial P$ separates $o$ from infinity; 
\item \label{pint x} There is a unique finite component $D$ of $G\setminus \partial P$ containing a vertex of each edge in $\partial P$;
\item \label{pint c} 
\ard{\partial P}{D} is $\partial P$-connected; and
\item \label{pint b} $P = \{e \in E(D) \mid \text{ \ti\ a \cp-path in $G\setminus \partial P$ connecting $e$ to $\ard{\partial P}{D}$ } \}$.
\end{enumerate}
\end{definition}

We say that an \pint\ $(P,\partial P)$ occurs in a bond percolation instance $\omega$ if the edges of $P$ are occupied, and the edges of $\partial P$ are vacant. 

(Bond-)\pint s are specifically designed to study bond percolation on $G$. There is a natural analogue for site percolation. For an \pint\ $(P,\partial P)$ of \g we let $V(P)$ denote the set of vertices incident with an edge in $P$, and we let $V(\partial P)$ denote the set of vertices incident with an edge in $\partial P$ but with no edge in $P$. We say that an \pint\ $(P,\partial P)$ is a \defi{site-\pint}, if no edge in $\partial P$ has both its end-vertices in $V(P)$. We say that a site-\pint\ $(P,\partial P)$ occurs in a site percolation instance $\omega$ if the vertices of $V(P)$ are occupied, and the vertices of $V(\partial P)$ are vacant. We will still use $P$ and $\partial P$ to refer to $V(P)$ and $V(\partial P)$.

We say that $(P,\partial P)$ meets a cluster $C$ of $\omega$, if either $P\cap E(C)\neq \emptyset$, or $P = E(C) =\emptyset$ and $\partial P=\partial C$, where $\partial C$ is the set edges in $E(G)\setminus E(C)$ with at least one endvertex in $C$ (in which case $C$ consists of $o$ only).

\medskip
The following result applies to both bond- and the site-interfaces.
\begin{theorem}[{\cite[Theorem~10.4.]{analyticity}}] \label{unique}
For every finite (site) percolation cluster $C$ of $G$ such that $C$ separates
$o$ from infinity, there is a unique (site-)\pint\ $(P,\partial P)$ that meets $C$ and occurs. Moreover, we have $P\subset E(C)$ and $\partial P\subset \partial C$.

Conversely, every occurring (site-)\pint\ meets a unique percolation cluster
$C$, and $\partial C$ separates $o$ from infinity (in particular, $C$ is finite).
\end{theorem}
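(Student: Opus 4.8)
The plan is to prove the two directions of the equivalence by exhibiting, for each finite cluster $C$ separating $o$ from infinity, a natural candidate \pint, and conversely by reading off the cluster from a given occurring \pint. Since \Tr{unique} is quoted directly from \cite{analyticity}, the content of the proof here is just to unwind \Dr{pint} and check the four defining properties against the combinatorial structure of a percolation cluster; I would present it at the level of verifying axioms rather than re-deriving the theory.

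For the first direction, let $C$ be a finite cluster with $\partial C$ separating $o$ from infinity. The natural choice is to take $\partial P$ to be the minimal sub-cut of $\partial C$ that still separates $o$ from infinity — concretely, the set of edges of $\partial C$ incident with the (unique, by $1$-endedness) infinite component of $G\setminus C$, after discarding any edges not needed to block $o$ from $\infty$. Then $D$ should be the component of $G\setminus\partial P$ containing $o$; one checks it is finite, contained in $C$, and meets every edge of $\partial P$, giving \eqref{pint x}. Property \eqref{pint a} is immediate from the choice of $\partial P$. For \eqref{pint c}, the key point is that $\ard{\partial P}{D}$ is $\partial P$-connected because $D\subseteq C$ is connected and \g has its cycle space generated by \cp (i.e.\ \cp-connectivity is implied by ordinary connectivity when no edge of $\partial P$ is used) — here one invokes that every cycle, hence every path between two boundary directions inside $D$, decomposes along \cp. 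Finally one \emph{defines} $P$ by the formula in \eqref{pint b}; by construction $P\subseteq E(D)\subseteq E(C)$, all edges of $P$ are occupied (they lie in $C$) and all edges of $\partial P$ are vacant (they lie in $\partial C$), so $(P,\partial P)$ occurs and meets $C$. Uniqueness follows because any occurring \pint\ meeting $C$ must have $\partial P\subseteq\partial C$ and $P\subseteq E(C)$ (occupied edges of a \pint\ meeting $C$ lie in $C$; vacant boundary edges incident to $C$ lie in $\partial C$), and then properties \eqref{pint a}--\eqref{pint b} pin down $\partial P$ and $P$ exactly; I would also cite \Lr{scs disjoint} to rule out two distinct occurring \pint s sharing the vertex $o$.

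For the converse, given an occurring \pint\ $(P,\partial P)$, let $D$ be as in \eqref{pint x} and let $C$ be the cluster of $o$ in $\omega$. Since all edges of $P\cup\{$edges of $D$ reachable from $\ard{\partial P}{D}$ via \cp-paths$\}$ are occupied and $\partial P$ is vacant, $C$ is contained in the finite set $D$, hence finite; that $\partial C$ separates $o$ from infinity is then automatic. One then checks $P\cap E(C)\neq\emptyset$ (or the degenerate case $C=\{o\}$ handled by the clause in the definition of ``meets''), so $(P,\partial P)$ meets $C$, and uniqueness of $C$ is trivial since a \pint\ occurs in a fixed $\omega$ and determines $o$.

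The main obstacle is \eqref{pint c} together with the exact reconstruction of $\partial P$ from $\partial C$: one must argue that discarding the ``inessential'' edges of $\partial C$ leaves a set that (a) still separates, (b) has the unique-component property \eqref{pint x}, and (c) is \cp-connected when directed towards $D$. Step (c) is where the hypothesis that \cp\ is a \emph{basis} of $\cc(G)$ (rather than an arbitrary spanning set) and the $2$-connectedness assumption \eqref{assumptions} really enter, and it is the step I would expect to consume most of the work; everything else is bookkeeping with the definitions of ``occurs'' and ``meets''. In practice, since the statement is quoted verbatim as \cite[Theorem~10.4]{analyticity}, I would in the paper simply refer to that source for the detailed verification and only sketch the above correspondence.
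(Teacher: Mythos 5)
The paper itself contains no proof of \Tr{unique}: the statement is imported verbatim from \cite[Theorem~10.4]{analyticity}, so your bottom line --- defer the detailed verification to that source --- is exactly what the authors do, and is the appropriate thing to write.

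That said, the sketch you offer contains assertions that would derail an attempt to actually carry it out, so it should not be presented as a correct outline. First, $D$ is \emph{not} contained in $C$: by \Dr{pint}, $D$ is the finite component of $G\setminus\partial P$, i.e.\ the entire region that the cut $\partial P$ separates from infinity, and it typically contains many vertices and edges (occupied or vacant) that do not belong to the cluster $C$; the true containments run the other way, $P\subset E(C)\subset E(D)$. In particular your justification of property \eqref{pint c}, which rests on ``$D\subseteq C$ is connected'', does not get off the ground --- and \eqref{pint c} is indeed where the work lies; the parity/basis argument required there is of the same flavour as the one written out in the proof of \Prr{connected}. Second, in the converse direction the cluster $C$ met by an occurring \pint\ is, by the definition of ``meets'', the cluster whose edge set intersects $P$ (unique because \pint s are connected); it need not be the cluster of $o$, e.g.\ when the \pint\ surrounds $o$ at a distance, so ``let $C$ be the cluster of $o$'' is a misreading of the statement. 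Third, $o$ need not be a vertex of any \pint\ at all ($\partial P$ merely separates $o$ from infinity), so uniqueness cannot be obtained by applying \Lr{scs disjoint} ``at the vertex $o$''; two occurring \pint s meeting $C$ each share vertices with $C$ but not a priori with each other, and ruling this out requires the explicit reconstruction of $(P,\partial P)$ from $C$ carried out in \cite{analyticity}.
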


This allows us to define the (site-)\pint\ of a cluster $C$ of a percolation instance $\oo$ as the unique occurring (site-)\pint\ that meets $C$.

\begin{remark}\label{t/2}
Let $G$ be a graph the cycle space of which admits a basis consisting of cycles of length bounded by some constant $t>0$. Then for every \pint\ $(P,\partial P)$ of $G$, and any pair of edges in $\partial P$, there is a path contained in the $t/2$-neighbourhood of $\partial P$ connecting the pair (see \cite[p. 47]{analyticity}).
\end{remark}

We define a \defi{\mpint} to be a finite collection of pairwise disjoint \pint s, and a \defi{site-\mpint} to be a finite collection of pairwise disjoint site-\pint s. An example is shown in \fig{figscv}.

\begin{figure}[htbp]
\vspace*{5mm}
\centering
\noindent

\begin{overpic}[width=.6\linewidth]{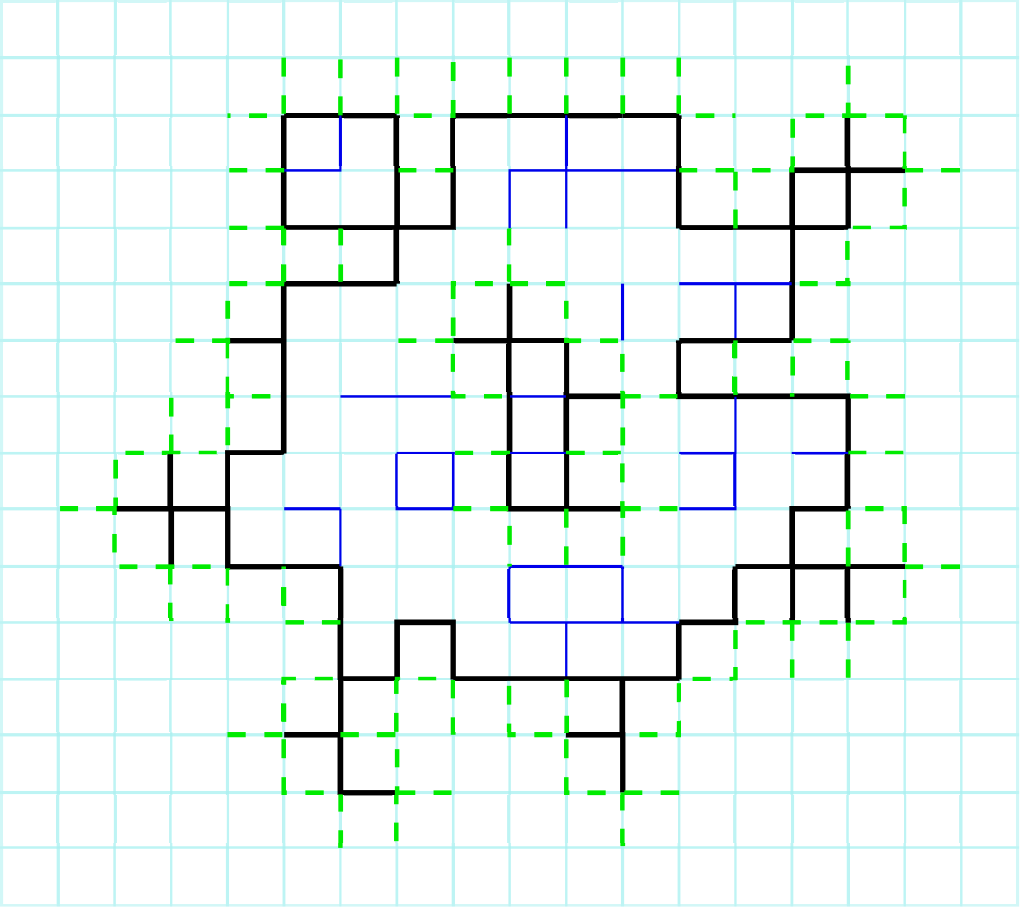}
\put(51,40){$o$}
\put(56,52){$P_1$}
\put(16,70){$P_2$}
\end{overpic}
\begin{minipage}[c]{0,9\textwidth}
\vspace*{5mm}
\caption{\small An example of a multi-interface $M$, comprising two nested interfaces $P_1,P_2$. We depict $M$ with bold lines, and $\partial M:= \partial P_1 \cup \partial P_2$ with dashed lines (green, if colour is shown). The edges not participating in $M$ are depicted in plain lines (blue, if colour is shown).} \label{figscv}
\end{minipage}
\end{figure}

In the case where \labtequ{triangulation}{$G\in \mathcal{S}$ is a graph the cycle space of which is generated by its triangles,} site-\pint s admit an equivalent definition that is more standard and easier to work with:

\begin{proposition}\label{connected}
Let $G\in \mathcal{S}$ be a graph satisfying \eqref{triangulation}, and let $D$ be a finite induced subgraph of $G$ containing $o$. Let $\overline{D}$ be the union of $D$ with the finite connected components of $G\setminus D$. Define $P$ to be the set of vertices of $\overline{D}$ which have a neighbour not in $\overline{D}$, and let $\partial P$ be the set of vertices of $G\setminus\overline{D}$ that have a neighbour in $\overline{D}$. Then $(P,\partial P)$ is the site-\pint\ of $D$. Moreover, any site-\pint\ can be obtained in this way.
\end{proposition}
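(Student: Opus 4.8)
The plan is to show that the pair $(P,\partial P)$ constructed from $D$ satisfies all four conditions of \Dr{pint}, that the non-existence of a $\partial P$-edge with both endvertices in $V(P)$ makes it a site-\pint, and finally that every site-\pint\ arises this way. Throughout I would exploit the hypothesis \eqref{triangulation}: since $\cp$ consists of triangles, a \cp-path connecting two directed edges is either a single vertex (when the two edges together with the connecting vertex form a triangle) or, trivially, there are no longer options; in particular \cp-connectivity statements reduce to statements about common neighbours and adjacency inside a triangle. This is what collapses the abstract \Dr{pint} to the elementary neighbour-based description.

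First I would verify condition~\eqref{pint a}: $\partial P$ is the vertex boundary of $\overline D$ in $G$, and since $\overline D$ is finite and contains $o$ while $G$ is $1$-ended, removing $\partial P$ disconnects $o$ (inside $\overline D$) from infinity. For condition~\eqref{pint x} I would take $D' := \overline D \setminus \partial P$; by construction $\overline D$ has no finite component of $G\setminus\overline D$ attached (they were swallowed), so $G\setminus\partial P$ has exactly one finite component meeting every vertex of $\partial P$, namely the component(s) of $\overline D\setminus\partial P$ — here I would need to check $\overline D \setminus \partial P$ is connected, which follows because $D$ is connected, $o\in D$, and the finite components glued in are each attached to $D$; and that it contains a vertex of each edge of $\partial P$, which is immediate since every vertex of $\partial P$ has, by definition, a neighbour in $\overline D$, and that neighbour lies in $\overline D\setminus\partial P$ as $\partial P\cap\overline D=\emptyset$. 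Call this component $\widehat D$. Condition~\eqref{pint c} asks that $\ard{\partial P}{\widehat D}$ is $\partial P$-connected: given a proper bipartition, pick directed edges $\ar{v_1 w_1}, \ar{v_2 w_2}$ on the two sides with $w_i\in\widehat D$; a path in $\widehat D\subseteq G\setminus\partial P$ between $w_1$ and $w_2$ exists by connectivity of $\widehat D$, and one then upgrades it to a \cp-path by the triangle hypothesis at each end (or by subdividing into single-vertex \cp-steps), which is the one genuinely fiddly point.

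For condition~\eqref{pint b}, I must show $P$ (as defined via neighbours outside $\overline D$) equals the set of edges of $\widehat D$ joined by a \cp-path in $G\setminus\partial P$ to $\ard{\partial P}{\widehat D}$. Under \eqref{triangulation} an edge $e=xy$ of $\widehat D$ is \cp-connected (via a single-vertex path $w=x$ or $w=y$) to $\ard{\partial P}{\widehat D}$ iff $x$ or $y$ forms a triangle with some directed boundary edge $\ar{vz}$, $z\in\widehat D$ — equivalently iff an endvertex of $e$ has a neighbour in $\partial P$, i.e.\ a neighbour outside $\overline D$. That is precisely the defining property of $P$ in the Proposition (modulo identifying vertex-set with incident-edge-set, which is the paper's stated convention), so the two descriptions agree. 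Then I would observe that by construction no edge of $\partial P$ — which has an endvertex in $G\setminus\overline D$, hence not in $\widehat D$ — can have both endvertices in $V(P)\subseteq V(\widehat D)$, so $(P,\partial P)$ is a site-\pint.

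Finally, the converse: given any site-\pint\ $(P,\partial P)$ with its finite component $D$ of $G\setminus\partial P$ as in \Dr{pint}, set $D_0$ to be the induced subgraph on $V(D)$. Then $\overline{D_0}$ recovers $D$ together with exactly the finite components of $G\setminus D_0$ whose ``exit'' would have to pass through $\partial P$ — by $1$-endedness the infinite component of $G\setminus\partial P$ is unique, so $\overline{D_0}\cap(G\setminus\partial P)=D$ — and running the Proposition's construction on $D_0$ returns $(P,\partial P)$ by the equivalence of the two descriptions established above. The main obstacle I anticipate is exactly the bookkeeping in this last paragraph and in condition~\eqref{pint x}: making sure that ``fill in the finite components'' ($\overline{\cdot}$) interacts correctly with ``delete $\partial P$'', i.e.\ that no finite component of $G\setminus D_0$ sneaks past $\partial P$ and that $\widehat D$ is genuinely the unique finite component — everything else is a routine translation between the triangle-\cp-path language and the common-neighbour language, and a careful invocation of \Tr{unique} can be used to nail down uniqueness if the direct argument gets unwieldy.
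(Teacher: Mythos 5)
Your overall architecture follows the paper's: build the pair, check the conditions of \Dr{pint} together with the site condition, then unwind the definition for the converse. But there is a genuine gap at exactly the step you flag as ``the one genuinely fiddly point'', namely condition \eqref{pint c}, and the method you sketch for it cannot work. Since \cp\ consists of triangles, a \cp-path connecting two directed edges $\ar{vw},\ra{yx}$ is forced to be a single vertex $w=y$ with $vw$ and $wx$ lying on a common basic triangle; there are no longer \cp-paths to ``upgrade'' to or to ``subdivide into''. So what \eqref{pint c} demands is that for \emph{every} proper bipartition $(J_1,J_2)$ of \ard{\partial P}{D} one exhibits two boundary edges, one from each class, sharing their $D$-endvertex and lying on a common triangle of \cp. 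A $w_1$--$w_2$ path inside $\widehat D$ gives nothing of the sort: its interior edges belong to $D$, not to $\partial P$, so they are not elements of $J_1\cup J_2$ and cannot be used to chain the two sides together. The paper closes this gap with a cycle-space parity argument: supposing a bipartition $(L_1,L_2)$ with no connecting \cp-path, pick $e\in L_1$ and $f\in L_2$, close them into a cycle $K$ using one path inside $D$ and one in its complement, and write $K=\sum C_i$ with $C_i\in\cp$. If no triangle $C_i$ yields a \cp-path from $L_1$ to $L_2$, then each $C_i$ meets each of $L_1,L_2$ (directed into $D$) in an even number of elements, whereas $K$ meets each in exactly one --- contradicting that addition in $\cc(G)$ preserves parity. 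This argument is the heart of the proof and is missing from your proposal.

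A secondary soft spot: in your treatment of \eqref{pint b} you assert that an endvertex of $e$ having a neighbour in $\partial P$ is \emph{equivalent} to $e$ being \cp-connected to a boundary edge; the forward implication is not automatic, since two edges sharing a vertex need not lie on a common triangle of \cp. The paper avoids this by defining the candidate edge set $Q$ directly through the \cp-path condition and proving only the inclusion it needs ($V(Q)\subseteq P$, using that $D$ is induced), then reducing the whole statement to showing that the edge pair $(Q,L)$ is the bond-\pint\ of $D$. Your verifications of \eqref{pint a} and \eqref{pint x}, the site condition, and the converse are essentially sound and match the paper's.
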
 
\begin{proof} 
Let us start from the last statement which is easier to prove. By definition, for every site-\pint\ $(P,\partial P)$, $G\setminus \partial P$ has a unique finite component $D$. Notice that $G\setminus D$ has no finite components, i.e.\ $\overline{D}=D$. Now $\partial P$ is the set of vertices of $G\setminus\overline{D}$ that have a neighbour in $\overline{D}$. Moreover, all vertices of $\overline{D}$ which have a neighbour not in $\overline{D}$ belong to $P$. On the other hand, each vertex $u$ of $P$ has a neighbour in $\partial P$ because there is a \cp-path connecting $u$ to $\partial P$ and $G$ satisfies \eqref{triangulation}.

Let $L$ be the set of edges with one endvertex in $P$ and another in $\partial P$, and let $Q$ be the set of edges $e$ in $E(D)$ such that there is a \cp-path in $G\setminus L$ connecting $e$ to \ard{L}{D}. If $e$ is an edge in $Q$, then both its endvertices are incident with an edge in $L$, hence both of them lie in $P$, because \cp\ contains only triangles, and $D$ is an induced subgraph. It suffices to show that $(Q,L)$ is the \pint\ of $D$, as this will immediately imply that $(P,\partial P)$ is the corresponding site-\pint\ of $D$. 

It is easy to verify that $L$ satisfies the first two items of Definition \ref{pint}, and that $Q$ satisfies the last item of the definition. It remains to prove that \ard{L}{D} is $L$-connected. Assuming not, we find a proper bipartition $(L_1,L_2)$ of $L$, such that no \cp-path connects $L_1$ with $L_2$. Consider $e\in L_1$ and $f\in L_2$. Then there are two paths connecting $e$ with $f$, where one of them lies in $D$ and the other one lies in the complement of $D$. The union of the two paths with $e$ and $f$ is a cycle, which we denote $K$. 

Since \cp\ is a basis for the cycle space $\cc(G)$, $K$ can be expressed as a sum $\sum C_i$ of cycles $C_i\in \cp$. Let $L_{C_i}:= \overleftrightarrow{L \cap E(C)}$ be the directions of edges of $L$ appearing in $C_i$. Note that no cycle $C_i$ contains a path in $G\setminus L$ connecting $L_1$ to $L_2$, because no such path exists by the choice of $(L_1,L_2)$. Consequently, $L_{C_i}$ has an even number of its elements in each of $L_1,L_2$, because each component of $C_i\setminus L$ (which is a subpath of $C_i$) is incident with either 0 or 2 such elements pointing towards the component, and they lie both in $L_1$ or both in $L_2$ or both in none of the two.

This leads into a contradiction by a parity argument: notice that our cycle $K$ contains an odd number of directions of edges in each of $L_1,L_2$, namely exactly one in each ---$e$ and $f$ respectively--- because $P$ avoids $L$ and $Q$ avoids $D$, hence $\ard{L}{D}$, by definition. But then our equality $K=\sum C_i$ is impossible by the above claim because sums in $\cc(G)$ preserve the parity of the number of (directed) edges in any set. This contradiction proves our statement.
\end{proof}

Most of the time we will write $P$ instead of $(P,\partial P)$ to simplify the notation.

\section{Growth rates}\label{Growth rates}
In this section we give the formal definition of $b_r$ in its bond and site version, obtain some basic facts about it, and establish the connection to percolation.

\medskip
Given a graph $G$, we let $I_{n,r,\epsilon}=I_{n,r,\epsilon}(G)$ denote the set of \pint s $P$ with $|P|=n$ and $(r-\epsilon)n \leq |\partial P| \leq (r+\epsilon)n$. Here $|\cdot|$ counts the number of edges.
Similarly, we let $MI_{n,r,\epsilon}=MI_{n,r,\epsilon} (G)$ denote the set of \mpint s $P$ with $|P|=n$ and $(r-\epsilon)n \leq |\partial P| \leq (r+\epsilon)n$.

To avoid introducing a cumbersome notation, we will still write $I_{n,r,\epsilon}$ and $MI_{n,r,\epsilon}$ for the site-\pint s and site-\mpint s, respectively, of size $n$ and boundary size between $(r-\epsilon)n$ and $(r+\epsilon)n$. Moreover, we will write $c_{n,r,\epsilon}^\circ$ and $c_{n,r,\epsilon}^\odot$ for the cardinality of $I_{n,r,\epsilon}$ and $MS_{n,r,\epsilon}$, respectively. 

\

{\bf The definitions, results and proofs that follow apply to both\\ (bond-)\pint s and site-\pint s unless otherwise stated. }

\begin{definition} \label{def br}
Define the (upper) exponential growth rate $b_r^\circ(G)$ of the (bond- or site-) \pint s of $G$ with surface-to-volume ratio $r$ by
$$b_r^\circ=b_r^\circ(G):= \lim_{\epsilon \to 0} \limsup_{n\to\infty} {c_{n,r,\epsilon}^\circ(G)}^{1/n}.$$
Similarly we define the (upper) exponential growth rate $b_r^\circ(G)$ of the (site-)\mpint s of \g with surface-to-volume ratio $r$ by
$$b_r^\odot=b_r^\odot(G):= \lim_{\epsilon \to 0} \limsup_{n\to\infty} {c_{n,r,\epsilon}^\odot(G)}^{1/n}.$$
\end{definition}

We remark that in Hammond's definition of the exponential growth rate of lattice animals with surface-to-volume ratio $r$, $\epsilon$ depends on $n$. The above definition simplifies the proofs of some of the following results.

We are going to study $b_r^\circ$ and $b_r^\odot$ as functions of $r$. As it turns out, these two functions coincide:

\begin{lemma} \label{pint mpint}
Let $G\in \mathcal{S}$. Then $b_r^\circ(G) = b_r^\odot(G)$.
\end{lemma}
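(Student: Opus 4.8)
The plan is to prove the equality $b_r^\circ(G) = b_r^\odot(G)$ by showing two inequalities. The inequality $b_r^\circ(G) \le b_r^\odot(G)$ is immediate, since every \pint\ is in particular a \mpint\ (a collection consisting of a single \pint), so $c_{n,r,\epsilon}^\circ(G) \le c_{n,r,\epsilon}^\odot(G)$ for all $n,r,\epsilon$, and the inequality passes to the $\limsup$ and then to the limit as $\epsilon\to 0$. The substance is the reverse inequality $b_r^\odot(G) \le b_r^\circ(G)$, i.e.\ that allowing several components does not increase the exponential growth rate.

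For the reverse inequality, the idea is the standard "a multi-object is determined by an unordered partition into single objects" counting argument. Fix $\epsilon>0$ and consider a \mpint\ $M = \{P_1,\dots,P_k\}$ with $|M| = \sum_i |P_i| = n$ and $(r-\epsilon)n \le |\partial M| \le (r+\epsilon)n$, where $\partial M = \bigsqcup_i \partial P_i$. Writing $n_i := |P_i|$, we have a partition $n = n_1 + \dots + n_k$ of $n$. For each part, the number of \pint s of size $n_i$ is at most $\sum_{r'} c_{n_i, r', \epsilon'}^\circ$ summed over an appropriate discretised range of surface-to-volume ratios; more simply, one bounds the total number of \pint s of size $n_i$ by a quantity growing like $(b^\circ)^{n_i(1+o(1))}$ where $b^\circ := \sup_{r'} b_{r'}^\circ$ is finite (each \pint\ of size $n_i$ has boundary of size at most $Dn_i$ for $D$ the maximum degree, so only boundedly many ratios $r'$ are relevant and $b^\circ<\infty$). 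Since the $P_i$ are pairwise disjoint, $M$ is determined by the multiset $\{P_1,\dots,P_k\}$, hence
$$c_{n,r,\epsilon}^\odot(G) \le \sum_{\substack{n_1+\dots+n_k = n}} \prod_{i=1}^k (\text{number of \pint s of size } n_i).$$
Using $\prod_i (b^\circ)^{n_i(1+o(1))} = (b^\circ)^{n(1+o(1))}$ and the sub-exponential bound $p(n) = e^{o(n)}$ on the number of partitions of $n$ (\Tr{HRthm}), we get $c_{n,r,\epsilon}^\odot(G) \le (b^\circ)^{n(1+o(1))}$, hence $b_r^\odot(G) \le b^\circ$. This shows $b_r^\odot \le \sup_{r'} b_{r'}^\circ$, which is not yet quite $b_r^\odot \le b_r^\circ$.

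To get the sharper statement matching $r$ on both sides, I would refine the counting: among the parts $n_i$, the "large deviation" contribution must come from parts whose own surface-to-volume ratio is close to $r$. Concretely, since $|\partial M| = \sum_i |\partial P_i|$ and $\sum_i n_i = n$ with $\sum_i |\partial P_i|$ within $\epsilon n$ of $rn$, one splits the index set into parts with $|\partial P_i|$ close to $r n_i$ and the rest; by a pigeonhole/averaging argument the bulk of the total size $n$ lies in parts whose individual ratio is within $O(\epsilon)$ of $r$ (otherwise the weighted-average ratio could not be within $\epsilon$ of $r$ — this needs a little care since ratios are bounded in $[0,D]$, so a small mass of parts with ratio far from $r$ is tolerated but contributes only $e^{o(n)}$ via the trivial bound $(b^\circ)^{o(n)}$). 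Then the product over the bulk parts is bounded by $\prod (c_{n_i, r, O(\epsilon)}^\circ)^{1+o(1)} \le (\limsup_m (c_{m,r,O(\epsilon)}^\circ)^{1/m})^{n(1+o(1))}$, and letting first $n\to\infty$, then $\epsilon\to 0$, the right-hand side tends to $b_r^\circ(G)$. Combined with the partition factor $e^{o(n)}$ and the negligible off-bulk factor, this yields $b_r^\odot(G) \le b_r^\circ(G)$, completing the proof.

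The main obstacle I anticipate is making the "bulk of the mass has ratio close to $r$" step fully rigorous while keeping the bookkeeping of the two $\epsilon$'s (the one inside $c_{n,r,\epsilon}$ and the discretisation scale of ratios) clean; the fact that surface-to-volume ratios are uniformly bounded (by the maximum degree, as $G$ is locally finite) is what makes this work, and it is worth isolating that bound explicitly. Everything else is a routine application of the sub-exponential partition bound together with submultiplicativity-type estimates already implicit in the finiteness of $b_r^\circ$.
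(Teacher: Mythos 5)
Your reduction of $b_r^\odot \le b_r^\circ$ to a partition-counting argument has a genuine gap at the averaging step. You claim that since the weighted average $\sum_i \frac{n_i}{n}\cdot\frac{|\partial P_i|}{n_i}$ lies within $\epsilon$ of $r$, the bulk of the total size $n$ must lie in components whose individual ratio is within $O(\epsilon)$ of $r$. That is false: a \mpint\ can achieve overall ratio $r$ by mixing, say, half of its mass in components of ratio $r_1=r-\tfrac12$ with half in components of ratio $r_2=r+\tfrac12$. Such mixtures contribute roughly $(b^\circ_{r_1})^{n/2}(b^\circ_{r_2})^{n/2}$ to $c^\odot_{n,r,\epsilon}$, and your argument gives no reason why this should be at most $(b^\circ_{r})^{n(1+o(1))}$. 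What is needed to close this is precisely the log-concavity $b^\circ_{tr_1+(1-t)r_2}\ge (b^\circ_{r_1})^{t}(b^\circ_{r_2})^{1-t}$ (\Prr{concave}), which is itself a substantive result of the paper, proved later by a gluing construction --- so the pure counting route cannot stand on its own. A secondary issue: with up to $n$ components, the constant prefactors in the bound $c_m\le C_\delta (b^\circ+\delta)^m$ multiply to an exponential factor unless one first shows, as the paper does, that a \mpint\ of size $n$ has only $O(\sqrt n)$ components.

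The paper's proof goes in the opposite, constructive direction. Starting from the $c^\odot_{n,r,\epsilon}$ \mpint s, it shows each has $O(\sqrt n)$ components (disjoint occurring \pint s must meet a quasi-geodesic through $o$ at distinct vertices, forcing $|P_i|\gtrsim i$), so by the Hardy--Ramanujan bound one may restrict, at a subexponential cost, to a subfamily in which all \mpint s have the same component sizes and shapes. The components are then placed in an array of boxes and joined by short paths, as in \Lr{o box site}, into a single \pint\ of size $n(1+o(1))$ and ratio $r+o(1)$, with only subexponentially many \mpint s mapping to the same \pint. This yields $b^\circ_r\ge b^\odot_r$ directly, without any need to control how the surface-to-volume ratio distributes over the components.
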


We postpone the proof until the next section where the necessary definitions and tools are introduced.

{\bf From now on, except for the proof of \Lr{pint mpint}, we will drop the superscripts and we will simply write $b_r$ and $c_{n,r,\epsilon}$}. In our proofs we will work with \pint s and site-\pint s instead of \mpint s and site-\mpint s.

Similarly to $b_r$, we define the (upper) exponential growth rate of all \pint s of $G$:
$$b=b(G):= \lim_{\epsilon \to 0} \limsup_{n\to\infty} c_{n}(G)^{1/n}$$
where $c_{n}(G):= |\{\text{ \pint s $P$ with $|P|=n$} \}|$. In the following proposition we prove that $b(G)=\max_r b_r(G)$.

\begin{proposition} \label{equality}
Let $G$ be a bounded degree graph. Then there is some $r$ such that $b(G)= b_r(G)$.
\end{proposition}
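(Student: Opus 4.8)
\textbf{Proof plan for Proposition~\ref{equality}.}
The plan is to show that $b(G)=\max_r b_r(G)$ for a bounded degree graph $G$, where the key point is that the boundary size $|\partial P|$ of an \pint\ is controlled linearly by its volume $|P|$. First I would observe that since $G$ has bounded degree, say maximum degree $\Delta$, every \pint\ $P$ with $|P|=n$ satisfies $|\partial P|\leq \Delta n$ (each of the at most $2n$ vertices incident with $P$ can contribute at most $\Delta$ edges to $\partial P$), so the surface-to-volume ratio of any \pint\ lies in the bounded interval $[0,\Delta]$. Consequently, for each $\epsilon>0$ the set of \pint s of size $n$ is covered by finitely many classes $I_{n,r_j,\epsilon}$ with $r_j$ ranging over an $\epsilon$-net of $[0,\Delta]$ of size $O(1/\epsilon)$; hence $c_n(G)\leq \frac{C}{\epsilon}\max_j c_{n,r_j,\epsilon}(G)$, and since $(C/\epsilon)^{1/n}\to 1$, taking $\limsup_n$ and then $\epsilon\to0$ gives $b(G)\leq \sup_r b_r(G)$. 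The reverse inequality $b_r(G)\leq b(G)$ is immediate from $c_{n,r,\epsilon}(G)\leq c_n(G)$, so $b(G)=\sup_r b_r(G)$.

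It remains to upgrade the supremum to a maximum, i.e.\ to show the supremum is attained at some $r\in[0,\Delta]$. Here I would use a compactness/diagonal argument: take a sequence $r^{(k)}\in[0,\Delta]$ with $b_{r^{(k)}}(G)\to \sup_r b_r(G)$, pass to a convergent subsequence $r^{(k)}\to r^*$, and argue that $b_{r^*}(G)\geq \limsup_k b_{r^{(k)}}(G)$. The cleanest way to see this is to note that for any $r^{(k)}$ close enough to $r^*$ (within $\epsilon/2$, say) every class $I_{n,r^{(k)},\epsilon/2}$ is contained in $I_{n,r^*,\epsilon}$, so $c_{n,r^{(k)},\epsilon/2}(G)\leq c_{n,r^*,\epsilon}(G)$; taking $\limsup_n$, then $\limsup_k$ (using $r^{(k)}\to r^*$ to justify that all large $k$ qualify), and finally $\epsilon\to 0$ yields $\sup_r b_r(G)\leq b_{r^*}(G)$, as desired. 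Alternatively, one can quote upper semicontinuity of $r\mapsto b_r(G)$, which follows from the same containment of $\epsilon$-neighbourhoods and is in any case a consequence of the continuity of $b_r$ proved later (\Tr{cont}), but the self-contained argument above avoids forward references.

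I expect the only mildly delicate point to be the bookkeeping with the two nested parameters (the net spacing and the tolerance $\epsilon$) in the interchange of limits: one must choose the net spacing comparable to $\epsilon$, apply $\limsup_{n\to\infty}$ while the number of net points is fixed (so that the factor $O(1/\epsilon)^{1/n}\to1$), and only afterwards let $\epsilon\to0$. Everything else is routine: the bounded-degree hypothesis does all the real work by confining the surface-to-volume ratio to a compact interval, and the rest is the standard observation that a countable union that is "morally finite at each scale" has the same exponential growth rate as its largest piece.
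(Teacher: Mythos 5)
Your overall strategy---confine the surface-to-volume ratio to the compact interval $[0,\Delta]$, cover the interfaces of size $n$ by finitely many classes, and extract a limit point---is the same covering-plus-compactness idea as the paper's proof, which implements it by repeated bisection and pigeonhole rather than by an $\epsilon$-net. However, your first step contains a genuine gap as written. From $c_n\leq \frac{C}{\epsilon}\max_j c_{n,r_j,\epsilon}$ you correctly obtain $b\leq \max_j \limsup_{n\to\infty} c_{n,r_j,\epsilon}^{1/n}$ for each fixed $\epsilon$. But each term $\limsup_{n\to\infty} c_{n,r_j,\epsilon}^{1/n}$ is an \emph{upper} bound for $b_{r_j}$ (this quantity decreases to $b_{r_j}$ as $\epsilon\to 0$), not a lower bound, and the net points themselves move as $\epsilon\to 0$. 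So letting $\epsilon\to 0$ only yields $b\leq \lim_{\epsilon\to 0}\sup_r \limsup_{n\to\infty} c_{n,r,\epsilon}^{1/n}$, and the interchange of $\lim_{\epsilon\to 0}$ with $\sup_r$ needed to conclude $b\leq \sup_r b_r$ is precisely the kind of statement the proposition is asking you to prove; it does not come for free. Your part (ii) then presupposes part (i), so the argument as organized is incomplete.

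The fix uses only ingredients you already have, reassembled in the right order. For each $\epsilon=1/m$, pigeonhole along a subsequence of $n$ realizing $\limsup_{n\to\infty}c_n^{1/n}$ gives a net point $r_{j(m)}$ with $\limsup_{n\to\infty} c_{n,r_{j(m)},1/m}^{1/n}\geq b$; pass to a subsequence with $r_{j(m)}\to r^*\in[0,\Delta]$; then for any $\delta>0$ and all large $m$ one has $I_{n,r_{j(m)},1/m}\subseteq I_{n,r^*,\delta}$ for every $n$, whence $\limsup_{n\to\infty} c_{n,r^*,\delta}^{1/n}\geq b$, and letting $\delta\to 0$ gives $b_{r^*}\geq b$ (the reverse inequality being trivial). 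This is exactly your upper-semicontinuity containment, but applied to the maximizing net points directly rather than to a maximizing sequence for $\sup_r b_r$. The paper's proof is the same argument in different clothing: the nested halvings of $[0,\Delta]$ play the role of the shrinking net, the ``at least half, for infinitely many $n$'' counting is the pigeonhole (costing only a factor $2^{-j}$, harmless after taking $n$-th roots), and the common point $r_0$ of the nested intervals is your $r^*$.
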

\begin{proof} 
Notice that there are no (site-)\pint s $P$ with $|\partial P|/|P|>\Delta$, where $\Delta$ is the maximum degree of $G$. Recursively subdivide the interval $I_0:= [0,\Delta]$ into two subintervals of equal length. At each step $j$, one of the two subintervals $I_j$ of $I_{j-1}$ accounts for at least half of the (site-)\pint s $P$ of size $n$ with $|\partial P|/|P|\in I_{j-1}$ for infinitely many $n$. Hence there are at least $2^{-j} c_n$ (site-)\pint s of size $n$ with $|\partial P|/|P|\in I_j$ for infinitely many $n$. By compactness, $[0,\Delta]$ contains an accumulation point $r_0$ of the $I_j, j\in \N$. Notice that for every $\epsilon>0$ we have $\limsup_{n\to\infty} c_{n,r_0,\epsilon}^{1/n}=b$. Taking the limit as $\epsilon$ goes to $0$, we obtain $b= b_{r_0}$, as desired.
\end{proof}

Using \Tr{unique} we can easily obtain some bounds for $b_r$.

\begin{proposition} \label{br ineq}
Let $G$ be a graph satisfying \eqref{isop} or \eqref{quasi-geodesic}. Let $r>0, 0\leq p\leq 1$. Then we have $p(1-p)^r \leq 1/b_r(G)$.
\end{proposition}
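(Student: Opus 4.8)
\textbf{Proof plan for \Prr{br ineq}.}

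The plan is to bound $p(1-p)^r$ from above by comparing it with a suitable percolation probability and then using that almost every percolation instance avoids containing an \pint. Fix $r>0$ and $p\in(0,1)$ (the cases $p=0,1$ being trivial since then $p(1-p)^r=0$). For each $n$, and for $\epsilon>0$ small, consider the event that some \pint\ $P\in I_{n,r,\epsilon}$ occurs in the bond (or site) percolation instance $\omega$. By \Tr{unique}, distinct occurring \pint s meet distinct clusters, so in particular the events ``$P$ occurs'' for $P\in I_{n,r,\epsilon}$ are disjoint, and hence
$$\Pr_p\!\left(\exists\, P\in I_{n,r,\epsilon}\ \text{occurring}\right) = \sum_{P\in I_{n,r,\epsilon}} \Pr_p(P\ \text{occurs}) = \sum_{P\in I_{n,r,\epsilon}} p^{|P|}(1-p)^{|\partial P|}.$$
Since every $P\in I_{n,r,\epsilon}$ has $|P|=n$ and $(r-\epsilon)n\le |\partial P|\le (r+\epsilon)n$, each summand is at least $p^n (1-p)^{(r+\epsilon)n}$ (using $1-p<1$), so the right-hand side is at least $c_{n,r,\epsilon}\cdot \left(p(1-p)^{r+\epsilon}\right)^n$.

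The second ingredient is that the left-hand side above is bounded by $1$, being a probability; more importantly, if $p(1-p)^{r+\epsilon}>1/b_r(G)$ were to hold, then for suitable $\epsilon$ and infinitely many $n$ we would have $c_{n,r,\epsilon}\ge \left(b_r(G)-\delta\right)^n$ for some $\delta>0$ by \Dr{def br}, and the product $c_{n,r,\epsilon}\left(p(1-p)^{r+\epsilon}\right)^n$ would tend to infinity, contradicting that it is at most $1$. Letting $\epsilon\to 0$ then yields $p(1-p)^r\le 1/b_r(G)$. To make this clean I would first choose $\epsilon>0$, extract from the definition of $b_r$ a sequence $n_k\to\infty$ with $c_{n_k,r,\epsilon}^{1/n_k}\to \limsup_n c_{n,r,\epsilon}^{1/n}\ge b_r(G)$, plug into the displayed inequality $\sum_P p^{|P|}(1-p)^{|\partial P|}\le 1$, take $n_k$-th roots and let $k\to\infty$ to get $p(1-p)^{r+\epsilon}\cdot \limsup_n c_{n,r,\epsilon}^{1/n} \le 1$, i.e.\ $p(1-p)^{r+\epsilon}\le 1/\big(\limsup_n c_{n,r,\epsilon}^{1/n}\big)$; since $\limsup_n c_{n,r,\epsilon}^{1/n}\ge b_r(G)$ is not quite what the definition of $b_r$ gives directly (the definition takes $\epsilon\to 0$ of this quantity), I would instead keep the inequality $p(1-p)^{r+\epsilon}\le 1/\big(\limsup_n c_{n,r,\epsilon}^{1/n}\big)$ and then let $\epsilon\to 0$ on both sides, using that $\limsup_n c_{n,r,\epsilon}^{1/n}$ decreases to $b_r(G)$ as $\epsilon\downarrow 0$, to conclude $p(1-p)^r\le 1/b_r(G)$.

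The role of the hypothesis \eqref{isop} or \eqref{quasi-geodesic} is to guarantee that the graph is $1$-ended (so that \Tr{unique} and the whole \pint\ machinery applies) and, more to the point, that there actually exist \pint s separating $o$ from infinity of every sufficiently large size, so that $b_r(G)$ and the sums above are non-degenerate; in particular these isoperimetric-type assumptions ensure that for clusters of any large size the associated \pint\ is finite and the correspondence of \Tr{unique} is available, and also that $b_r(G)\ge 1$ is finite so the statement is meaningful. I would invoke whichever of these facts is needed from \Sr{SecDef} and \Sr{pints site-pints} rather than reprove them.

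The only real subtlety — and the step I would treat most carefully — is the interchange of the disjointness of the occurrence events with the enumeration: one must be sure that the events $\{P\ \text{occurs}\}$ over $P\in I_{n,r,\epsilon}$ are genuinely pairwise disjoint. For a fixed size $n$ this follows from \Lr{scs disjoint} together with \Tr{unique}: two distinct occurring \pint s cannot share a vertex, hence a single percolation instance can exhibit at most one occurring \pint\ among those counted by $I_{n,r,\epsilon}$ (indeed two occurring \pint s that are part of the configuration would either coincide or be vertex-disjoint, but they cannot both have size exactly $n$ and both separate $o$ from infinity while being disjoint — the inner one separates $o$, the outer one would then have to contain it). This is where the precise statement of \Tr{unique}, that an occurring \pint\ meets a \emph{unique} cluster and conversely, does the work. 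Everything else is the elementary estimate $p^n(1-p)^{|\partial P|}\ge \big(p(1-p)^{r+\epsilon}\big)^n$ and a limit.
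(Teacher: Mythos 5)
There is a genuine gap at the heart of your argument: the events $\{P\ \text{occurs}\}$ for distinct $P\in I_{n,r,\epsilon}$ are \emph{not} pairwise disjoint, and the justification you offer for disjointness is incorrect. \Lr{scs disjoint} and \Tr{unique} only tell you that two distinct occurring \pint s are vertex-disjoint and meet different clusters; they do not prevent two (or many) vertex-disjoint \pint s from occurring simultaneously. Indeed the paper's notion of an occurring \mpint\ (\fig{figscv}) is precisely a collection of nested, pairwise disjoint, simultaneously occurring \pint s, each of which separates $o$ from infinity. Your parenthetical claim that two disjoint occurring \pint s ``cannot both have size exactly $n$'' is false: an inner \pint\ can be highly convoluted while an outer one is geometrically larger but has the same edge count, so nothing forces distinct sizes. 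Consequently you cannot conclude that $\sum_{P\in I_{n,r,\epsilon}}\Pr_p(P\ \text{occurs})\le 1$.

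The correct repair is exactly where the hypotheses \eqref{isop} and \eqref{quasi-geodesic} enter --- and this is the second point where your proposal goes astray, since you relegate them to ensuring $1$-endedness and non-degeneracy of $b_r$. The paper uses them to show that the number $N_n$ of \emph{simultaneously} occurring \pint s of size $n$ is at most polynomial in $n$: every occurring \pint\ of size $n$ is connected and separates $o$ from infinity, hence must contain one of the first $ln+1$ vertices of a quasi-geodesic through $o$ (or one of the first $(n/c)^{d/(d-1)}$ vertices of an injective ray, under \eqref{isop}); since distinct occurring \pint s are vertex-disjoint, this gives $N_n\le ln+1$ deterministically. Hence $\sum_{P\in I_{n,r,\epsilon}}\Pr_p(P\ \text{occurs})\le \Ex_p(N_n)\le ln+1$, which after taking $n$th roots is just as good as your intended bound of $1$. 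The rest of your argument --- the estimate $p^n(1-p)^{|\partial P|}\ge\bigl(p(1-p)^{r+\epsilon}\bigr)^n$ and the passage to the limits in $n$ and then $\epsilon$ --- is fine and agrees with the paper.
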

\begin{proof} 
Let us first assume that $G$ satisfies \eqref{quasi-geodesic}. Let $N_n$ be the (random) number of occurring (site-)\pint s $P$ with $|P|=n$ in a percolation instance $\omega$. Consider a quasi-geodesic $X$ containing $o$, and let $X^+$, $X^-$ be its two infinite subpaths starting from $o$. Any occurring (site-)\pint\ $P$ has to contain a vertex $x^+$ in $X^+$, and a vertex $x^-$ in $X^-$ ($x^+$ and $x^-$ may possibly coincide). If $|P|=n$, then $d_G(x^+,x^-)\leq n$, because $P$ is a connected graph. Hence $d_X(x^+,x^-)\leq ln$, implying that $x^+$ is one of the first $ln+1$ vertices of $X^+$. Since occurring (site-)\pint s are disjoint by \Tr{unique}, \begin{align}\label{occurrence}
N_n\leq ln+1
\end{align} 
for every $n$ and any bond (site) percolation instance $\omega$. Therefore, $\mathbb{E}_p(N_n)\leq ln+1$ for every $p\in [0,1]$. We now have 
$ln+1\geq \mathbb{E}_p(N_n)\geq c_{n,r,\epsilon} (p(1-p)^{r+\epsilon})^n$. Taking the $n$th root, and then letting $n$ go to infinity, and $\epsilon$ go to $0$, we obtain $p(1-p)^r \leq 1/b_r$, as desired.

If $G$ does not contain a quasi-geodesic but satisfies the isoperimetric inequality \eqref{isop}, then the assertion can be proved as follows. Since $G$ is locally finite, it contains an $1$-way infinite path $X$ starting from $o$ that does not revisit the same vertex twice. Any occurring (site-)\pint\ $P$ has to contain one of the first $(|P|/c)^{\frac{d}{d-1}}$ vertices of $X$ by \eqref{isop}, hence $N_n\leq (n/c)^{\frac{d}{d-1}}$. Arguing as above, we obtain $p(1-p)^r \leq 1/b_r$.
\end{proof}

Next, we observe that for any fixed $r$, equality in \Prr{br ineq} can occur for at most one value of $p$, which value we can compute:
\begin{proposition} \label{br eq}
Let $G$ be a graph satisfying \eqref{isop} or \eqref{quasi-geodesic}. If $p(1-p)^r = 1/b_r(G)$ for some $r,p$, then $p=\frac1{1+r}$ (and so $r=\frac{1-p}{p}$ and $1/b_r(G)= p(1-p)^\frac{1-p}{p}$).
\end{proposition}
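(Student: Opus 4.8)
The plan is to exploit the fact, established in the proof of \Prr{br ineq}, that for \emph{every} $p\in[0,1]$ we have $p(1-p)^r\leq 1/b_r(G)$, or equivalently $b_r(G)\leq \frac{1}{p(1-p)^r}$. Thus $b_r(G)\leq \min_{p\in(0,1)} \frac{1}{p(1-p)^r}$, and if equality $p(1-p)^r = 1/b_r(G)$ holds for some particular pair $(r,p)$, then that $p$ must in fact realize this minimum, i.e.\ $p$ must maximize the function $g(p):=p(1-p)^r$ over $(0,1)$. So the entire content of the proposition reduces to a one-variable calculus exercise: finding the maximizer of $g(p)=p(1-p)^r$.

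First I would note that $g$ is continuous on $[0,1]$, vanishes at the endpoints $p=0$ and $p=1$, and is strictly positive on the open interval, so it attains a maximum at some interior point. Differentiating, $g'(p) = (1-p)^r - r\,p\,(1-p)^{r-1} = (1-p)^{r-1}\bigl[(1-p) - rp\bigr] = (1-p)^{r-1}(1-(1+r)p)$. On $(0,1)$ the factor $(1-p)^{r-1}$ is positive, so $g'(p)$ has the sign of $1-(1+r)p$, which is positive for $p<\frac{1}{1+r}$ and negative for $p>\frac{1}{1+r}$. Hence $g$ is strictly increasing then strictly decreasing, and its unique maximizer is $p=\frac{1}{1+r}$. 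Since the pair $(r,p)$ in the hypothesis realizes the minimum of $1/g$, it must be that $p=\frac{1}{1+r}$.

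From $p=\frac{1}{1+r}$ the remaining assertions are immediate algebra: solving for $r$ gives $1+r = 1/p$, hence $r = \frac1p - 1 = \frac{1-p}{p}$; and substituting back, $1/b_r(G) = g(p) = p(1-p)^r = p(1-p)^{\frac{1-p}{p}}$.

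I do not anticipate a genuine obstacle here; the only thing to be careful about is making the logical direction explicit, namely that \Prr{br ineq} gives an inequality valid \emph{uniformly in $p$}, so that an equality case for one $p$ forces that $p$ to be optimal. Everything else is the elementary maximization above.
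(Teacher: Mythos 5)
Your argument is correct and follows essentially the same route as the paper's proof: both deduce from \Prr{br ineq} that the equality case forces $p$ to maximize $p(1-p)^r$, and then identify the maximizer $p=\frac{1}{1+r}$ by elementary calculus. Your sign analysis of $g'$ even makes the uniqueness of the maximizer slightly more explicit than the paper's version.
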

\begin{proof} 
Fix $r$ and let $M:= \max_{p\in [0,1]} p(1-p)^r$. If $p_0(1-p_0)^r = 1/b_r$ is satisfied for some $p_0\in [0,1]$, then $p_0$ must attain $M$ by \Prr{br ineq}, that is, we have $M= p_0(1-p_0)^r$. Since the function $f(p)= p(1-p)^r$ vanishes at the endpoints $p=0,1$, we deduce that $f'(p_0)=0$. By elementary calculus, $f'(p)= (1-p)^r - rp(1-p)^{r-1}$, from which we obtain $r= \frac{1-p_0}{p_0}$ and $p_0=\frac1{1+r}$. 
\end{proof}

\medskip
Combining \Prr{br ineq} and \Prr{br eq} we obtain 
\labtequ{r ineq}{$b_r \leq f(r):= \frac{(1+r)^{1+r}}{r^r}.$}

\comment{
\begin{figure}[htbp] 
\centering
\noindent
\begin{overpic}[width=.6\linewidth]{br_vs_f(r).pdf}
\put(94,18){$b_r$}
\put(98,-1){$r$}
\put(54,-1){$r(p_c)$}
\put(21,-1){$r(1-p_c)$}
\put(94,48){$f(r)$}
\end{overpic}
\caption{\small The graph of $b_r$ (depicted in black colour if colour is shown) lies below the graph of $f$ (depicted in blue colour if colour is shown).}
\label{b_r vs f(r)}
\end{figure}
}

Motivated by \Prr{br eq}, we define the functions 
$$p(r):= \frac1{1+r} \text{ and } r(p):= \frac{1-p}{p}.$$ 
These functions are 1-1, strictly monotone decreasing, and the inverse of each other.

\

Recall that $N_n$ denotes the number of occurring \mpint s $P$ with $|P|=n$. The next result says that equality is achieved in \Prr{br eq} (for some $r$) exactly for those $p$ for which exponential decay in $n$ of $\Ex_p(N_n)$ fails.

\comment{
\begin{proposition} \label{no exp dec}
Let $G$ be a graph satisfying \eqref{isop} or \eqref{quasi-geodesic} and $p\in (0,1)$. Then $\Ex_p(N_n)$ fails to decay exponentially in $n$ if and only if 
$b_{r(p)}(G) = 1/p(1-p)^{r(p)}$ (that is, if and only if equality is achieved in \Prr{br eq}). 
\end{proposition}
\begin{proof} 
The backward implication is straightforward by the definitions.
\medskip

For the forward implication, suppose to the contrary that $$b_{r(p)} < 1/p(1-p)^{r(p)}.$$ The definition of $b_r$ implies that there are $\epsilon,\delta>0$ such that $$c_{n,r(p),\epsilon} p^n (1-p)^{n(r(p)-\epsilon)}\leq (1-\delta)^n$$ for all but finitely $n$. Hence, if we denote by $N_{n,r(p),\epsilon}$ the (random) number of occurring (site-)\mpint s $P$ with $|P|=n$ and $(r(p)-\epsilon)n \leq |\partial P| \leq (r(p)+\epsilon)n$, then for every large enough $n$, $$\Ex_p(N_{n,r(p),\epsilon})\leq c_{n,r(p),\epsilon} p^n (1-p)^{n(r(p)-\epsilon)}\leq (1-\delta)^n,$$ which implies the exponential decay in $n$ of $\Ex_p(N_{n,r(p),\epsilon})$.

On the other hand, we claim that $\Ex_p(N_n-N_{n,r(p),\epsilon})$ always decays exponentially in $n$. Indeed, consider the function $g(q,r)=q(1-q)^r$. Notice that for every fixed $r$ the function $g_r(q):=g(q,r)$ is maximised at $\frac{1}{1+r}$ and is strictly monotone on the intervals $[0,\frac{1}{1+r}]$ and $[\frac{1}{1+r},1]$. Recall that $p=\frac{1}{1+r(p)}$, and define 
$$s=s(p,\epsilon):=\frac{1}{1+r(p)+\epsilon}$$ and $$S=S(p,\epsilon):=\frac{1}{1+r(p)-\epsilon}.$$ It follows that there is a constant $0<c=c(p,\epsilon)<1$ such that $g(p,r(p)+\epsilon)\leq c g(s,r(p)+\epsilon)$ and $g(p,r(p)-\epsilon)\leq c g(S,r(p)-\epsilon)$, because $s<p<S$. Moreover, we have $$\Big(\frac{1-p}{1-s}\Big)^r \leq\Big(\frac{1-p}{1-s}\Big)^{r(p)+\epsilon}$$ whenever $r\geq r(p)+\epsilon$, and $$\Big(\frac{1-p}{1-S}\Big)^r \leq\Big(\frac{1-p}{1-S}\Big)^{r(p)-\epsilon}$$ whenever $r\leq r(p)-\epsilon$. This implies that $g(p,r)\leq c g(s,r)$ for every $r\geq r(p)+\epsilon$, and $g(p,r)\leq c g(S,r)$ for every $r\leq r(p)-\epsilon$. Summing over all possible \\ (site-)\mpint s $P$ with $|P|=n$ and $|\partial P|> (r+\epsilon)n$ or $|\partial P|< (r-\epsilon)n$ gives $$\Ex_p(N_n-N_{n,r(p),\epsilon})\leq c^n(\Ex_{s}(N_n)+\Ex_{S}(N_n)).$$ Since both $\Ex_{s}(N_n),\Ex_{S}(N_n)\leq ln+1$, we conclude that $E_p(N_n-N_{n,r(p),\epsilon})$, and hence $E_p(N_n)$, decays exponentially in $n$, which contradicts our assumption. Therefore, $b_{r(p)} = 1/p(1-p)^{r(p)}$.
\end{proof}
}

\begin{proposition} \label{no exp dec}
Let $G$ be a graph satisfying \eqref{isop} or \eqref{quasi-geodesic} and $p\in (0,1)$. Then $\Ex_p(N_n)$ fails to decay exponentially in $n$ if and only if 
$b_{r(p)}(G) = 1/p(1-p)^{r(p)}$ (that is, if and only if equality is achieved in \Prr{br eq}). 
\end{proposition}

For the proof of this we will make use of a large-deviation bound for $\Ex_p(N_n)$. In the percolation setup of \Prr{no exp dec}, we denote by $N_{n,r(p),\epsilon}$ the (random) number of occurring (site-)\mpint s $P$ with $|P|=n$ and\\ $(r(p)-\epsilon)n \leq |\partial P| \leq (r(p)+\epsilon)n$. The following lemma says that for any $p\in (0,1)$, most occurying \mpint s have a volume-to-surface ratio close to the value $r(p)$.

\begin{lemma}\label{large deviation}
Let $G$ be a graph satisfying \eqref{isop} or \eqref{quasi-geodesic} and $p\in (0,1)$. For every $\epsilon>0$, there exists $\delta>0$ such that $\Ex_p(N_n-N_{n,r(p),\epsilon})\leq e^{-\delta n}$ for every $n\in \N$.
\end{lemma}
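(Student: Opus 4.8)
The plan is to compare, for a fixed \mpint\ $P$ of total size $n$ and total boundary size $m$, its $p$-weight $p^n(1-p)^m$ with the weight it would get at two auxiliary edge-probabilities tuned to the two ends of the ratio window. Write $c_{n,m}$ for the number of (bond- or site-)\mpint s $P$ with $|P|=n$ and $|\partial P|=m$, so that $\Ex_q(N_n)=\sum_m c_{n,m}q^n(1-q)^m$ for every $q$, and
$$\Ex_p(N_n-N_{n,r(p),\epsilon}) \;=\; \sum_{m<(r(p)-\epsilon)n} c_{n,m}\,p^n(1-p)^m \;+\; \sum_{m>(r(p)+\epsilon)n} c_{n,m}\,p^n(1-p)^m .$$
The workhorse is the function $g(q,r):=q(1-q)^r$; as already computed in the proof of \Prr{br eq}, for fixed $r>0$ it vanishes at $q=0,1$ and has a unique maximum on $[0,1]$ at $q=\tfrac1{1+r}$, being strictly increasing on $[0,\tfrac1{1+r}]$ and strictly decreasing on $[\tfrac1{1+r},1]$. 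Since $p=\tfrac1{1+r(p)}$, the number $p$ is exactly the maximiser of $g(\cdot,r(p))$; for $r>r(p)$ it sits strictly to the right of the maximiser of $g(\cdot,r)$, and for $r<r(p)$ strictly to the left.

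Next I would introduce $s:=\tfrac1{1+r(p)+\epsilon}$ and $S:=\tfrac1{1+r(p)-\epsilon}$ (when $r(p)-\epsilon\le 0$ the first sum above is vacuous and $S$ is not needed), noting $s<p<S$; these are the maximisers of $g(\cdot,r(p)+\epsilon)$ and $g(\cdot,r(p)-\epsilon)$ respectively. The key elementary claim is that there is $c=c(p,\epsilon)<1$ with $g(p,r)\le c\,g(s,r)$ for all $r\ge r(p)+\epsilon$ and $g(p,r)\le c\,g(S,r)$ for all $0<r\le r(p)-\epsilon$. For the first: $g(p,r)/g(s,r)=\tfrac ps\,(\tfrac{1-p}{1-s})^{r}$, and since $s<p$ the base $\tfrac{1-p}{1-s}$ lies in $(0,1)$, so this ratio is strictly decreasing in $r$; hence on $r\ge r(p)+\epsilon$ it is maximised at $r=r(p)+\epsilon$, where it equals $g(p,r(p)+\epsilon)/g(s,r(p)+\epsilon)<1$ because $s$ is the unique maximiser of $g(\cdot,r(p)+\epsilon)$ and $p\ne s$. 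The second is symmetric: $\tfrac{1-p}{1-S}>1$, so $g(p,r)/g(S,r)$ is increasing in $r$ and is maximised on $r\le r(p)-\epsilon$ at the endpoint, again with value $<1$. Take $c$ to be the larger of the two constants.

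To conclude, substitute $r=m/n$: for $m>(r(p)+\epsilon)n$ we get $p^n(1-p)^m=g(p,m/n)^n\le c^n\,g(s,m/n)^n=c^n s^n(1-s)^m$, and likewise $p^n(1-p)^m\le c^n S^n(1-S)^m$ for $m<(r(p)-\epsilon)n$. Summing over the relevant $m$ and then enlarging each sum to run over all $m$ gives
$$\Ex_p(N_n-N_{n,r(p),\epsilon}) \;\le\; c^n\Big(\sum_m c_{n,m}s^n(1-s)^m+\sum_m c_{n,m}S^n(1-S)^m\Big) \;=\; c^n\big(\Ex_s(N_n)+\Ex_S(N_n)\big).$$
It then remains to bound $\Ex_q(N_n)$ sub-exponentially for $q=s,S$. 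For single \pint s this is immediate from \eqref{occurrence} ($\Ex_q(N_n^\circ)\le ln+1$ for every $q$, since \eqref{occurrence} holds instance-wise); for \mpint s one adds the observation that the occurring \pint s in a given instance are pairwise disjoint and nested, so the generating polynomial $\prod_i(1+x^{|Q_i|})$ over occurring \pint s $Q_i$ is deterministically at most $\exp\big(\sum_k (lk+1)x^k\big)$ for $x\in(0,1)$, whence $\Ex_q(N_n)\le e^{o(n)}$. Either way $c^n\Ex_q(N_n)\le e^{-\delta n}$ for a suitable $\delta>0$ once $n$ is large, and shrinking $\delta$ absorbs the finitely many small $n$ (for which the left-hand side is a finite sum anyway).

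The main obstacle is not any single step — each is elementary calculus or routine bookkeeping — but getting the comparison set up so that the threshold ratios $r(p)\pm\epsilon$ are the \emph{worst cases} of the corresponding tails: this is exactly what the correctly-signed monotonicity of $(\tfrac{1-p}{1-s})^r$ and $(\tfrac{1-p}{1-S})^r$ buys, and it is what lets a single constant $c<1$ work uniformly over the entire tail rather than just near the window. The only other points needing care are the degenerate case $r(p)\le\epsilon$ (lower tail empty), and keeping the bound on $\Ex_q(N_n)$ sub-exponential in the \mpint\ setting rather than only for \pint s.
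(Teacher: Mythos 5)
Your proof is correct and follows essentially the same route as the paper's: the same comparison function $g(q,r)=q(1-q)^r$, the same auxiliary parameters $s=\tfrac1{1+r(p)+\epsilon}$ and $S=\tfrac1{1+r(p)-\epsilon}$, the same monotonicity-in-$r$ argument showing the window endpoints are the worst cases, and the same final bound $\Ex_p(N_n-N_{n,r(p),\epsilon})\le c^n(\Ex_s(N_n)+\Ex_S(N_n))$. Your extra care with the degenerate case $r(p)\le\epsilon$ and with keeping the bound on $\Ex_q(N_n)$ sub-exponential for \mpint s (the paper simply invokes $\le ln+1$, which strictly speaking is the single-\pint\ count) is a welcome tightening but not a different argument.
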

\begin{proof}
Consider the function $g(q,r)=q(1-q)^r$. Notice that for every fixed $r$ the function $g_r(q):=g(q,r)$ is maximised at $\frac{1}{1+r}$ and is strictly monotone on the intervals $[0,\frac{1}{1+r}]$ and $[\frac{1}{1+r},1]$. Recall that $p=\frac{1}{1+r(p)}$, and define 
$$s=s(p,\epsilon):=\frac{1}{1+r(p)+\epsilon}$$ and $$S=S(p,\epsilon):=\frac{1}{1+r(p)-\epsilon}.$$ It follows that there is a constant $0<c=c(p,\epsilon)<1$ such that $g(p,r(p)+\epsilon)\leq c g(s,r(p)+\epsilon)$ and $g(p,r(p)-\epsilon)\leq c g(S,r(p)-\epsilon)$, because $s<p<S$. Moreover, we have $$\Big(\frac{1-p}{1-s}\Big)^r \leq\Big(\frac{1-p}{1-s}\Big)^{r(p)+\epsilon}$$ whenever $r\geq r(p)+\epsilon$, and $$\Big(\frac{1-p}{1-S}\Big)^r \leq\Big(\frac{1-p}{1-S}\Big)^{r(p)-\epsilon}$$ whenever $r\leq r(p)-\epsilon$. This implies that $g(p,r)\leq c g(s,r)$ for every $r\geq r(p)+\epsilon$, and $g(p,r)\leq c g(S,r)$ for every $r\leq r(p)-\epsilon$. Summing over all possible \\ (site-)\mpint s $P$ with $|P|=n$ and $|\partial P|> (r+\epsilon)n$ or $|\partial P|< (r-\epsilon)n$ gives $$\Ex_p(N_n-N_{n,r(p),\epsilon})\leq c^n(\Ex_{s}(N_n)+\Ex_{S}(N_n)).$$ Since both $\Ex_{s}(N_n),\Ex_{S}(N_n)\leq ln+1$, we conclude that $E_p(N_n-N_{n,r(p),\epsilon})$ decays exponentially in $n$.
\end{proof}

\begin{proof}[Proof of \Prr{no exp dec} ]
The backward implication is straightforward by the definitions.
\medskip

For the forward implication, suppose to the contrary that $$b_{r(p)} < 1/p(1-p)^{r(p)}.$$ The definition of $b_r$ implies that there are $\epsilon,\delta>0$ such that $$c_{n,r(p),\epsilon} p^n (1-p)^{n(r(p)-\epsilon)}\leq (1-\delta)^n$$ for all but finitely $n$. Hence for every large enough $n$, $$\Ex_p(N_{n,r(p),\epsilon})\leq c_{n,r(p),\epsilon} p^n (1-p)^{n(r(p)-\epsilon)}\leq (1-\delta)^n,$$ which implies the exponential decay in $n$ of $\Ex_p(N_{n,r(p),\epsilon})$. On the other hand, $\Ex_p(N_n-N_{n,r(p),\epsilon})$ decays exponentially in $n$ by \Lr{large deviation}. Therefore, $\Ex_p(N_n)$ decays exponentially in $n$ too.
\end{proof}

Let $S_o$ denote the (site-)\pint\ of the cluster $C_o$ of $o$ if $C_o$ is finite, and $S_o=\emptyset$ otherwise. We can now easily deduce that the statement of \Prr{no exp dec} holds for $P_p( |S_o|=n )$ in place of $\Ex_p(N_n)$, as stated in \Tr{S_o}, which we repeat here for convenience:

\begin{theorem}\label{S_o II}
Let $G\in \mathcal{S}$. Then for every $p\in (0,1)$, the cluster size distribution $\Pr_p( |S_o|=n )$ fails to decay exponentially in $n$ if and only if\\ $b_{r(p)}(G)= 1/p(1-p)^{r(p)} = f(r(p))$.\footnote{That is, if and only if equality is achieved in \Prr{br eq}.} 
\end{theorem}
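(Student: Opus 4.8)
The plan is to derive the theorem from \Prr{no exp dec} by showing that $\Ex_p(N_n)$ and $\Pr_p(|S_o|=n)$ have the same exponential rate, i.e.\ differ only by sub-exponential factors. Recall first that $p(1-p)^{r(p)}=1/f(r(p))$ by the computation in \Prr{br eq}, so the two forms of the conclusion coincide, and that \eqref{r ineq} gives $b_{r(p)}\le f(r(p))$ always, so ``$b_{r(p)}\ne f(r(p))$'' is the same as ``$b_{r(p)}<f(r(p))$''. Write $M_n$ for the number of occurring \pint s of size $n$, and $M_{n,\epsilon}$ for the number of those also satisfying $(r(p)-\epsilon)n\le|\partial P|\le(r(p)+\epsilon)n$. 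By \Tr{unique}, if $C_o$ is finite with $|S_o|=n$ then $S_o$ is an occurring \pint\ of size $n$ (hence also an occurring \mpint\ of size $n$), so $\one[|S_o|=n]\le M_n\le N_n$. For the implication ``$\Pr_p(|S_o|=n)$ fails to decay exponentially $\Rightarrow b_{r(p)}=f(r(p))$'' we argue by contraposition: if $b_{r(p)}<f(r(p))$ then $\Ex_p(N_n)$ decays exponentially by \Prr{no exp dec}, and since $\Pr_p(|S_o|=n)\le\Ex_p(N_n)$, so does $\Pr_p(|S_o|=n)$.

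For the reverse implication, assume $b_{r(p)}=f(r(p))$. Because $\limsup_n c_{n,r(p),\epsilon}^{1/n}$ is monotone in $\epsilon$ with limit $b_{r(p)}$ as $\epsilon\to 0$, we have $\limsup_n c_{n,r(p),\epsilon}^{1/n}\ge f(r(p))$ for every $\epsilon>0$. Each \pint\ counted by $c_{n,r(p),\epsilon}$ occurs with probability $p^{n}(1-p)^{|\partial P|}\ge (p(1-p)^{r(p)})^{n}(1-p)^{\epsilon n}=f(r(p))^{-n}(1-p)^{\epsilon n}$, so $\Ex_p(M_{n,\epsilon})\ge c_{n,r(p),\epsilon}\,f(r(p))^{-n}(1-p)^{\epsilon n}$, which for infinitely many $n$ is at least $\beta^{n}$ with $\beta=\beta(\epsilon)\to 1$ as $\epsilon\to 0$. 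Next, exactly as in the proof of \Prr{br ineq}, occurring \pint s of a fixed size are pairwise disjoint (\Tr{unique}) and each meets an initial segment of bounded length (in the quasi-geodesic case) or polynomial length (under \eqref{isop}) of a fixed one-way infinite path from $o$; hence $M_{n,\epsilon}\le M_n\le \mathrm{poly}(n)$, and therefore $\Pr_p(M_{n,\epsilon}\ge 1)\ge \Ex_p(M_{n,\epsilon})/\mathrm{poly}(n)\ge \beta^{n}/\mathrm{poly}(n)$ for infinitely many $n$.

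Now I localise this event. Suppose $M_{n,\epsilon}\ge 1$, witnessed by an occurring \pint\ $P$ with $|P|=n$ meeting a cluster $C$. By \Tr{unique}, $C$ is finite, $\partial C$ separates $o$ from infinity, and $P\subseteq E(C)$; it follows that $o$ lies in the finite region $\overline C$ surrounded by $C$, whose exterior edge boundary is contained in $\partial P$ and so has size $O(n)$, whence the isoperimetric inequality \eqref{isop} gives $|\overline C|\le\rho(n):=O(n^{d/(d-1)})$. In particular $d_G(o,v)\le\rho(n)$ for every $v\in C$; and since $C_v=C$ while $\partial P$ also separates $v$ from infinity, $P$ is an occurring \pint\ with respect to the base vertex $v$ meeting $C_v$, so $P=S_v$ and $|S_v|=n$ by \Tr{unique} applied at $v$. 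Hence $\{M_{n,\epsilon}\ge 1\}\subseteq\bigcup_{v\in B(o,\rho(n))}\{|S_v|=n\}$. As $G\in\cs$ has polynomial growth, $|B(o,\rho(n))|=\mathrm{poly}(n)$, and by quasi-transitivity $\Pr_p(|S_v|=n)$ depends only on the (finitely many) orbit of $v$; taking a union bound and using that $\limsup_n$ commutes with the finite maximum over orbits, we get $\max_v\limsup_n\Pr_p(|S_v|=n)^{1/n}\ge\limsup_n\Pr_p(M_{n,\epsilon}\ge1)^{1/n}\ge\beta(\epsilon)$. Letting $\epsilon\to0$ shows $\Pr_p(|S_v|=n)$ fails to decay exponentially for vertices $v$ in some orbit; since the exponential decay rate of $\Pr_p(|S_v|=n)$ is the same for all $v$ (trivially for the vertex-transitive $\Z^d$ and $\mathbb{T}^d$, and for \qtl s by joining $v$ to a vertex of any prescribed orbit along a path of bounded length, which changes the interface size by a bounded amount), the same holds for $o$.

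The step I expect to be the main obstacle is the localisation in the last paragraph: converting a purely combinatorial lower bound on the number of occurring \pint s around $o$ into a lower bound on $\Pr_p(|S_o|=n)$. This hinges on \eqref{isop} to confine such a \pint\ together with the cluster it meets to a polynomial-size neighbourhood of $o$, and on \Tr{unique} to identify that \pint\ with the interface $S_v$ of a genuine cluster at a nearby vertex $v$. The final reduction from ``some orbit'' to ``$o$'' is a secondary, routine point about quasi-transitivity.
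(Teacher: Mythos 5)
Your forward direction is correct and is exactly what the paper does (monotonicity $\Pr_p(|S_o|=n)\le\Ex_p(N_n)$ plus \Prr{no exp dec}). For the reverse direction you take a genuinely different route. The paper proves the two-sided comparison $p^M\,\Ex_p(N_n)\le (ln+1)\,\Pr_p(|S_o|=n)$ directly: it translates each \pint\ $P$ by a lattice automorphism $ktv_1$ along the periodic quasi-geodesic $X$ so that its first intersection with $X^+$ lands among the first $M$ vertices, then opens the bounded initial segment of $X^+$; \Tr{unique} then forces $S_o=P+ktv_1$ on that event, at a cost of only $p^M$. You instead run a first-moment argument ($\Ex_p(M_{n,\epsilon})\ge\beta(\epsilon)^n$ along a subsequence), divide by the deterministic polynomial bound $M_n\le ln+1$ to get $\Pr_p(M_{n,\epsilon}\ge1)\ge\beta^n/\mathrm{poly}(n)$, and then localise: the occurring \pint\ equals $S_v$ for every $v$ in the cluster it meets, and the isoperimetric inequality confines such $v$ to a polynomial ball around $o$. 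That localisation is essentially sound (the identification $P=S_v$ via \Tr{unique}, and the containment of the exterior edge boundary of $\overline C$ in $\partial P$, both check out, though the latter deserves a line of proof), and for the vertex-transitive members $\Z^d$ and $\mathbb{T}^d$ of $\cs$ your argument is complete, since there $\Pr_p(|S_v|=n)=\Pr_p(|S_o|=n)$.

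The gap is the final transfer from ``$\Pr_p(|S_v|=n)$ fails to decay exponentially for $v$ in some orbit'' back to the base vertex $o$ in the \qtl\ case. Your justification --- joining $v$ to a prescribed vertex by a bounded path, ``which changes the interface size by a bounded amount'' --- is not correct as stated. Opening a bounded set of edges can merge $C_v$ with an arbitrarily large neighbouring cluster; more importantly, if $o$ lies outside the finite component $D$ enclosed by $\partial S_v$, then any $o$--$v$ path must cross $\partial S_v$, so forcing it open is incompatible with the event $\{|S_v|=n\}$, and $S_v$ is not even an \pint\ with respect to $o$. So one cannot simply ``re-root'' by a local modification whose location is fixed in advance of the configuration. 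The standard repair is precisely the paper's translation/anchoring device (move the \pint, not the base point, using a lattice translation and the periodicity of $X$), which means your route has deferred rather than avoided the key idea. As written, the proof is complete only for $\Z^d$ and $\mathbb{T}^d$; for \qtl s the last step needs the paper's argument or an equivalent substitute.
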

\begin{proof}
Let $X$ be a quasi-geodesic containing $o$ such that $X+tv_1=X$ for some $t\in \N$, and let $X^+$ be one of the two infinite subpaths of $X$ starting at $o$. If $G$ is $\Z^d$ or $\mathbb{T}^d$, we just let $X$ be a geodesic. Notice that any (site-)\pint\ $P$ meets $X^+$ at some vertex $x^+$. Using a multiple $ktv_1$ of $tv_1$ for some integer $k$, we can translate $x^+$ to one of the first $M$ vertices of $X^+$, for some fixed $M>0$. It is not hard to see that $P+ktv_1$ is a (site-)\pint\ of $o$, i.e.\ it still separates $o$ from infinity. On the event $A=A(P):=\{\text{$P+ktv_1$ occurs}\} \cap\{\text{the subpath of $X^+$ between $o$ and $x^+ +ktv_1$ is open}\}$, we have $S_o=P+ktv_1$. Moreover, $$p^M \Pr_p(P \text{ occurs})\leq \Pr(A).$$ Summing over all (site-)\pint s of size $n$ with the property that the first vertex of $X^+$ they contain is $x^+$, we obtain
$$ p^M \sum \Pr_p(P \text{ occurs})\leq \Pr_p( |S_o|=n ),$$ where
the sum ranges over all such (site-)\pint s. Since there are at most $ln+1$ choices for the first vertex of $X^+$, summing over all possible $x^+$ we obtain
$$p^M \mathbb{E}_p(N_n)\leq (ln+1)\Pr_p( |S_o|=n ).$$ 
On the other hand, clearly $$\Pr_p( |S_o|=n )\leq \mathbb{E}_p(N_n).$$
Therefore, $\Pr_p(|S_o|=n)$ decays exponentially if and only if $\Ex_p(X_n)$ does. The desired assertion follows now from \Prr{no exp dec}.
\end{proof}

\section{Duality}\label{Duality}

The main aim of this section is the proof of \Tr{dual br} (\Tr{dual site}), and an analogous statement for planar bond percolation (\Tr{dual bond}).
In this section we study the properties of both \pint s and site-\pint s of graphs in $\mathcal{S}$. 

\medskip
If $G\in \mathcal{S}$ satisfies \eqref{triangulation}, we say that $(P,\partial P)$ is an \defi{\ipint} of $G$ if $(\partial P,P)$ is a site-\pint\ of $G$. We define $b_r^*$ similarly to $b_r$, except that we now count \ipint s instead of site-\pint s. Note that both $P$ and $\partial P$ span connected graphs in this case. Since this operation inverts the surface-to-volume ratio, we have
\labtequ{b star}{$b_r^*= b_{1/r}^r$.}
If $G$ is a \qtl, we say that $(P,\partial P)$ is an \ipint\ of $G$ if $(\partial P^*,P^*)$ is an \pint\ of $G^*$. Again define $b_r^*(G)$ similarly to $b_r(G)$, except that we now count \ipint s in the dual lattice $G^*$. Then \eqref{b star} still holds in this case. 

The main results of this section are:

\begin{theorem} \label{dual site}
Consider a graph $G\in \mathcal{S}$ satisfying \eqref{triangulation}. Then for the site-\pint s in $G$ we have $b_r(G)= (b_{1/r}(G))^r$ for every $r>0$.
\end{theorem}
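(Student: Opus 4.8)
The plan is to reduce the identity $b_r=(b_{1/r})^r$ to the single assertion $b_r=b_r^*$, where $b_r^*$ denotes the exponential growth rate of \ipint s at surface-to-volume ratio $r$. Indeed, an \ipint $(P,\partial P)$ with $|P|=n$ and $|\partial P|\approx rn$ is by definition the same as a site-\pint $(\partial P,P)$ of volume $\approx rn$ and surface $n$, and there are $\approx(b_{1/r})^{rn}$ of those; taking $n$-th roots gives $b_r^*=(b_{1/r})^r$, which is \eqref{b star}. So it is enough to show that site-\pint s and \ipint s have the same exponential growth rate at each fixed ratio $r$.

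To treat the two families uniformly I would use \Prr{connected} to describe them by the same combinatorial data. In the triangulated case a site-\pint of $o$ is the same thing as a finite \emph{good region} $R$ --- connected, containing $o$, with $1$-ended complement --- recorded as $(A,B)$, where $A$ is the set of vertices of $R$ having a neighbour outside $R$ and $B$ the set of vertices outside $R$ having a neighbour in $R$; an \ipint is the same datum recorded as $(B,A)$. I would record the following facts, all consequences of \eqref{triangulation}: both $A$ and $B$ span connected graphs and every vertex of one has a neighbour in the other (as in the proof of \Prr{connected}); $R$ is recovered from $A$ alone, and from $B$ alone, as the union of the bounded components of the complement (together with the set itself where appropriate); and, by Remark~\ref{t/2}, $A$ and $B$ both lie in a common annulus of width $\le t/2$. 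Writing $M(a,b)$ for the number of good regions with $|A|\approx a$ and $|B|\approx b$, these identifications turn the desired equality $b_r=b_r^*$ into the statement that $M(n,rn)$ and $M(rn,n)$ have the same exponential growth rate in $n$: the exponential region-count is symmetric under exchanging the inner and outer boundary sizes.

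To prove this symmetry I would group good regions according to the \emph{shape} of the edge cut $F:=E(R,R^c)$ separating them from infinity (a region is determined by this cut, and the cut is, by \eqref{triangulation} and Remark~\ref{t/2}, a connected object of size $\ell$ comparable to both $a$ and $b$). For a fixed shape, the number of good regions with that cut-shape surrounding $o$ equals, up to a factor polynomial in $\ell$, the size of the enclosed finite region; and since that region has vertex-boundary of size $O(\ell)$, the isoperimetric inequality \eqref{isop} bounds its size by $\ell^{O(1)}$. Hence $M(a,b)$ equals, up to a sub-exponential factor, the number of cut-shapes of ``type'' $(a,b)$. The point is now that a cut-shape carries two sides symmetrically, so relabelling them matches cut-shapes of type $(a,b)$ with cut-shapes of type $(b,a)$; the only asymmetry is that, as an interface around $o$, one side must enclose a finite region, and turning the other side finite requires ``capping off'' the cut far away --- an operation that, using quasi-transitivity and $1$-endedness, changes $\ell$ by $o(\ell)$ and multiplies the count by at most $e^{o(\ell)}$. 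Combining these estimates yields $M(n,rn)=e^{\pm o(n)}M(rn,n)$, hence $b_r=b_r^*$. (It may be technically smoother to run the whole argument with multi-interfaces in place of interfaces and appeal to \Lr{pint mpint} at the end, since re-cutting a connected interface naturally produces a multi-interface.)

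The step I expect to be the main obstacle is exactly this last surgery: making the relabelling of the two sides of the cut into an honest near-bijection between interfaces around $o$ --- in particular showing that capping off the ``wrong'' side costs only a sub-exponential factor and changes the interface length, and hence the surface-to-volume ratio $r$, by a negligible amount. This is where the hypothesis \eqref{triangulation} (via Remark~\ref{t/2}) is essential: it forces the interface into a thin annulus, so that $|A|$, $|B|$ and $\ell$ stay of the same order and the ratio is genuinely preserved. Without a basis of bounded-length cycles the two boundary sizes of a region need not be comparable and the duality fails, in accordance with its failure for lattice animals.
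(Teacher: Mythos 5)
Your reduction of the theorem to the single identity $b_r=b_r^*$ via \eqref{b star} is exactly the paper's starting point, and your description of site-\pint s and \ipint s as the two labellings of the same annular pair $(A,B)$ is correct. The gap is in the claimed symmetry of the count. The two sides of the cut are not interchangeable: one encloses $o$ and is finite, the other is infinite, and there is no relabelling involution on interfaces around $o$ that swaps $|A|$ and $|B|$. A region whose inner layer has size $rn$ and outer layer size $n$ is a genuinely different region, and producing one from a region of type $(n,rn)$ is the entire content of the proof, not a boundary adjustment to be ``capped off''. Concretely, the surgery you defer to the end fails as sketched: to turn the infinite side into a finite region one must enclose the interface by a cycle $C$, and for a general interface of size $n$ the smallest such cycle has length comparable to the diameter of the interface, which can be $\Theta(n)$; this changes the outer layer by a constant factor and destroys the ratio $r$. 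The paper's proof therefore begins with a substantial preprocessing step, \Lr{o box site} (that $\widetilde b_r=b_r$), showing that interfaces whose bounding box has size $o(n)$ already achieve the growth rate $b_r$; only for these is the capping cycle of negligible length. The width-$t/2$ annulus from Remark~\ref{t/2} controls the thickness of the cut but says nothing about this global diameter issue, so it cannot substitute for the small-box reduction.

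The second structural difference is that you aim for a two-sided near-bijection, which is more than is needed and harder than what the paper does. After the small-box reduction, the paper constructs from each site-\pint\ $P$ a single \ipint\ $Q$ (the inner-interface of the cut-open annulus bounded by $P$, a connecting path $\Pi$ of length $O(n^{1/4})$ through a good boundary vertex supplied by \Lr{connection}, and the far cycle $C$), checks that $|Q|$ and $|\partial Q|$ match $|P|$ and $|\partial P|$ up to $o(n)$, and that the map $P\mapsto Q$ has subexponential fibres. This yields only the one inequality $b_r^*\ge b_r$, valid for all $r$; equality then follows purely formally from the chain $b_r^*\ge b_r=(b^*_{1/r})^r\ge(b_{1/r})^r=b_r^*$, using \eqref{b star} at both $r$ and $1/r$. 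If you adopt this bootstrap you can discard the reverse direction of your bijection entirely; what cannot be discarded is the small-box lemma and the explicit annulus construction, neither of which your proposal supplies.
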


\begin{theorem} \label{dual bond}
Consider a \qtl\ $G$. Then for the \pint s in $G$ and $G^*$ we have $b_r(G)= (b_{1/r}(G^*))^r$ for every $r>0$.
\end{theorem}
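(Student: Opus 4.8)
The plan is to derive both \Tr{dual site} and \Tr{dual bond} from a single combinatorial statement: that for each $r>0$ the \pint s and the \ipint s of $G$ have the same exponential growth rate. Indeed, \eqref{b star} already rewrites $b_r^*(G)$ --- the growth rate of \ipint s, indexed by $|P|$ --- in the form $b_{1/r}^r$, where the inner quantity is $b_{1/r}(G)$ in the setting of \Tr{dual site} (there \ipint s live in $G$ itself, via \Prr{connected}) and $b_{1/r}(G^*)$ in the setting of \Tr{dual bond} (there the defining \pint\ lives in the dual lattice $G^*$). So in either case the theorem is equivalent to
\[
b_r(G)=b_r^*(G)\qquad\text{for every }r>0,
\]
and, by \Lr{pint mpint} (applied to $G$ and, in the bond case, to $G^*$), I may just as well prove this for \mpint s and their \ipint\ analogues, which frees the bookkeeping from connectivity constraints. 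The task thus becomes: show that the \pint s and the \ipint s of $o$ with volume $n$ and surface in $[(r-\eps)n,(r+\eps)n]$ are equinumerous up to a factor $e^{o(n)}$.

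The tool I would use is an ``inside--out'' surgery on the separating annulus. By Remark~\ref{t/2} together with property \eqref{finitely presented}, a \pint\ $(P,\partial P)$ of $o$ of volume $n$ sits in a bounded-width neighbourhood of $\partial P$; here $P$ is connected, $\partial P$ is connected within a bounded fattening, and both wind around $o$, bounding a finite region $D$, while an \ipint\ looks the same with the two shores interchanged. First I would build a map $\Phi$ which, from $(P,\partial P)$, reconstitutes the finite region so that its inner and outer boundary layers realise an \ipint\ of the same volume $n$ and essentially the same surface: one passes from the region bounded by $\partial P$ to the region whose outer shore is a controlled modification of $P$, and re-runs the interface construction of \Dr{pint}. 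For \Tr{dual bond} this is carried out through the duality bijection $e\mapsto e^*$: $\partial P$, being a minimal cut around $o$, dualises to a cycle-like set $\partial P^*$ in $G^*$, and after discarding a negligible amount of local data one recognises $(\partial P^*,P^*)$ as a \pint\ (or \mpint) of $G^*$ with the required parameters; the same construction run in the opposite direction, and with $G$ replaced by $G^*$, gives the reverse inequality. For \Tr{dual site} the surgery stays inside $G$ and is phrased directly via \Prr{connected}.

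The hard part --- and the reason this is the most technical result of the paper --- is that $\Phi$ is not a bijection: the surgery can merge or split the nested curves, and it generally has to record a bounded amount of ``repair'' data wherever the $P$-picture and the $\partial P$-picture disagree. So the real work is to bound the fibres of $\Phi$ and of its inverse by $e^{o(n)}$. Locality is what makes this feasible: by Remark~\ref{t/2} and the isoperimetric/quasi-geodesic properties \eqref{isop} and \eqref{quasi-geodesic} the annulus cannot wander far, so the repair data is supported on $O(n)$ edges, and the number of ways of distributing it, once identical local patterns are grouped, is bounded by a product of partition numbers $p(\cdot)$, hence subexponential by \Tr{HRthm}. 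Combining the two-sided comparison $c_{n,r,\epsilon}(G)\le e^{o(n)}\,c_{n,r,\epsilon'}^{*}(G)$ with its converse, taking $n$-th roots and letting $\epsilon\to 0$, yields $b_r(G)=b_r^*(G)$, and hence \Trs{dual site} and \ref{dual bond}. As a consistency check, the identity $b_r=b_{1/r}^r$ obtained this way is compatible with the universal bound \eqref{r ineq}, since $f$ itself satisfies $f(r)=f(1/r)^r$.
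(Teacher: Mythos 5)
Your opening reduction is the same as the paper's: by \eqref{b star} the theorem is exactly the statement $b_r(G)=b_r^*(G)$, so the whole content is a counting comparison between \pint s and \ipint s with sub\-exponential error. One remark before the main point: you do not actually need the two-sided comparison you set out to prove. The identity \eqref{b star} is \emph{exact} (it is a relabelling, not an estimate), so the single inequality $b_r^*\geq b_r$ for all $r$, chained twice around the loop $b_r^*\geq b_r=(b^*_{1/r})^r\geq (b_{1/r})^r=b_r^*$, already forces equality everywhere. This is how the paper proceeds, and it halves the work.

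The genuine gap is in the surgery itself. To turn an \pint\ $(P,\partial P)$ of $G$ into an \ipint, it is not enough to dualise: $(\partial^E P)^*$ is a cycle in $G^*$, and to make the resulting object a bona fide \pint\ of the dual base point one must cut that cycle open with a path $\Pi$ and attach it to an auxiliary cycle $C$ surrounding $P$ (so that the correct finite region $D$ and the correct boundary emerge from Definition~\ref{pint}). The cost of this attachment is $|\Pi|+|C|$, and $C$ must surround all of $P$, so its length is of the order of the perimeter of the bounding box of $P$. For a general \pint\ of volume $n$ in a two-dimensional lattice this perimeter can be $\Theta(n)$ (think of a long thin interface), which would shift the surface-to-volume ratio by a constant and ruin the count. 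This is precisely why the paper first proves $\widetilde b_r=b_r$ (\Prr{o box} and \Lr{o box site}): interfaces with bounding box of size $o(n)$ already achieve the full growth rate, and for those the attachment costs $o(n)$. That reduction --- the grid-of-shapes construction --- is the technical heart of the proof, and your proposal omits it entirely; Remark~\ref{t/2} controls the \emph{width} of the annulus around $\partial P$, not its diameter, so it cannot substitute. Separately, your fibre bound is asserted rather than proved: ``repair data supported on $O(n)$ edges'' with a bounded number of local choices gives $e^{\Theta(n)}$ preimages, not $e^{o(n)}$, and the appeal to \Tr{HRthm} is not justified here (partition numbers enter the paper only in \Lr{pint mpint}, where a multi-interface of size $n$ has $O(\sqrt n)$ components). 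In the paper the subexponential fibre bound comes for free because the map $P\mapsto Q$ satisfies $P^*\subseteq\partial Q$ and the discarded data lives on the $o(n)$-sized sets $\Pi$ and $C$.
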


To prove Theorems \ref{dual site} and \ref{dual bond} we need the following concepts. Given a graph $G\in \mathcal{S}$, let $v_1,v_2,\ldots, v_d\in \R^d$ be some linearly independent vectors that preserve $G$, and let $\mathcal{B}$ be the box determined by $v_1,v_2,\ldots,v_d$. For $\Z^d$ and $\mathbb{T}^d$ we can choose $v_1,v_2,\ldots v_d$ to be the standard basis of $\R^d$. Given a (site-)\pint\ $P$ of $G$, let $\mathcal{T}$ be the set of translations $\mathcal{B}+k_1v_1+k_2v_2+\ldots k_dv_d$, $k_1,k_2,\ldots,k_d \in \mathbb{Z}$ of $\mathcal{B}$ that intersect $P\cup \partial P$. The \defi{box} $B(P)$ of $P$ is the smallest box with sides parallel to $v_1,v_2,\ldots,v_d$ containing $\mathcal{T}$. We write $|B(P)|$ for the surface area of $B(P)$ and we call it the box size of $P$.

For every $n\in \N$, $r \in \R_+$, $\epsilon \in \R_+$ and $\delta \in \R_+$ let $c_{n,r,\epsilon,\delta}(G)$ denote the number of interfaces $P$ with $|P|=n$, $(r-\epsilon)n \leq |\partial P| \leq (r+\epsilon)n$ and $|B(P)|\leq \delta n$. We define $\widetilde b_r$ to be the exponential growth rate as $\delta \to 0$ and $\epsilon \to 0$:
$$\widetilde{b_r}=\widetilde{b_r}(G):= \lim_{\epsilon \to 0} \lim_{\delta \to 0}\limsup_{n\to\infty} {c_{n,r,\epsilon,\delta}(G)}^{1/n}.$$

Our aim now is to prove that $\widetilde b_r = b_r$. In other words, (site-)\pint s with a `fractal' shape have the same exponential growth rate as all (site-)\pint s. We will first consider the cases of $\Z^d$ and $\mathbb{T}^d$. 

\begin{figure}[htbp] 
\centering
\noindent
\begin{overpic}[height=.5\linewidth, width=.5\linewidth]{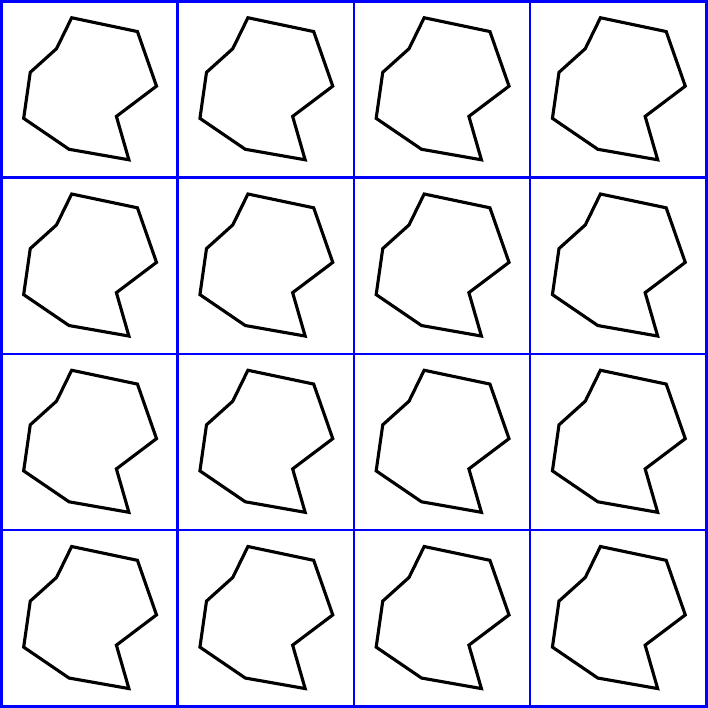}
\end{overpic}
\caption{\small The grid $B_n$ and the \pint s when $d=2$ in the proof of \Prr{o box}.}
\label{box}
\end{figure}

\begin{proposition} \label{o box}
Let $G$ be either $\Z^d$ or $\mathbb{T}^d$. Then $\widetilde b_r(G) = b_r(G)$.
\end{proposition}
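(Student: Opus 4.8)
The plan is to show $\widetilde b_r(G) = b_r(G)$ by proving the only non-trivial inequality $\widetilde b_r(G) \geq b_r(G)$; the reverse is immediate since every interface counted by $c_{n,r,\epsilon,\delta}$ is counted by $c_{n,r,\epsilon}$. The idea is to take an arbitrary interface $P$ with $|P| = n$ and surface-to-volume ratio near $r$, and to \emph{cut it into pieces and reassemble them} inside a box of volume $O(n)$ (i.e.\ with $|B(P')| \leq \delta n$ for the new interface $P'$), while controlling both the size and the boundary size of the result, and while ensuring that the reassembly map is at most sub-exponentially many-to-one. Here I would exploit the lattice structure of $\Z^d$ and $\mathbb{T}^d$: overlay a coarse grid $B_n$ of mesh $\approx \sqrt[d]{n}$ (the grid depicted in \fig{box}), so that $B(P)$ meets at most roughly $(|B(P)|/n)^{?}$ cells in each direction; the point is that $P$, being connected of size $n$, meets at most $O(n / \sqrt[d]{n}^{\,d-1}) = O(n^{1/d})$ grid cells, but possibly spread over a huge region. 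The reassembly permutes the grid cells so that the occupied cells are packed into a box of side $O(n^{1/d})$, hence volume $O(n)$.

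The key steps, in order: (1) Fix the mesh size $m \approx c\sqrt[d]{n}$ and chop $\R^d$ into translates of an $m$-box; let $k$ be the number of such boxes meeting $P \cup \partial P$, and observe $k = O(n^{(d-1)/d} / m^{d-1}) \cdot (\text{something})$ — more carefully, since $P$ is connected with $n$ edges it meets at most $O(n/m^{d-1} + n^{(d-1)/d})$ boxes, so choosing $m \asymp n^{1/d}$ gives $k = O(n^{(d-1)/d}\cdot n^{1/d}/\,\cdot) $; the honest bound I want is $k = O(n^{1-1/d})$ or so, small enough that a fractal rearrangement into an $O(n^{1/d})$-side box is possible. (2) Record the \emph{combinatorial pattern} of how $P$ and $\partial P$ sit inside each occupied box and how adjacent boxes glue along their shared faces; the number of possible patterns per box is bounded by a constant depending only on $m$ and $d$ (a box of volume $m^d = O(n)$ — wait, this is exponential in $n$, so I must be careful). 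This is the delicate point: I do not want to remember the full interior of each box, only enough to recover it. Instead I would argue that the full interface $P$ is determined by: the positions of the occupied boxes in the \emph{new} (packed) configuration, a bounded amount of gluing data along box interfaces, plus a ``key'' that is itself an interface-like object of smaller size, to which one applies induction or the Hardy–Ramanujan-type sub-exponential partition bound \Tr{HRthm}. (3) Verify that the rearranged object is genuinely a (site-)\pint{} of size $n' $ with $n' = n(1+o(1))$ and boundary size within $(r\pm\epsilon')n'$ — this uses that cutting along grid faces changes $|P|$ and $|\partial P|$ by at most $O(k \cdot m^{d-1}) = O(n^{(d-1)/d}) = o(n)$ edges, which is absorbed into $\epsilon$. (4) Bound the multiplicity of the rearrangement map sub-exponentially, so that $c_{n,r,\epsilon,\delta} \geq c_{n,r,\epsilon'}/e^{o(n)}$, and take $n$th roots and the appropriate limits in $\delta, \epsilon$.

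The main obstacle will be step (2)/(4): controlling the amount of information needed to invert the cut-and-repack operation. A naive encoding of the contents of each grid box costs $2^{m^d} = 2^{\Theta(n)}$ bits, which would be fatal. The resolution must be that one does \emph{not} rearrange down to a box of volume $O(n)$ in one shot, but rather argues more cleverly — e.g.\ only the occupied cells of a much coarser grid are repacked, the fine structure being left intact and carried along, so that the number of ``moves'' is sub-exponential; alternatively one shows that interfaces of box size $\leq \delta n$ already account for an $e^{o(n)}$-fraction by a direct counting/entropy argument comparing $c_{n,r,\epsilon,\delta}$ with $c_{n,r,\epsilon}$ via the observation that a typical connected set of $n$ edges cannot be forced to have super-linear diameter without paying an exponential price in the surface, contradicting $b_r < \infty$. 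I would aim to make precise this last heuristic: if most interfaces had box size $\gg n$, their ``elongated'' shape would force $|\partial P| \gg rn$, pushing them out of the window $(r\pm\epsilon)n$ — so the count in the window is dominated by the compact ones. Making the trade-off between elongation and surface quantitative, using the isoperimetry of $\Z^d$, is where the real work lies.
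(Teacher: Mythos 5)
Your reduction to the inequality $\widetilde b_r \geq b_r$ is correct, but the cut-and-repack strategy you build on it does not close, and you have in fact put your finger on exactly why: to invert the repacking map you would need to remember the contents of each mesh cell, which costs $2^{\Theta(n)}$ bits, so the map is exponentially many-to-one and the counting collapses. Neither of your two proposed rescues works. The first (``repack only a coarser grid, carrying the fine structure along'') is not an argument, and it is unclear how the repacked object would remain a single connected \pint\ at all. The second rests on a false heuristic: an interface of diameter comparable to $n$ (say a long thin snake in $\Z^2$) does \emph{not} have $|\partial P| \gg rn$; its surface-to-volume ratio stays bounded and can sit squarely in the window $(r\pm\epsilon)n$. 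So you cannot conclude that interfaces with $|B(P)|\leq \delta n$ dominate the count $c_{n,r,\epsilon}$ --- indeed the proposition is not proved by showing that most size-$n$ interfaces are compact, and there is no reason to believe they are.

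The resolution in the paper is to give up on compressing a single interface of size $n$ and instead to \emph{grow}: by pigeonholing on a polynomially-bounded notion of ``shape'' (box dimensions plus designated extremal boundary vertices on each face), one extracts a subfamily $K\subseteq I_{n,r,\epsilon}$ of size $N \geq c_{n,r,\epsilon}/p(n)$ whose members all fit the same box $\mathcal{B}$, then places $n^{d(d-1)}$ independently chosen members of $K$ into a grid of copies of $\mathcal{B}$ and joins consecutive ones by short paths through matching extremal vertices. The resulting object $Q$ is verified to be a single interface of size $\approx n^{d(d-1)+1}$ and ratio $r+o(1)$, while its bounding box has surface area only $O(n^{d(d-1)}) = o(|Q|)$; since the $K_{\mathbf{i}}$ are recoverable from $Q$ by deleting the attached paths, one gets $\geq N^{n^{d(d-1)}}$ such $Q$, which yields $\widetilde b_r \geq b_r$ after taking roots and limits. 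The point you are missing is that the smallness of the box is achieved \emph{relative to a much larger total size}, by letting the diameters of many size-$n$ pieces average out --- no individual interface ever has to be deformed, so no exponential inversion cost arises. Your proposal would need to be restructured along these lines to succeed.
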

\begin{proof}
We will first prove the assertion for \pint s. Let us first assume that $b_r>1$. Let $n\in\N$, $\epsilon>0$, $r>0$, and let $P\in I_{n,r,\epsilon}$. Consider the box $B(P)$ and notice that it contains $P\cup \partial P$ in its interior (no vertex of $P\cup \partial P$ lies in its topological boundary). Order the vertices on the boundary of the box arbitrarily. Define the \defi{shape} of an \pint\ $P$ to be the $3d$-tuple of numbers consisting of the dimensions of the box $B(P)$, and the $2d$ numbers determining the first vertex of each of the $2d$ faces of $B(P)$ in our ordering that is incident to $\partial P$. We will call these vertices \defi{extremal}. Notice that the extremal vertices are incident to a vertex in $\partial^V P$.

Notice that each side of $B(P)$ has at most $n+4$ vertices because the diameter of $P$ is at most $n$. Once we know the box of $P$, there are at most $(n+4)^{2d(d-1)}$ possibilities for the extremal vertices, hence there are at most $p(n):=(n+4)^{2d^2}$ possible shapes for \pint s of size $n$. On the other hand, there are exponentially many \pint s in $I_{n,r,\epsilon}$, so we can choose $n$ large enough to ensure that there is a non-empty set $K\subseteq I_{n,r,\epsilon}$ of cardinality at least $$N:=c_{n,r,\epsilon}/p(n)$$ consisting of \pint s $P$ with $|P|=n$ and $(r-\epsilon)n \leq |\partial P| \leq (r+\epsilon)n$ that have the same shape. 

We now piece elements of $K$ together to construct a large number of \pint s of arbitrarily high size that will contribute to $b_r$. We will construct a set $K_n$ of cardinality about $N^{n^{d(d-1)}}$ of \pint s of size about $n^{d(d-1)+1}$, of surface-to-volume ratio about $r$, and of small box size.

Recall that all \pint s in $K$ have the same shape, in particular, the same box $\mathcal{B}$. Let $B_n$ be a $d$-dimensional grid of $n^{d(d-1)}$ adjacent copies $B_{\textbf{i}}$, $\textbf{i}=(i_1,\ldots,i_d)$ of $B$ (each side contains $n^{d-1}$ copies of $B$). See \fig{box}. In each copy of $B$ in $B_n$, we place an arbitrary element of $K$. We denote the copy of $B$ placed in $B_{\textbf{i}}$ with $K_{\textbf{i}}$. Write $S_k$, $1\leq k\leq n^{d-1}$ for the slab containing the boxes $B_{\textbf{i}}$ with $i_1=k$. Our aim is to connect the \pint s inside the boxes using mostly short paths. First, consider $S_2$ and notice that every box in $S_2$ shares a common face with a box in $S_1$. We can move $S_2$ using the vectors $v_2,\ldots,v_d$ in order to achieve that the `rightmost' extremal vertices of $S_1$ coincide with the corresponding `leftmost' extremal vertices of $S_2$ lying in a common face with them. This is possible because all \pint s in $K$ have the same shape. Moving each slab $S_k$ in turn, we can make the `rightmost' and `leftmost' extremal vertices of consecutive slabs coincide. We now connect all these extremal vertices with their corresponding \pint s by attaching paths of length two parallel to $v_1$. Finally, we connect the \pint s in the first slab as follows. If two boxes in the first slab share a common face, then we connect the two extremal vertices lying in the common face with a path of minimum length inside that face (hence of length $O(n)$). Also, we attach a path of length two connecting all those extremal vertices to the \pint\ of their box (\fig{staircase}). 

\begin{figure}[htbp] 
\centering
\noindent
\begin{overpic}[width=.5\linewidth]{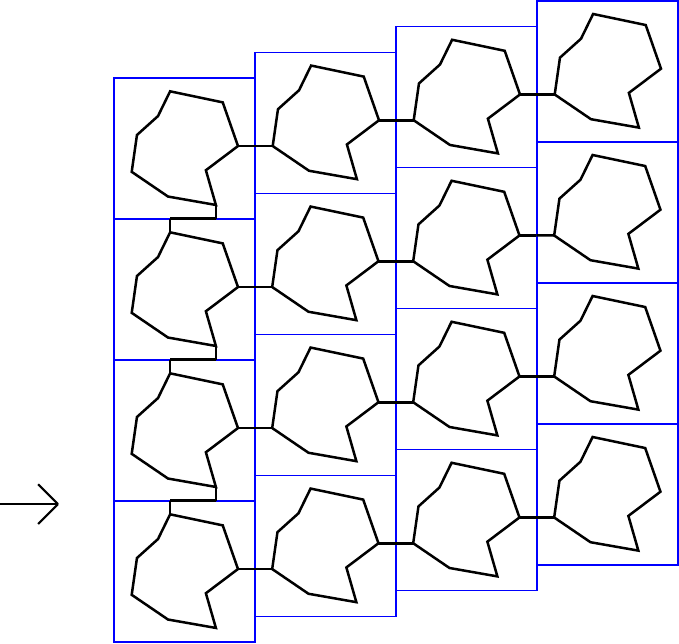}
\put(1,13){$u_1$}
\put(25,-7){$S_1$}
\put(46,-7){$S_2$}
\put(67,-7){$S_3$}
\put(88,-7){$S_4$}
\end{overpic}
\vspace*{5mm}
\caption{\small The \pint\ $Q$ in the proof of \Prr{o box}.}
\label{staircase}
\end{figure}

This construction defines a new graph $Q$. We claim that $Q$ is an \pint .
Indeed, if $d=2$ this follows easily from the topological definition of \pint s. For $d>2$, since $Q$ is a connected graph, there is an \pint\ associated to it. We will verify that $Q$ coincides with its \pint . 
Let $B$ denote the minimal edge cut of $Q$. Consider an \pint\ $K_{\textbf{i}}$ and the smallest box $B'_{\textbf{i}}$ containing $K_{\textbf{i}}$ (not necessarily its boundary). Any boundary edge in $\partial^E K_{\textbf{i}}$ that has not been attached to $Q$ has either one endvertex in the boundary of $B'_{\textbf{i}}$ and one in the complement of $B'_{\textbf{i}}$, or it can be connected in $B_{\textbf{i}}$ with a path lying in $G\setminus K_{\textbf{i}}$ with such an edge. From there it can be connected to infinity without intersecting $Q$. Hence all these boundary edges lie in $B$. If $e$ is an edge in $K_{\textbf{i}}\cup \partial K_{\textbf{i}}$ lying in the same basic cycle with one of the boundary edges attached to $Q$, then $e$ either lies in $\partial^E K_{\textbf{i}}$ or is incident to some edge in $\partial^E K_{\textbf{i}}$ that has not been attached to $Q$. This easily shows that every edge in $\partial K_{\textbf{i}} \setminus Q$ lies in the same $\partial Q$ component of \ard{\partial Q}{Q} with some edge in $B$. Furthermore, the boundary of the \pint\ of $Q$ coincides with the $\partial Q$ component of \ard{B}{Q}. Hence $\partial K_{\textbf{i}} \setminus Q$ lies in the boundary of the \pint\ of $Q$. It follows that all edges of $K_{\textbf{i}}$ lie in the \pint\ of $Q$. Finally, all attached edges are incident to an edge in $B$, which implies that they lie in the \pint\ of $Q$ as well. Therefore, $Q$ is contained in its \pint\, which proves that $Q$ coincides with it.

It can be easily seen that $Q$ has size roughly $n^{d(d-1)+1}$ and boundary size $$(r-\epsilon')|Q|\leq |\partial Q |\leq (r+\epsilon')|Q|$$ for some $\epsilon'=\epsilon'(n)$ not necessarily equal to $\epsilon$. Clearly we can choose $\epsilon'=\epsilon+o(1)$, since the number of attached edges is $O(n^d)$. The number of such $Q$ we construct is $|K|^{n^{d(d-1)}}\geq N^{n^{d(d-1)}}$, because by deleting all attached paths we recover all $K_{\textbf{i}}$, and we have $|K|$ choices for each $K_{\textbf{i}}$.

Note that each slab $S_k$ has been moved at distance at most $(k-1)n=O(n^d)$ from its original position. Hence, $|B(Q)|=O(n^{d(d-1)})=o(|Q|)$. The result follows by letting $n \to \infty$ and then $\epsilon \to 0$. 

It remains to consider the case where $b_r=0$ or $b_r=1$. If $b_r=0$, there is nothing to prove. If $b_r=1$, then we can argue as in the case $b_r>1$, except that now we place the same \pint\ at each box of the grid.

Let us now consider the case of site-\pint s. Let us focus on the case $b_r>1$. Let $K$ be a collection of at least $N$ site-\pint s of $I_{n,r,\epsilon}$, all of which have the same shape. Arguing as above, we place the elements of $K$ in a $d$-dimensional grid and we connect them in the same fashion to obtain a graph $Q$. For $\Z^d$ nothing changes, since $Q$ is an induced graph. However, this is not necessarily true for $\mathbb{T}^d$ because some endvertices of the attached paths are possibly incident to multiple vertices of the same site-\pint\ (even the paths of the first slab are not induced). This could potentially lead to an issue in the case that some boundary vertices cannot connect to infinity without intersecting the vertices of $Q$. 

But this is impossible in our case. Indeed, define $B''_{\textbf{i}}$ to be the smallest box containing $K_{\textbf{i}} \cup \partial K_{\textbf{i}}$. Notice that every face of $B''_{\textbf{i}}$ contains at most one vertex of $Q$. Hence no boundary vertex of $K_{\textbf{i}}$ lying in some face of $B'_{\textbf{i}}$ can be separated from infinity by the vertices of $Q$. Since any boundary vertex of $K_{\textbf{i}}$ can be connected in $G\setminus K_{\textbf{i}}$ to the boundary of $B'_{\textbf{i}}$, the claim follows. Thus the graph spanned by $Q$ is a site-\pint, which proves that $\widetilde b_r=b_r$ for site-\pint s as well. 
\end{proof}

The above arguments can be carried out for \pint s of any \qtl\ with only minor modifications that we will describe in \Lr{o box site}. However, certain difficulties arise when studying site-\pint s on an arbitrary \qtl . Indeed, when we connect two site-\pint s $P_1,P_2$ with a path, it is possible that some of the vertices of $\partial P_1$ or $\partial P_2$ are now `separated' from the remaining boundary vertices, see \fig{bad}. In fact, it is possible that most boundary vertices have this property. To remedy this, instead of choosing arbitrarily the path that connects $P_1$ and $P_2$, we will choose it appropriately so that only a few of them, if any, are `separated' from the remaining boundary vertices. 

\begin{figure}[htbp] 
\centering
\noindent
\begin{overpic}[width=.5\linewidth]{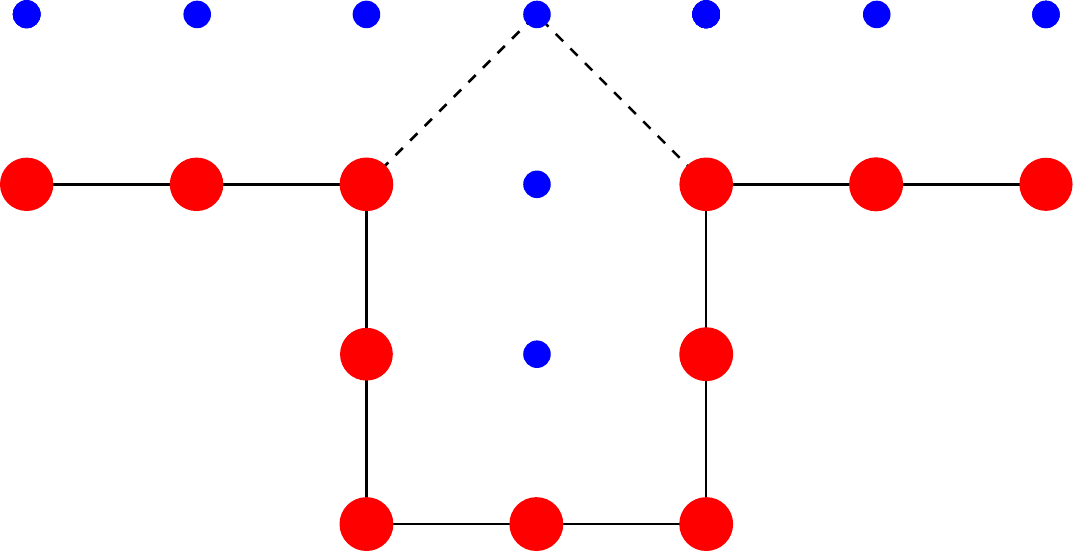}
\end{overpic}
\caption{\small If the vertex incident to the two dashed lines is attached to the site-\pint , the vertices of which are depicted with big disks, then the new graph is not a site-\pint\ any more.}
\label{bad}
\end{figure}

\begin{lemma}\label{connection}
Let $G$ be a graph satisfying \eqref{isop} for some $c>0,d\geq 2$. Assume that $\cc(G)$ admits a basis consisting of cycles whose length is bounded by some $t>0$. Let $P$ be a site-\pint\ of $G$. Then there are $|\partial^V P|-O(|P|^{1/4})=\Omega(|P|^{\frac{d-1}{d}})$ vertices $u\in \partial^V P$ such that the site-\pint\ of $P\cup \{u\}$ has size $|P|-O(|P|^{3/4})$ and boundary size $|\partial P|-O(|\partial P|^{3/4})$. 
\end{lemma}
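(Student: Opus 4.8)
The plan is to represent $P$ through its ``solid region'', show that adding a single vertex $u\in\partial^V P$ disturbs the interface only through the finite pieces of the exterior that $u$ pinches off, and prove that this disturbance is small for all but $O(|P|^{1/4})$ of the candidates $u$.

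First I would fix the solid region $R$ of $P$: the finite connected vertex set with $o\in R$ and $U:=G\setminus R$ connected, for which $V(P)=\{v\in R:\ v\text{ has a neighbour in }U\}$ and $\partial P=\{w\in U:\ w\text{ has a neighbour in }R\}$ (for $G$ satisfying \eqref{triangulation} this is the set $\overline D$ of \Prr{connected}; in general it is read off Definition~\ref{pint}). Since every vertex of $\partial P$ has a neighbour in $V(P)$ and vice versa, $|\partial P|=\Theta(|P|)$ with constants depending only on the maximum degree $\Delta$. As $\partial^V P$ is a minimal vertex cut separating $V(P)$ from $\infty$ whose vertices are incident to $V(P)$, it avoids the interior of $R$, so $\partial^V P\subseteq\partial P$. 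For $u\in\partial^V P$ I would let $W_u$ be the union of the finite components of $U\setminus\{u\}$; then $R\cup\{u\}\cup W_u$ is the solid region of the site-interface $(P_u,\partial P_u)$ of $P\cup\{u\}$, and, since a finite component of $U\setminus\{u\}$ is not adjacent to the infinite one, $\partial^V W_u\subseteq V(P)\cup\{u\}$.

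Next I would estimate the perturbation. Inspecting the four differences $P\setminus P_u$, $P_u\setminus P$, $\partial P\setminus\partial P_u$, $\partial P_u\setminus\partial P$, one sees that each is contained in $\{u\}\cup N(u)\cup\partial^V W_u\cup(\partial P\cap W_u)$, and every vertex of $\partial P\cap W_u$ has a neighbour in $V(P)\cap\partial^V W_u$; hence, writing $w_u:=|V(P)\cap\partial^V W_u|$,
\[\bigl|\,|P_u|-|P|\,\bigr|+\bigl|\,|\partial P_u|-|\partial P|\,\bigr|\ \le\ C(1+w_u)\]
for a constant $C=C(\Delta)$. Calling $u$ \emph{good} when $w_u\le|P|^{3/4}$ and using $|\partial P|=\Theta(|P|)$, every good $u$ satisfies the conclusion of the lemma. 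It then remains to bound the number of bad $u$, and for that it suffices to prove $\sum_{u\in\partial^V P}w_u=O(|P|)$ and invoke Markov's inequality. Since $\sum_{u}w_u=\sum_{v\in V(P)}\bigl|\{u\in\partial^V P:\ v\text{ has a neighbour in }W_u\}\bigr|$, bounded degree reduces everything to the \emph{key claim}: each $w\in U$ lies in $W_u$ for at most one $u\in\partial^V P$; granting it, $\sum_u w_u\le\Delta\,|V(P)|=O(|P|)$.

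The hard part is the key claim, which is where minimality of the cut $\partial^V P$ is essential. If some $w$ were pinched off by two distinct $u_1,u_2\in\partial^V P$, then $w\in W_{u_1}\cap W_{u_2}$, and a short combinatorial argument shows one of $u_1,u_2$ — say $u_1$ — lies in the finite piece of $U$ pinched off by the other, i.e.\ $u_2$ separates $u_1$ from $\infty$ within $U$. On the other hand, minimality of $\partial^V P$ yields a ray from $V(P)$ to $\infty$ meeting $\partial^V P$ only at $u_1$, whose portion beyond $u_1$ lies in $U$ (the infinite side of $\partial^V P$ in $G$ is contained in $U$) and avoids $u_2$ — contradicting that $u_2$ separates $u_1$ from $\infty$ in $U$. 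A secondary technical point is to justify the structural description of $R$ and of $(P_u,\partial P_u)$ for the non-triangulated members of $\mathcal S$, where one argues directly from Definition~\ref{pint} and Remark~\ref{t/2} rather than from \Prr{connected}; this only affects constants in the perturbation estimate. Finally the count is closed by the isoperimetric inequality \eqref{isop}: $|\partial^V P|\ge c|P|^{(d-1)/d}$, and since $(d-1)/d\ge\tfrac12>\tfrac14$ for $d\ge2$, the number of good $u$ is $|\partial^V P|-O(|P|^{1/4})=\Omega(|P|^{(d-1)/d})$, as claimed.
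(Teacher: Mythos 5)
Your proof is correct and shares the paper's overall architecture: assign to each $u\in\partial^V P$ a ``damage set'', prove these sets are essentially pairwise disjoint so that their total size is $O(|P|)$, then finish with Markov's inequality and the isoperimetric inequality \eqref{isop}. Where you genuinely diverge is in the key disjointness step. The paper works with $Q_v:=\partial P\setminus(\partial P_v\cup\{v\})$ and proves the $Q_v$ pairwise disjoint entirely inside the $\cp$-path/$F$-connectedness formalism of Definition~\ref{pint} (overlapping components of $\ard{L_u}{P}$ and $\ard{L_v}{P}$ would be $L\setminus(E_u\cup E_v)$-connected, forcing $u=v$). You instead prove the stronger statement that the pinched-off exterior regions $W_u$ are pairwise disjoint, via a purely graph-topological argument: if $w\in W_{u_1}\cap W_{u_2}$ then one of $u_1,u_2$ is separated from infinity in $U$ by the other, contradicting the non-redundancy of that vertex in the minimal cut $\partial^V P$. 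This is more transparent and self-contained, and your double-counting step ($\sum_u w_u\le\Delta|P|$) is a clean substitute for the paper's $\sum_v|Q_v|\le|\partial P|$. The one place your write-up is thinner than it should be is the non-triangulated case: there, membership in $V(P)$ is governed by $\cp$-paths rather than by adjacency to $U$, so both the ``solid region'' description of $(P_u,\partial P_u)$ and the containment of the four difference sets in a bounded neighbourhood of $\{u\}\cup\partial^V W_u$ need Remark~\ref{t/2}; the resulting constant is $\Delta^{O(t)}$ (this is exactly the paper's bound $|P\setminus P_u|\le\Delta^t|Q_u|$). You flag this, but it deserves a real argument rather than the remark that it ``only affects constants''. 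Likewise, ``a short combinatorial argument'' and the existence of the escape ray from $u_1$ staying in $U$ should be spelled out; both do work (the ray cannot re-enter $\overline D$ without contradicting that $\partial^V P$ is a cut), so these are presentational rather than substantive gaps.
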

\begin{proof}
For every $v\in \partial^V P$, let $P_v$ be the site-\pint\ of the connected graph $P\cup \{v\}$ and let $Q_v:=\partial P \setminus (\partial P_v \cup \{v\})$. Write $L$ for the edges with one endpoint in $P$ and the other in $\partial P$, $E_v$ for the edges of the form $vw$, $w\in P$, and $L_v$ for the edges with one endvertex in $P$ and the other in $Q_v$. First, we claim that the $Q_v$ are pairwise disjoint. Indeed, assuming that this is not true, we find a pair of distinct $u,v$ such that $Q_u\cap Q_v \neq \emptyset$. Since the vertices of $Q_z$, $z\in\{u,v\}$ do not belong to $\partial P_z$, \ard{E_z}{P} separates \ard{L_z}{P} from the remaining edges of \ard{L}{P}. Hence no vertex of $Q_z$ lies in $\partial^V P$, as any path starting from a vertex of $Q_z$ and going to infinity without intersecting $P$ must intersect $z$. This implies that if $X,Y$ are two overlapping components of \ard{L_u}{P}, \ard{L_v}{P}, respectively, then $X\cup Y$ is $L\setminus (E_u\cup E_v)$-connected, and thus $X,Y$ coincide. Moreover, $X$ is connected to \ard{E_u}{P} with a \cp-path in $G\setminus L$, and $Y$ is connected to \ard{E_v}{P} with a \cp-path in $G\setminus L$. Therefore, $u$ coincides with $v$, which is absurd. Hence, our claim is proved.

\begin{figure}[htbp] 
\centering
\noindent
\begin{overpic}[width=.6\linewidth]{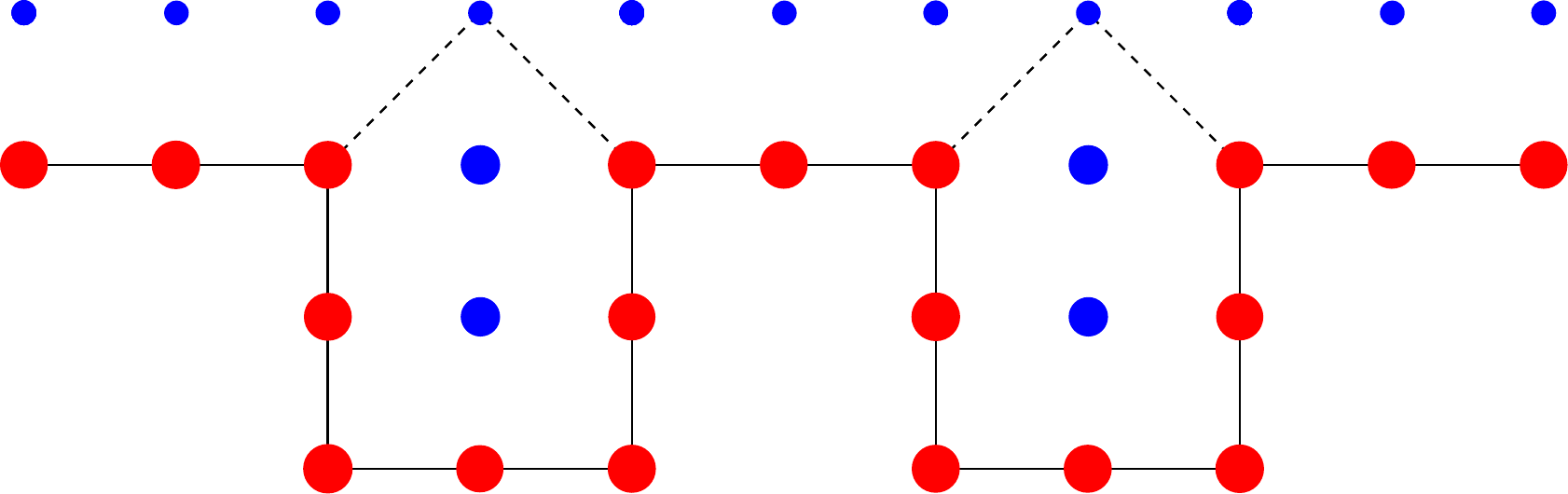}
\put(25,30){$u$}
\put(65,30){$v$}
\put(23,15){$Q_u$}
\put(63,15){$Q_v$}
\end{overpic}
\caption{\small The situation in the proof of \Lr{connection}.}
\label{disjoint}
\end{figure}

We can now conclude that $\sum_{v\in \partial^V P} |Q_v|\leq |\partial P|\leq \Delta |P|$, where $\Delta$ is the maximal degree of $G$. It follows that the number of $v\in \partial^V P$ such that $|Q_v|\geq |P|^{3/4}$ is at most $\Delta |P|^{1/4}$. By the isoperimetric inequality \eqref{isop} there is $c>0$ such that $|\partial^V P|\geq c |P|^{\frac{d-1}{d}}$, which implies the strict inequality $|\partial^V P|> \Delta |P|^{1/4}$ whenever $|P|$ is large enough. It is clear that $P_u$ has size $|P|-O(|P|^{3/4})$, because 
$|P\setminus P_u|\leq \Delta^t |Q_u|$. The proof is now complete.
\end{proof}

The boundary vertices satisfying the property of \Lr{connection} will be called \defi{good}, and the remaining ones will be called \defi{bad}. 

In order to generalise \Prr{o box} to all elements of $\mathcal{S}$, we will need the following definitions. Consider a quasi-transitive lattice $G$ in $\R^2$. Given two linearly independent vectors $z,w\in \R^2$ we write $B(z,w)$ for the box determined by $z$ and $w$. Given a side $s$ of $B(z,w)$, we write $B^s(z,w)$ for the box that is congruent to $B(z,w)$, and satisfies $B^s(z,w)\cap B(z,w)=s$. It is not hard to see that there are vectors $z_1,z_2,w_1,w_2$ such that the following hold:
\begin{itemize}
\item Both $z_1,w_1$ are parallel to $v_1$, and both $z_2,w_2$ are parallel to $u_2$.
\item For every side $s$ of $B(z_1,z_2)$, there are vertices $u\in B(z_1,z_2)$ and $v\in B^s(z_1,z_2)$ that can be connected with a path lying in $B(z_1,z_2)\cup B^s(z_1,z_2)$.
\item For every pair of vertices $u,v$ in $B(z_1,z_2)$, there is a path in $B(w_1,w_2)$ connecting $u$ to $v$.
\end{itemize}

We regard the tillings $\mathcal{T}_z$ and $\mathcal{T}_w$ of $\R^2$ by translates of $B(z_1,z_2)$ and $B(w_1,w_2)$, respectively, as graphs that are naturally isomorphic to $\Z^2$.

\begin{lemma}\label{o box site}
Consider a graph $G\in \mathcal{S}$. Then $\widetilde b_r(G) = b_r(G)$.
\end{lemma}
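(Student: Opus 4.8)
The plan is to follow the template of \Prr{o box}, replacing the grid of translated copies of the box $\mathcal{B}$ by a grid built from the two pairs of vectors $z_1,z_2$ and $w_1,w_2$, and to use \Lr{connection} to control the damage done to boundary vertices when pieces are glued together. First I would handle \pint s (the bond case), which is the easier half: the argument of \Prr{o box} goes through essentially verbatim once we work in the tiling $\ct_z$ (isomorphic to $\Z^2$) in place of the integer grid of copies of $\mathcal{B}$. We fix $n,\epsilon,r$, pass to a subcollection $K\subseteq I_{n,r,\epsilon}$ of size $N:=c_{n,r,\epsilon}/p(n)$ (where $p(n)$ is polynomial, counting the possible ``shapes'' --- box dimensions in the $z$-grid together with the extremal boundary vertices on each of the $4$ faces) all of whose members share a shape, place one element of $K$ in each of $n^2$ cells of an $n\times n$ subgrid of $\ct_z$, align consecutive slabs using translations parallel to $v_1$ and then parallel to $u_2$ (possible by the second bullet defining $z_1,z_2$), join aligned extremal vertices by short paths of bounded length, and finally join the cells of the first slab using paths that run inside a box $B(w_1,w_2)$ --- these have length $O(n)$ by the third bullet. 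The verification that the resulting connected graph $Q$ coincides with its own \pint\ is exactly the local/topological argument given for $Q$ in \Prr{o box}; the box size is $O(n^2)=o(|Q|)$, the surface-to-volume ratio is $r+o(1)$, and the count is $|K|^{n^2}\ge N^{n^2}$, giving $\widetilde b_r\ge b_r$, with the reverse inequality trivial. The cases $b_r=0,1$ are dealt with as before.

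The real work is the site case on a general \qtl, where gluing a connecting path to a site-\pint\ can ``separate'' boundary vertices and destroy the site-\pint\ property (\fig{bad}). Here the idea is to use only \emph{good} vertices in the sense of \Lr{connection}: before assembling the grid, I would replace each chosen site-\pint\ $P$ of $K$ by $P\cup\{u\}$ for a suitably chosen good vertex $u$ on each face of the box where a gluing path will be attached. By \Lr{connection} there are $\Omega(|P|^{1/2})$ good vertices in $\partial^V P$ (here $d=2$), so in particular good vertices are available on each of the relevant faces after possibly enlarging $n$; absorbing $u$ into the \pint\ changes $|P|$ and $|\partial P|$ only by $O(|P|^{3/4})$, hence leaves the surface-to-volume ratio at $r+o(1)$. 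After this modification, every attachment is made at a good vertex, so by the defining property of goodness the site-\pint\ of the union changes size and boundary size by only lower-order amounts at each of the $O(n^2)$ gluings; since there are $O(n^2)$ gluings and each costs $O(n^{3/4})$ in $|P|$-terms, the cumulative error is $o(|Q|)$ and the overall ratio is preserved. One then argues, exactly as in the $\mathbb{T}^d$ part of \Prr{o box}, that no boundary vertex of a cell $K_{\mathbf{i}}$ can be separated from infinity by the vertices of $Q$ (using that each face of the enclosing box of $K_{\mathbf{i}}$ meets $Q$ in at most one vertex, and that every boundary vertex of $K_{\mathbf{i}}$ reaches that face within $G\setminus K_{\mathbf{i}}$), so the graph spanned by $Q$ is genuinely a site-\pint.

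The main obstacle I anticipate is precisely this bookkeeping in the site case: one must choose, simultaneously, the good vertices to absorb, the alignment translations, and the connecting paths so that (a) the alignments remain consistent across a whole slab --- which is why all elements of $K$ are forced to share a shape \emph{after} the absorption step, so the absorption must be built into the definition of ``shape'' --- and (b) the ``separation'' phenomenon of \fig{bad} is provably avoided at every one of the $O(n^2)$ gluing sites, not just generically. Getting the error terms to stay $o(|Q|)$ while $|Q|\approx n^3$ and each gluing perturbs things by $O(n^{3/4})$ is comfortable, but it has to be tracked uniformly. Once that is in place, $\widetilde b_r(G)\ge b_r(G)$ follows by letting $n\to\infty$ then $\epsilon\to0$, and the inequality $\widetilde b_r(G)\le b_r(G)$ is immediate from $c_{n,r,\epsilon,\delta}\le c_{n,r,\epsilon}$, completing the proof.
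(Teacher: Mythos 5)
Your overall architecture matches the paper's: bond case by rerunning Proposition~\ref{o box} in the tilings $\mathcal{T}_z,\mathcal{T}_w$, site case by using Lemma~\ref{connection} to attach connecting paths only at good vertices. But two steps that the paper has to work for are asserted rather than proved. First, Lemma~\ref{connection} only says that all but $O(n^{1/4})$ of the $\Omega(\sqrt{n})$ boundary vertices are good; it says nothing about \emph{where} they sit, so ``good vertices are available on each of the relevant faces'' does not follow --- a priori all good vertices could lie on the far side of the interface from the gluing site. The paper resolves this by routing: it takes the canonical path between the aligned extremal vertices, steps to the boundary $\partial^V Q_i$ of the $(t-1)$-neighbourhood of the interface, and walks along that boundary (using Remark~\ref{t/2} and the isoperimetric inequality, which give $\Omega(k)$ candidate vertices within $k$ steps) for $k=cn^{1/4}$ steps until a good vertex is found. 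This is also where the $\Omega(\sqrt{n})$ lower bound on one dimension of the enclosing box is needed, to guarantee that the two rerouted paths emanating from a single cell (one to each neighbour) do not overlap and themselves cause the separation of Figure~\ref{bad}; you do not address this.

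Second, and more seriously, your count $|K|^{n^2}\ge N^{n^2}$ is not justified in the site case. In Proposition~\ref{o box} the connecting paths are canonical, so deleting them recovers the constituent interfaces; here the paths (and, in your variant, the absorbed good vertices) depend on the individual interfaces, so distinct tuples $(K_{i,j})$ may well produce the same $Q$, and after gluing one can only recover the site-interface of $K_{i,j}\cup F_{i,j}$, which has lost $O(n^{3/4})$ vertices of $K_{i,j}$. The paper needs two extra ingredients to close this: it restricts to a subfamily of $K^{n^2}$ of size at least $N^{n^2}/\Delta^{O(n^{9/4})}$ on which the attached paths are literally identical (possible because there are only $\Delta^{O(n^{9/4})}$ choices of paths in total), and it invokes Lemma~\ref{local} to show that each recovered site-interface of $K_{i,j}\cup F_{i,j}$ has only $n^{O(n^{3/4})}$ pre-images $K'_{i,j}$. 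Both corrections are subexponential in $|Q|\approx n^3$, so the bound survives, but without them the lower bound on the number of distinct $Q$'s --- which is the whole point of the construction --- is not established.
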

\begin{proof}
We handled $\Z^d$ and $\mathbb{T}^d$ above, so it only remains to handle \qtl s. We will focus on the case of site-\pint s with surface-to-volume ratio $r$ such that $b_r>1$, which is the hardest one.

Let $n\in \N$, $\epsilon>0$, $r>0$, and let $P\in I_{n,r,\epsilon}$. Recall that there is a $t>0$ such that the cycles in our basis of $\cc(G)$ have length at most $t$. Consider the set of boxes in $\mathcal{T}_w$ that either intersect the $2t$-neighbourhood of $P\cup \partial P$, or they share a common face with such a box. Let $B^t(P)$ be the smallest box with sides parallel to $w_1,w_2$ containing all these boxes. Write $s$ for a side of $B^t(P)$. Order the vertices of $B^t(P)$ arbitrarily. Among all vertices of $\partial^V P$ that are closest to $s$, there is one that is minimal. The minimal vertices associated to the sides of $B^t(P)$ are called \defi{extremal}. Each extremal vertex lies in some box of $\mathcal{T}_z$ that is called extremal as well (in case a vertex lies in more than one boxes of $T_z$, order the boxes arbitrarily and choose the minimal one). We define the shape of a site-\pint\ $P$ to be the tuple comprising the dimensions of the box $B^t(P)$, and the extremal vertices of $P\cup \partial P$. Using the polynomial growth of $G$, we immediately deduce that we have polynomially many choices $p(n)$ for the shape and auxiliary shape of any site-\pint\ $P$. We define $K$ as in the proof of \Prr{o box}.

By definition, all elements $P$ of $K$ have the same $B^t=B^t(P)$. It is not hard to see that at least one of the two dimensions of $B^t$ is $\Omega(\sqrt{n})$. Indeed, for every vertex $u$ of $G$ there is a disk of small enough radius $r_u>0$ containing no other vertex except for $u$. The translation invariance of $G$ implies that there are only finitely possibilities for $r_u$, hence $r=\inf_{u\in V} r_u>0$. It follows that $B^t$ has area $\Omega(n)$ because it contains $n$ disjoint disks of radius $r$. This implies that at least one of the two dimensions of $B^t$ is $\Omega(\sqrt{n})$. We can assume without loss of generality that the dimension parallel to $v_1$ has this property. 

We start with a $n\times n$ grid of copies $B_{i,j}$ of $B^t$. We place inside every $B_{i,j}$ a site-\pint\ $K_{i,j}\in K$. We write $S_k$ for the $k$th column of the grid. Similarly to the proof of \Prr{o box}, we move every column, except for the first one, in the direction parallel to $v_2$ in such a way that the `rightmost' extremal boxes of $S_k$ and the `leftmost' extremal boxes of $S_{k+1}$ can be connected in $\mathcal{T}_z$ by a straight path parallel to $v_1$.

For every pair $K_{i,j}$, $K_{i,j+1}$ of consecutive \pint s, there is an induced path in $G$ of bounded length connecting their `rightmost' and `leftmost' extremal vertices. We can further assume that the path lies in $B_{i,j}\cup B_{i,j+1}$ by our choice of $z_1,z_2,w_1,w_2$ and the definition of $B^t$. Indeed, if $\Pi=B_1,B_2,\ldots,B_l$ is a straight path in $\mathcal{T}_z$ connecting the `rightmost' and `leftmost' extremal boxes of $K_{i,j}$, $K_{i,j+1}$, respectively, we first connect all consecutive boxes $B_m,B_{m+1}$, $m=1,\ldots,l-1$ using paths $\Pi_m$ in $G$ lying in $B_m\cup B_{m+1}$. Then we connect the `rightmost' and `leftmost' endvertices of consecutive paths $\Pi_m$, $\Pi_{m+1}$, respectively, using paths lying in boxes congruent to $B(w_1,w_2)$ containing those endvertices. Finally, we connect the `rightmost' and `leftmost' of $K_{i,j}$, $K_{i,j+1}$ to $P_1$ and $P_{l-1}$ using paths lying in boxes congruent to $B(w_1,w_2)$. In this way we obtain a path that lies in $B_{i,j}\cup B_{i,j+1}$, because both $B_{i,j}$, $B_{i,j+1}$ contain a `layer' of boxes of $T_w$ surrounding $K_{i,j}$, $K_{i,j+1}$. The path is not necessarily disjoint from $K_{i,j}$, $K_{i,j+1}$ but it certainly contains a subpath that is disjoint from them, and connects two boundary vertices of both site-\pint s. We can choose the subpath to contain exactly two boundary vertices, one from each of the two site-\pint s.

Let $\mathcal{W}$ be the path connecting $K_{i,j}$, $K_{i,j+1}$, and let $u_1\in \partial^V K_{i,j}$, $u_2\in \partial^V K_{i,j+1}$ be the endvertices of $\mathcal{W}$. Adding $u_1,u_2$ to $K_{i,j}$, $K_{i,j+1}$ may result to much smaller site-\pint s. For this reason we need to find two good boundary vertices. Consider the vertices $x_1,x_2$ at distance $t$ from $u_1,u_2$, respectively, lying in $\mathcal{W}$. Write $Q_1,Q_2$ for the $(t-1)$-neighbourhood of $K_{i,j}$, $K_{i,j+1}$, respectively, and notice that both $\partial^V Q_1$, $\partial^V Q_2$ have distance $t$ from $\partial K_{i,j}$, $\partial K_{i,j+1}$, respectively. Furthermore, they coincide with the boundary of some site-\pint , i.e.\ the site-\pint\ of the finite connected component of their complement, and by Remark \ref{t/2} we can connect any pair of vertices of $\partial^V Q_i$, $i=1,2$ with a path lying in the $t/2$ neighbourhood of $\partial^V Q_i$, hence disjoint from $K_{i,j}$, $K_{i,j+1}$ and their boundaries. The isoperimetric inequality \eqref{isop} gives $\partial^V Q_i=\Omega(\sqrt{n})$. Moreover, for every $k>0$, the number of vertices of $\partial^V Q_i$ that can be connected to $x_i$ with a path of length at most $k$ lying in the $t/2$-neighbourhood of $\partial^V Q_i$ is $\Omega(k)$. On the other hand, \Lr{connection} implies that $O(n^{1/4})$ boundary vertices of either $K_{i,j},K_{i,j+1}$ are bad. Hence choosing $k=cn^{1/4}$ for some large enough constant $c>0$, we can find two good vertices $y_1,y_2$ in $\partial^V K_{i,j},\partial^V K_{i,j+1}$, that can be connected to $x_1,x_2$, respectively, in the following way: we first connect $y_i$ to some vertex of $\partial^V Q_i$ with a path of length $t$, and then we connect the latter vertex with a path of length $O(n^{1/4})$ lying in the $t/2$ neighbourhood of $\partial^V Q_i$. Taking the union of these two paths with the subpath of $\mathcal{W}$ connecting $x_1$ to $x_2$, we obtain a path of length $O(n^{1/4})$ connecting $y_1$ to $y_2$ that lies in $B_{i,j}\cup B_{i,j+1}$. We attach this path to our collection of site-\pint s. 

Consider now a site-\pint\ $K_{i,j}$ with $2\leq j\leq n-1$. Notice that exactly two paths emanate from $\partial K_{i,j}$, one of which has distance $O(n^{1/4})$ from the `rightmost' extremal vertex of $K_{i,j}$, and the other has distance $O(n^{1/4})$ from the `leftmost' extremal vertex of $K_{i,j}$. The two paths may possibly overlap, separating some vertices of $\partial K_{i,j}$ from infinity. However, the distance between the `rightmost' and the `leftmost' extremal vertex is $\Omega(\sqrt{n})$ because the dimension of $B_{i,j}$ that is parallel to $v_1$ is $\Omega(\sqrt{n})$. We can increase the value of $n$ if necessary to ensure that the paths do not overlap.

Moreover, we connect, as we may, the boundaries of consecutive site-\pint s $K_{i,1},K_{i+1,1}$ of the first column with induced paths of length $O(n)$ disjoint from any other site-\pint, only the endvertices of which intersect the boundary of $K_{i,1},K_{i+1,1}$.

In this way we obtain a graph $H$ containing all site-\pint s $K_{i,j}$. Consider the graph spanned by $H$, and let $Q$ be the site-\pint\ of this induced graph. We claim that $Q$ has size $n^3(1-o(1))$, and boundary size between $(r-\epsilon')|Q|$ and $(r+\epsilon')|Q|$, for some $\epsilon'=\epsilon+o(1)$. Indeed, for every site-\pint\ $K_{i,j}$ that does not lie in the first column, if $F_{i,j}\subset \partial K_{i,j}$ is the set of endvertices of the attached paths that emanate from $\partial K_{i,j}$, then the site-\pint\ of $K_{i,j}\cup F_{i,j}$ (which has size $n-O(n^{1/4})$) lies in the boundary of $Q$. Since we have $n^2-n$ such $K_{i,j}$, the claim follows readily. 

Each column $S_k$ has been moved at distance $O(kn)=O(n^2)=o(|Q|)$ from its original position. Hence $|B(Q)|=o(|Q|)$. It remains to show that the number of such $Q$ constructed is roughly $N^{n^2}$. Notice that we have not necessarily used the same paths to connect our \pint s, and so given such a $Q$, we cannot immediately recover all possible sequences $(K_{i,j})$ giving rise to $Q$. Our goal is to restrict to a suitable subfamily of $K^{n^2}$.

We claim that there are only subexponentially many in $n^3$ possibilities for the attached paths. Recall that all elements of $K$ have the same extremal vertices. The endvertices of every attached path have distance $O(n^{1/4})$ from a pair of extremal vertices. Using the polynomial growth of $G$, we conclude that there are only polynomially many choices in $n$ for each endvertex. Moreover, the paths connecting \pint s of the first column have length $O(n)$, and the remaining paths have length $O(n^{1/4})$. There are at most $\Delta^{O(n)}$ choices for each path connecting site-\pint s of the first column, and at most $\Delta^{O(n^{1/4})}$ choices for each of the remaining paths, because any path starting from a fixed vertex can be constructed sequentially, and there are at most $\Delta$ choices at each step. In total, there are $\Delta^{O(n^{9/4})}$ possibilities for the attached paths. This proves our claim. 

On the other hand, there are at least $N^{n^2}$ sequences $(K_{i,j})\in K^{n^2}$, hence for a subfamily of $K^{n^2}$ of size at least $N^{n^2}/{\Delta^{O(n^{9/4})}}$, we have used exactly the same paths. Let us restrict to that subfamily. Since we have fixed the paths connecting the elements of the subfamily, given some $Q$ constructed by the elements of that subfamily, we can now delete every vertex of the attached paths except for their endvertices to `almost' reconstruct all site-\pint s producing $Q$. To be more precise, if $(K_{i,j})$ and $(K'_{i,j})$ are two sequences producing the same $Q$, then the site-\pint s of $K_{i,j}\cup F_{i,j}$ and $K'_{i,j}\cup F_{i,j}$ coincide. By \Lr{local} below, if we fix a sequence $(K_{i,j})$ producing $Q$, then for each $i,j$ with $j>1$, there are subexponentially many in $n$ possible $K'_{i,j}$ as above. For each of the remaining $i,j$ there are at most exponentially many in $n$ $K'_{i,j}$ as above, since there are at most exponentially many site-\pint s in total. Therefore, each $Q$ can be constructed by subexponentially many in $n^3$ sequences. We can now deduce that we constructed roughly $N^{n^2}$ $Q$, and taking limits we obtain $\widetilde b_r = b_r$, as desired.
\end{proof}

We now prove the lemma mentioned in the above proof.

\begin{lemma}\label{local}
Let $G$ be a \qtl . Let $P$ be a site-\pint\ of size $n$ in $G$, and $F\subset \partial^V P$. Assume that the site-\pint\ of $P\cup F$ has size at least $n-O(n^{3/4})$. Then the number of site-\pint s $P'$ of size $n$ such that the site-\pint s of $P\cup F$ and $P'\cup F$ coincide, is $n^{O(n^{3/4})}$.
\end{lemma}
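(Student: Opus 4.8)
The plan is to show that the common site-interface $R:=S(P\cup F)=S(P'\cup F)$, together with the fixed set $F$, determines $P'$ up to $n^{O(n^{3/4})}$ choices. Here, for a finite connected set of vertices $A$, let $\overline A$ denote the union of $A$ with the finite components of $G\setminus A$, and let $S(A)$ denote the site-interface associated with $A$, i.e.\ (by \Dr{pint}, cf.\ \Prr{connected}) the set of vertices of $\overline A$ having a neighbour outside $\overline A$.

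The first step is to recover $\overline A$ from $S(A)$. I claim $\overline{S(A)}=\overline A$: indeed $S(A)\subseteq\overline A$; every vertex of $\overline A\setminus S(A)$ has all its neighbours in $\overline A$, so the components of $\overline A\setminus S(A)$ are exactly the finite components of $G\setminus S(A)$; and $G\setminus\overline A$ is connected because $G$ is $1$-ended. Applying this to $A=P\cup F$ and to $A=P'\cup F$ gives $\overline{P\cup F}=\overline R=\overline{P'\cup F}$; denote this common set by $Z$. In particular $Z$ depends only on $R$, we have $P'\subseteq Z$, and $G\setminus Z$ is the unique infinite component of $G\setminus(P'\cup F)$.

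The second step is to show $R\setminus F\subseteq P'$. If $v\in R$ then $v$ has a neighbour outside $Z$, hence outside $P'\cup F$, and this neighbour lies in the infinite component of $G\setminus(P'\cup F)$; so $v$ lies in no finite component of $G\setminus(P'\cup F)$, which forces $v\in P'\cup F$, and thus $v\in P'$ whenever $v\notin F$. Writing $P'=(R\setminus F)\cup X$, we get $X\subseteq Z\setminus(R\setminus F)$ and $|X|=n-|R|+|R\cap F|$. By hypothesis $|R|\ge n-O(n^{3/4})$, and $|R\cap F|\le|F|=O(n^{3/4})$ (in the application $F$ is bounded, being the set of endvertices of the $O(1)$ paths attached to $\partial K_{i,j}$; more intrinsically, the portion of $P'$ absorbed upon adjoining $F$ has size $O(n^{3/4})$ by \Lr{connection}), so $|X|=O(n^{3/4})$. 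Finally, $Z=\overline{P\cup F}$ is enclosed by the connected set $P\cup F$, which has diameter $O(n)$, so by the quadratic growth of a \qtl\ we have $|Z|=O(n^{2})$. Hence the number of admissible sets $X$, and therefore of admissible $P'$, is at most $\sum_{k\le Cn^{3/4}}\binom{|Z|}{k}\le n^{O(n^{3/4})}$, which is the claimed bound.

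The hard part is the first two steps, namely the finite-component bookkeeping for \emph{site}-interfaces: one must work consistently with vertex sets, induced subgraphs, and the operation $A\mapsto\overline A$, and in particular check the identity $\overline{S(A)}=\overline A$ and the inclusion $R\setminus F\subseteq P'$. The two remaining estimates, $|X|=O(n^{3/4})$ and $|Z|=O(n^{2})$, are routine once this structural picture is set up, the latter using only that a \qtl\ is quasi-isometric to $\R^2$.
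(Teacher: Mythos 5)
Your counting strategy is the same as the paper's: the common interface $R$ forces $R\setminus F\subseteq P'$, the hypothesis forces the unknown remainder of $P'$ to have size $O(n^{3/4})$, and a binomial coefficient over a polynomially large ground set yields $n^{O(n^{3/4})}$. The one place you genuinely diverge is how the unknown part is localized. The paper notes that each component of $P'\setminus R$ has size $O(n^{3/4})$ and, since $P'$ is connected, is attached to $R\subseteq P\cup F$; hence $P'\setminus R$ lies in the $O(n^{3/4})$-neighbourhood of $P$, which has polynomial size by the growth condition. You instead confine $P'$ to $Z=\overline{P\cup F}$ via the identity $\overline{S(A)}=\overline{A}$. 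Both localizations give the same bound, and yours is tidier, but it buys this at the cost of an input the paper does not need here.

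That input is the description of the site-\pint\ of $A$ as ``the vertices of $\overline A$ having a neighbour outside $\overline A$''. This is \Prr{connected}, which is proved only under the triangulation hypothesis \eqref{triangulation}, whereas \Lr{local} is stated and applied for arbitrary \qtl s. On the square lattice the description is actually false: by \ref{pint b} of \Dr{pint}, a vertex of $\overline A$ that is only ``diagonally exposed'' to the unbounded face (all of its neighbours lie in $\overline A$, but it lies on a basic square containing an edge of the minimal cut) still belongs to the site-\pint. The two facts you really use do survive in general, but need different justifications: $R\subseteq P'\cup F$ follows from \Tr{unique} (the interface is contained in its cluster), and $\overline{S(A)}=\overline A$ only requires $S(A)\subseteq\overline A$ together with the inclusion of the outer layer of $\overline A$ in $S(A)$, both of which can be extracted from \Dr{pint} but not simply read off \Prr{connected}. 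With that repair --- or by substituting the paper's connectivity-based localization, which sidesteps the issue entirely --- your argument is complete; the estimates $|X|=O(n^{3/4})$ and $|Z|=O(n^{2})$, the final binomial bound, and your explicit remark that $|F|$ must be controlled (it is bounded in the application) are all fine.
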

\begin{proof}
Consider a site-\pint\ $P'$ of size $n$ such that the site-\pint\ $X$ of $P\cup F$, and the site-\pint\ of $P'\cup F$ coincide. Let $k$ be the size of $P'\setminus X$. By our assumption $k=O(n^{3/4})$. Each connected component of $P'\setminus X$ is incident to some vertex of $P$, hence every vertex of $P'\setminus X$ has distance $O(n^{3/4})$ from $P$. By the polynomial growth of $G$, the number of vertices at distance $O(n^{3/4})$ from $P$ is at most 
$m$ for some $m=O(n^{3/2})$. There are $${m\choose k}\leq m^k =n^{O(n^{3/4})}$$ subsets of size $k$ containing vertices having distance at most $k$ from $P$. Therefore, there are $n^{O(n^{3/4})}$ site-\pint s $P'$ as above. 
\end{proof}

\begin{figure}[htbp] 
\centering
\noindent
\begin{overpic}[height=.5\linewidth, width=.5\linewidth]{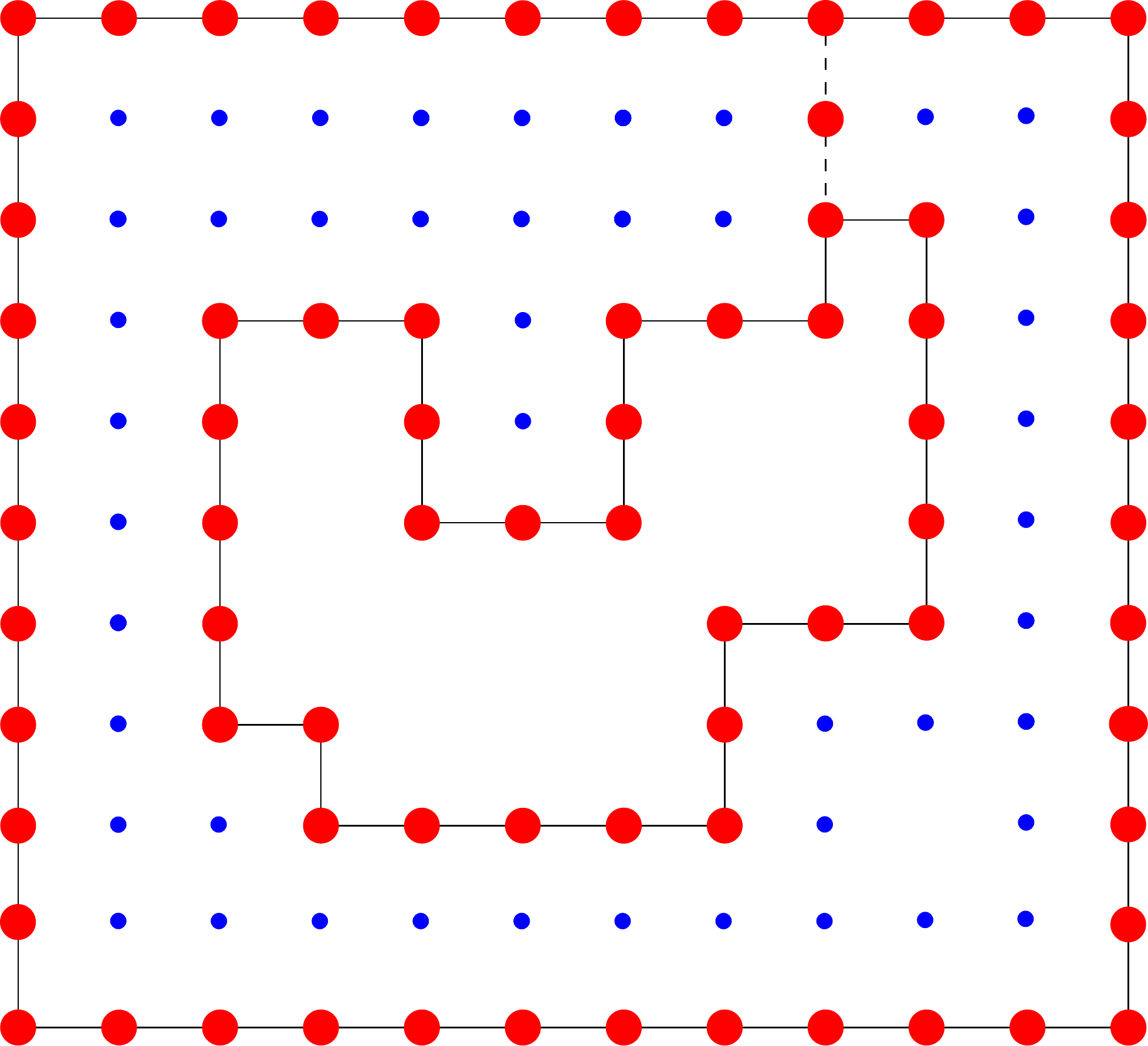}
\end{overpic}
\caption{\small The vertices of $Q$ are depicted with big disks, and the vertices of $\partial Q$ are depicted with smaller disks. The edges spanned by $P$ and $C$ are depicted in solid lines, while the edges of $\Pi$ are depicted in dashed lines.}
\label{ipint}
\end{figure}

We can now prove the main results of this section.

\begin{proof}[Proof of \Tr{dual site}]
Assume first that both $b_r$ and $b_{1/r}$ are not $0$.
We start by proving that \labtequ{star ineq}{$b_r^*\geq b_r$.} 
Combined with \eqref{b star} this will easily yield the desired equality.

Assume that $G$ is a \qtl . Let $n\in \N$, $r>0$, $\epsilon>0$, and choose $P\in I_{n,r,\epsilon}$. By \Prr{o box site}, we may assume that $P$ satisfies $|B(P)| = o(|P|)=o(n)$. Recall that $B(P)$ contains $P\cup \partial P$ in its interior. It is not hard to see that there is a cycle $C$ at bounded distance from $P$ that separates $B(P)$ from infinity, and has size $O(|B(P)|)$. Arguing as in the proof of \Lr{o box site} we find a good vertex $u\in \partial^V P$, and an induced path $\Pi$ connecting $u$ to $C$ that has size $O(n^{1/4})$, and does not contain any other vertex of $P\cup\partial P$. 

Our aim now is to find a suitable \ipint\ containing the site-\pint\ of $P\cup\{u\}$, which we denote by $X$. Since the cycle space of $G$ is generated by its triangles, $\partial^V P$ spans a cycle surrounding $P$ and the remaining boundary vertices. Hence $\partial^V P \setminus \{u\}$ spans a connected graph. The graph $\Gamma:=X\cup \Pi \cup C$ surrounds an open subset of the plane that contains $\partial^V P \setminus \{u\}$. Consider the connected component $Y$ of $\partial^V P \setminus \{u\}$ in this open set. Write $Q$ for the \ipint\ of $Y$, i.e.\ the boundary of the site-\pint\ of $Y$.

We claim that $Q$ contains $X$ and is contained in $X\cup \Pi \cup C$. To see that $Q$ contains $X$, notice that all vertices of $X$ are incident to $Y$ because $G$ is a triangulation, and lie in the external face of $Y$. Therefore, $X$ is contained in $Q$. Moreover, if $Q$ contains some vertex not in $X\cup \Pi \cup C$, then we can add this vertex to $Y$ to obtain an even larger connected graph. This contradiction shows that there is no such vertex and proves our claim. \medskip

We now consider the case where $G=\mathbb{T}^d$. We can let $C$ be the set of vertices in the boundary of $B(P)$, and $\Pi$ be a path of length $2$ connecting an extremal vertex of $B(P)$ to $P$. Let $Y$ be the subgraph of $G$ surrounded by $P\cup \Pi \cup \partial C$. It is clear that $Y$ is connected. Write $Q$ for the \ipint\ of $Y$. Every vertex of $P$ is incident to $Y$, and lies in the infinite component of $G\setminus Y$. Hence $P$ lies in $Q$. Furthermore, $Q$ contains only vertices of $X\cup \Pi \cup \partial C$.

In both cases, $Q$ has roughly $n$ vertices and surface-to-volume ratio between $(r-\epsilon')|Q|$ and $(r+\epsilon')|Q|$ for some $\epsilon'=\epsilon+o(1)$. Moreover, each $Q$ can be obtained from only subexponentially many $P$. This proves \eqref{star ineq}. Combining this with \eqref{b star}, we obtain the following:
$$b_r^*\geq b_r = (b^*_{1/r})^r \geq b_{1/r}^r = b^*_r,$$
where both inequalities coincide with \eqref{star ineq} and both equalities with \eqref{b star}. Thus we must have equality all along, and in particular $b_r = b_{1/r}^r$. 

If $b_r$ or $b_{1/r}$ is equal to $0$, then the same argument shows that 
both $b_r$ and $b_{1/r}$ are equal to $0$. This completes the proof.
\end{proof}

We now prove \Tr{dual bond}.

\begin{proof}[Proof of \Tr{dual bond}]
Assume first that both $b_r$ and $b_{1/r}$ are not $0$. Choose $P\in I_{n,r,\epsilon}$ such that $|B(P)| = o(|P|)=o(n)$. Consider a cycle $C$ as in the proof of \Tr{dual site} and connect $\partial P$ to $C$ with a path $\Pi$ of minimal length. Notice that $(\partial^E P)^*$ is a cycle, hence every $(\partial P \setminus E(\Pi))^*$ is a connected graph. 

Let $X$ be the connected component of $(\partial P \setminus E(\Pi))^*$ in $G^*$ that is surrounded by $P\cup \Pi \cup C$, and let $Q$ be the \pint\ of $X$.  Arguing as in the proof of \Tr{dual site}, we see that $P^*$ lies in $\partial Q$, $Q$ has size roughly $n$, and $(r-\epsilon')|Q|\leq |\partial Q|\leq (r+\epsilon')|Q|$ for some $\epsilon'=\epsilon+o(1)$.

Let $b^\bullet_r(G)$ be defined like $b_r(G)$ except that we now consider \ipint s. Thus we have 
\labtequ{star bull}{$b^*_r(G)= b^\bullet_r(G^*)$}
by the definitions. The above construction now yields the inequality
$b^\bullet_r(G) \geq b_r(G)$. 

Combining this with \eqref{b star}, which we rewrite using \eqref{star bull}, we obtain
$$b^\bullet_r(G) \geq b_r(G) = (b_{1/r}^\bullet(G^*))^r \geq b_{1/r}^\bullet(G^*)^r = b^\bullet_r(G),$$
as above, and again equality holds all along. In particular,\\
$b_r(G) = b_{1/r}^\bullet(G^*)^r = (b_{1/r}^*(G))^r$.

If $b_r$ or $b_{1/r}$ is equal to $0$, then the same argument shows that 
both $b_r$ and $b_{1/r}$ are equal to $0$. This completes the proof.
\end{proof}

The arguments in the proofs of \Lr{o box} and \Tr{o box site} can be used to prove \Lr{pint mpint}.

\begin{proof}[Proof of \Lr{pint mpint}]
The inequality $b_r^\circ \leq b_r^\odot$ is obvious.

For the reverse inequality we will focus on the case of site-\pint s. We will construct an array of a certain number of boxes of possibly different sizes, then place the component site-\pint s of an arbitrary site-\mpint\ inside the boxes, and connect them with short paths to obtain a new site-\pint . 

We claim that the number of choices for the shapes of the components of any site-\mpint\ of size $n$ grows subexponentially in $n$. Indeed, the number of choices for the shape of any site-\pint\ grows polynomially in its size. \Tr{HRthm} shows that there are most $s^{\sqrt{n}}$ choices for the component sizes of any site-\mpint\ of size $n$, where $s>0$ is a constant. Hence it suffices to show that a site-\mpint\ of size $n$ comprises $O(\sqrt{n})$ site-\pint s.

Let $X=(\ldots,x_{-1},x_0=o,x_1,\ldots)$ be a quasi-geodesic in $G$ containing $o$ and let $X^+=(x_0,x_1,\ldots)$ be the one of the two $1$-way infinite subpaths of $X$ starting from $o$. Consider a site-\mpint\ $P$ of size $n$. As remarked in the proof of \Prr{br ineq}, $P$ contains at least one of the first $ln+1$ vertices of $X^+$. We enumerate the component site-\pint s $P_1,P_2,\ldots,P_k$ of $P$ according to the first vertex of $X^+$ that they contain. As the $P_i$'s are disjoint, we have $l_i<l_{i+1}$, where $l_i$ is the index of the first vertex of $X^+$ that $P_i$ contains. Since $l_1\geq 0$, we deduce that $l_i\geq i-1$ for every $i$. Hence, we obtain
$$|P_i|\geq (i-1)/l$$
for every $i=1,2,\ldots,k$, which implies that 
$$n=\sum_{i=1}^k |P_i|\geq \sum_{i=1}^k (i-1)/l=\frac{k(k-1)}{2l}.$$
The latter implies that $k=O(\sqrt{n})$, hence there are $(sn)^{O(\sqrt{n})}$ choices for the shapes of the components site-\pint s of any site-\mpint\ of size $n$.

We can now restrict to a subfamily $K\subset MI_{n,r,\epsilon}$ of size at least $$N:=\dfrac{c_{n,r,\epsilon}}{(sn)^{O(\sqrt{n})}}$$ such that all site-\mpint s of $K$ have the same component sizes, say $\{n_1,n_2,\ldots,n_k\}$, and corresponding component site-\pint s have the same shape. Let $B_1,B_2,\ldots,B_k$ be the boxes of the component site-\pint s. Instead of a grid, we construct an array by placing the above $k$ boxes next to each other. Given an element of $K$, we place its component site-\pint s in their boxes. After moving the boxes, if necessary, we connect them with short paths, as described in the proof of \Lr{o box site}. Arguing as in the proof of \Lr{o box site} we obtain $b_r^\circ \geq b_r^\odot$, as desired.
\end{proof}

Since $\Pr_{p_c}(|S_o|=n)$ does not decay exponentially in $n$, we conclude

\begin{corollary} \label{cor no ed}
Consider site percolation on a graph $G\in \mathcal{S}$ satisfying  \eqref{triangulation}. Then $\Pr_{1-p_c}(|S_o|=n)$ does not decay exponentially in $n$.
\end{corollary}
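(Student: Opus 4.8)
The plan is to transfer the (already established) failure of exponential decay of $\Pr_{p_c}(|S_o|=n)$ to the point $1-p_c$ by invoking \Tr{S_o II} twice, with the site-interface duality \Tr{dual site} applied in between, and using that the universal function $f$ satisfies the same functional equation $f(r)=f(1/r)^r$ as $b_r$ does on triangulated lattices.

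First I would recall that $\Pr_{p_c}(|S_o|=n)$ does not decay exponentially in $n$, so that \Tr{S_o II}, applicable since $G\in\mathcal S$, gives at $p=p_c\in(0,1)$ that
$$b_{r(p_c)}(G)=f(r(p_c)),$$
where $r(p)=\frac{1-p}{p}$. Writing $r_0:=r(p_c)$, we then have $r_0>0$ and $b_{r_0}(G)=f(r_0)>0$. Next, since $G$ satisfies \eqref{triangulation}, \Tr{dual site} yields $b_r(G)=(b_{1/r}(G))^{r}$ for all $r>0$; taking $r=r_0$ and using $b_{r_0}(G)=f(r_0)>0$ gives $b_{1/r_0}(G)=f(r_0)^{1/r_0}$. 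Because $f(1/r)=f(r)^{1/r}$ for every $r>0$---the elementary identity recorded in the introduction---this reads $b_{1/r_0}(G)=f(1/r_0)$. Finally, $1/r_0=1/r(p_c)=\frac{p_c}{1-p_c}=r(1-p_c)$, so
$$b_{r(1-p_c)}(G)=f(r(1-p_c)),$$
and a second application of \Tr{S_o II}, now at $p=1-p_c\in(0,1)$, shows that $\Pr_{1-p_c}(|S_o|=n)$ does not decay exponentially in $n$.

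There is no serious obstacle: the statement is a short consequence of \Tr{S_o II} and \Tr{dual site}. The one point I would check is that we avoid the degenerate branch of \Tr{dual site} on which $b_r$ vanishes---but this does not arise here, since $f(r_0)>0$ for every $r_0>0$ and $b_{r_0}(G)=f(r_0)$, so the multiplicative relation $b_{r_0}=(b_{1/r_0})^{r_0}$ holds without qualification. (Here $p_c$ denotes the site-percolation threshold of $G$; both $p_c$ and $1-p_c$ lie in $(0,1)$ for $G\in\mathcal S$, so \Tr{S_o II} legitimately applies at each.)
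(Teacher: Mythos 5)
Your argument is correct and is essentially the paper's own proof: both start from the failure of exponential decay at $p_c$, apply Theorem~\ref{S_o II} to get $b_{r(p_c)}=f(r(p_c))$, transfer this via Theorem~\ref{dual site} together with $r(1-p_c)=1/r(p_c)$ and $f(1/r)=f(r)^{1/r}$ to obtain $b_{r(1-p_c)}=f(r(1-p_c))$, and then apply Theorem~\ref{S_o II} again at $1-p_c$. Your extra remark about avoiding the degenerate branch of the duality is a fine (if unneeded in the paper's phrasing) sanity check.
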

\begin{proof}
Notice that $r(1-p_c)=1/r(p_c)$. The fact that $\Pr_{p_c}(|S_o|=n)$ does not decay exponentially in $n$ implies that $b_{r(p_c)}=f(r(p_c))$. \Tr{dual site} shows that 
$$b_{r(1-p_c)}=b_{r(p_c)}^{1/r(p_c)}=f(r(p_c))^{1/r(p_c)}=f(r(1-p_c)).$$ Using \Tr{S_o} we conclude that $\Pr_{1-p_c}(|S_o|=n)$ does not decay exponentially in $n$.
\end{proof}

\section{Continuity} \label{cont dec} 

In this section we study the analytical properties of $b_r$. To avoid repeating the arguments in the proof of \Lr{o box site} and considering cases according to whether we study \pint s or site-\pint s, we will prove the results for \pint s in $\Z^d$ and $\mathbb{T}^d$. 
\

We first prove that the $\limsup$ in the definition of $b_r$ can be replaced by $\lim$. 
\begin{proposition} \label{limit}
Let $G\in \mathcal{S}$. Then for every $r$ such that $b_r>1$ and for all but countably many $\epsilon>0$ the limit $\lim_{n\to\infty} c_{n,r,\epsilon}(G)^{1/n}$ exists. 
\end{proposition}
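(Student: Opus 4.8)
The statement has the shape of a Fekete/supermultiplicativity argument, complicated by the fact that $c_{n,r,\epsilon}$ depends on the two parameters $n$ and $\epsilon$ simultaneously. Fix $r$ with $b_r>1$ and set $\beta(\epsilon):=\limsup_{n\to\infty}c_{n,r,\epsilon}^{1/n}$ and $\underline\beta(\epsilon):=\liminf_{n\to\infty}c_{n,r,\epsilon}^{1/n}$. Both are non-decreasing functions of $\epsilon$, and since $b_r=\lim_{\epsilon\to0}\beta(\epsilon)=\inf_{\epsilon>0}\beta(\epsilon)$, we have $\beta(\epsilon)\ge b_r>1$ for every $\epsilon>0$. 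The proposition is exactly the assertion that $\underline\beta(\epsilon)=\beta(\epsilon)$ for all but countably many $\epsilon$. The entire argument reduces to the following. \emph{Claim: for all $0<\epsilon_1<\epsilon_2$ we have $\underline\beta(\epsilon_2)\ge\beta(\epsilon_1)$.} Granting the Claim, the proposition follows by a soft monotonicity argument: a non-decreasing function $\epsilon\mapsto\beta(\epsilon)$ has at most countably many discontinuities, and at any continuity point $\epsilon^{*}$ we obtain $\underline\beta(\epsilon^{*})\ge\sup_{\epsilon_1<\epsilon^{*}}\beta(\epsilon_1)=\beta(\epsilon^{*})$ from the Claim together with (left-)continuity of $\beta$ at $\epsilon^{*}$; since always $\underline\beta\le\beta$, equality holds, i.e.\ the limit $\lim_n c_{n,r,\epsilon^{*}}^{1/n}$ exists.

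To prove the Claim I would put $\eta:=\epsilon_2-\epsilon_1$ and, along a subsequence realising $\beta(\epsilon_1)$, fix a large $n_0$ and a small $\delta=\delta(n_0)\to 0$ with $c_{n_0,r,\epsilon_1}\ge(\beta(\epsilon_1)-\delta)^{n_0}$, noting $\beta(\epsilon_1)-\delta>1$ since $\beta(\epsilon_1)>1$ — this is the one place $b_r>1$ is essential. As in the proof of \Prr{o box}, among the interfaces counted by $c_{n_0,r,\epsilon_1}$ there are only polynomially many possible ``shapes'', so one may pass to a subfamily $K$ of at least $N_0:=c_{n_0,r,\epsilon_1}/\mathrm{poly}(n_0)$ interfaces all of the same shape (same box $\mathcal B$, same extremal vertices on each face of $\mathcal B$). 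One then concatenates interfaces $K_1,\dots,K_k\in K$ in a one-dimensional chain: translate each box so that the relevant extremal vertices of consecutive boxes coincide, and attach a path of length $2$ from each such vertex to its interface, exactly as in \Prr{o box} but without the ``first slab'' complication of the $d$-dimensional grid. The same edge-cut analysis as there shows the resulting connected graph $Q$ is again an interface, of size $kn_0+O(k)$ and boundary size $\sum_i|\partial K_i|+O(k)$, hence lying in $\bigl[(r-\epsilon_1)kn_0-O(k),\,(r+\epsilon_1)kn_0+O(k)\bigr]$; moreover distinct sequences $(K_i)$ give distinct $Q$, since each $K_i$ is recovered as the part of $Q$ inside the $i$-th box.

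To hit a prescribed size \emph{exactly}, I would additionally glue to one end of the chain a single fixed ``padding'' interface of size $j<n_0+O(1)$ — such interfaces exist for every $j$ in $\Z^d$ and $\mathbb{T}^d$ (e.g.\ the interface of a straight path-cluster) — at the cost of $O(n_0)$ further edges. For every target size $N$ large enough in terms of $n_0$ and $\eta$, this construction produces at least $N_0^{\,k}$ interfaces of size exactly $N$, with $k\ge(1-o(1))N/n_0$, whose boundary-to-volume ratio differs from $r$ by at most $\epsilon_1+O(1/n_0)+O(n_0/N)<\epsilon_1+\eta=\epsilon_2$. Hence $c_{N,r,\epsilon_2}\ge N_0^{\,k}$; taking $N$-th roots, then $N\to\infty$ (killing the $O(n_0/N)$ term), then $n_0\to\infty$ (so that $\mathrm{poly}(n_0)^{1/n_0}\to1$, $\delta\to0$, $O(1/n_0)\to0$) yields $\underline\beta(\epsilon_2)\ge\beta(\epsilon_1)$, which is the Claim.

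\textbf{Where the difficulty lies.} The conceptual obstacle is the two-parameter bookkeeping: $\log c_{n,r,\epsilon}$ cannot be expected to be even approximately superadditive for a \emph{fixed} $\epsilon$, because concatenation perturbs the volume-to-surface ratio; the remedy is to let $\epsilon$ grow slightly in the output and then exploit monotonicity in $\epsilon$ (this is precisely why the statement only controls \emph{all but countably many} $\epsilon$). The technically fussy points are all about the gluing: verifying that $Q$ is genuinely an interface (which is inherited verbatim from \Prr{o box}, and from \Lr{o box site}/\Lr{pint mpint} if one wants general $G\in\mathcal S$), that the construction loses only a subexponential factor so that $\sim c_{n_0,r,\epsilon_1}^{\,k}$ interfaces survive, and that one can realise \emph{every} sufficiently large $N$ rather than an arithmetic progression — this last point being exactly what forces in the bounded-size padding interface. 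Everything beyond that is elementary estimation.
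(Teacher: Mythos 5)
Your proposal is correct and follows essentially the same route as the paper: the paper likewise reduces to the continuity points of the (monotone in $\epsilon$) function $\liminf_{n} c_{n,r,\epsilon}^{1/n}$, and obtains the approximate supermultiplicativity by concatenating same-shape interfaces in a one-dimensional array of boxes, with a short attached path of variable length $s+4$ playing the role of your padding interface so that every sufficiently large size $k=m(n+4)+s$ is realised. The only cosmetic difference is that the paper phrases the conclusion via $\epsilon'=\epsilon+o(1)$ and continuity of the $\liminf$ at $\epsilon$, rather than via your two-parameter claim $\liminf$ at $\epsilon_2$ versus $\limsup$ at $\epsilon_1<\epsilon_2$.
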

\begin{proof} 
We will first show that $$\limsup_{n\to \infty} c_{n,r,\epsilon}^{1/n} = \liminf_{n\to\infty} c_{n,r,\epsilon}^{1/n}$$ holds for any $\epsilon>0$ at which the function $\liminf_{n\to \infty} c_{n,r,\epsilon}^{1/n}$ is continuous. Since $\liminf_{n\to\infty} c_{n,r,\epsilon}^{1/n}$ is an increasing function of $\epsilon$, its points of discontinuity are countably many \cite{BabyRudin}. 

Let $\epsilon$ be a point of continuity of $\liminf_{n\to\infty} c_{n,r,\epsilon}^{1/n}$ and $n\in \N$. By combining elements of $I_{n,r,\epsilon}$ we will construct \pint s of arbitrarily large size and surface-to-volume ratio between $r-\epsilon'$ and $r+\epsilon'$ for some $\epsilon\leq \epsilon'=\epsilon+o(1)$. Let $0\leq s\leq n+3$ be an integer. We repeat the idea of \Prr{o box}, but instead of a grid, we construct an array of $m$ boxes for some $m>0$. We place inside each box an element of $I_{n,r,\epsilon}$ and after moving the boxes, if necessary, we connect consecutive \pint s using paths of length $4$, similarly to the proof of \Prr{o box}. We also attach a path of length $s+4$, that is incident to the last \pint\ and disjoint from any of the previous \pint s. In this way we produce an element $Q$ of $I_{k,r,\epsilon'}$, where $\epsilon'=\epsilon+o(1)$ and $k$ is any integer of the form $k=m(n+4)+s$. There are roughly $c_{n,r,\epsilon}^m$ choices for $Q$. Since $s$ ranges between $0$ and $n+3$, for every fixed $n$, all but finitely many $k$ can be written in this form for some $m\geq 1$. Taking the $k$th root and then the limit as $m\to \infty$ we conclude that $\liminf_{k\to\infty} c_{k,r,\epsilon'}^{1/k}\geq c_{n,r,\epsilon}^{1/(n+4)}$. Letting $n\to \infty$ we obtain $\liminf_{n\to \infty} c_{n,r,\epsilon}^{1/n} \geq \limsup_{n\to\infty} c_{n,r,\epsilon}^{1/n}$. The above inequality follows from the fact that $\epsilon$ is a point of continuity of $\liminf_{n\to\infty} c_{n,r,\epsilon}^{1/n}$. Hence $\liminf_{n\to \infty} c_{n,r,\epsilon}^{1/n} = \limsup_{n\to\infty} c_{n,r,\epsilon}^{1/n}$, as desired
\end{proof}

The following proposition follows directly from the definition of $b_r$:
\begin{proposition}\label{upper semi}
Let $G\in \mathcal{S}$. Then $b_r(G)$ is an upper-semicontinuous function of $r$.
\end{proposition}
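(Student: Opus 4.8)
The plan is to deduce upper-semicontinuity directly from the definition of $b_r$, using only the monotonicity in $\epsilon$ of the quantities $\limsup_{n\to\infty}c_{n,r,\epsilon}(G)^{1/n}$ together with the fact that $G\in\mathcal{S}$ has bounded degree (so these quantities are finite). Concretely, the goal is to show that $\limsup_{r\to r_0}b_r(G)\le b_{r_0}(G)$ for every $r_0$.

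First I would record the elementary inclusion of the counting families: if $|r-r_0|\le\delta$, then any interface $P$ with $|P|=n$ and $(r-\epsilon)n\le|\partial P|\le(r+\epsilon)n$ also satisfies $(r_0-\epsilon-\delta)n\le|\partial P|\le(r_0+\epsilon+\delta)n$. Hence $I_{n,r,\epsilon}\subseteq I_{n,r_0,\epsilon+\delta}$, and so $c_{n,r,\epsilon}\le c_{n,r_0,\epsilon+\delta}$ for every $n$. Taking $n$-th roots and then $\limsup_{n\to\infty}$ gives
$$\limsup_{n\to\infty}c_{n,r,\epsilon}(G)^{1/n}\ \le\ \limsup_{n\to\infty}c_{n,r_0,\epsilon+\delta}(G)^{1/n}.$$

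Next, writing $g(\eta):=\limsup_{n\to\infty}c_{n,r_0,\eta}(G)^{1/n}$ --- a bounded, non-decreasing function of $\eta\ge 0$, the boundedness coming from the crude bound $c_n(G)\le\Delta^{O(n)}$ --- the right-hand side above is $g(\epsilon+\delta)$. Letting $\epsilon\to 0^+$ (the limit exists by monotonicity, and its value with $r$ in place of $r_0$ is by definition $b_r(G)$) yields, for every $r$ with $|r-r_0|\le\delta$,
$$b_r(G)\ \le\ \lim_{\epsilon\to 0^+}g(\epsilon+\delta)\ =\ \inf_{\eta>\delta}g(\eta).$$
Finally I would take the infimum over $\delta>0$: since $\inf_{\delta>0}\inf_{\eta>\delta}g(\eta)=\inf_{\eta>0}g(\eta)=\lim_{\eta\to 0^+}g(\eta)=b_{r_0}(G)$, we obtain $\limsup_{r\to r_0}b_r(G)\le b_{r_0}(G)$, which is exactly upper-semicontinuity.

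I do not anticipate a genuine obstacle here; the only care needed is the bookkeeping of the interleaved limits $n\to\infty$, $\epsilon\to 0$, $\delta\to 0$ and checking that each monotone limit that appears actually exists, which it does because in each case the function is monotone in the relevant parameter and bounded (by bounded degree). In particular, nothing about the structure of interfaces is used beyond their definition and the exponential bound on their number.
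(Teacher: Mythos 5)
Your argument is correct and is essentially the paper's own proof: both rest on the observation that shrinking the window around a nearby ratio $s$ (or enlarging it around $r_0$) nests one counting family $I_{n,\cdot,\cdot}$ inside another, and then on carefully ordering the monotone limits in $n$, $\epsilon$, $\delta$. The only difference is cosmetic bookkeeping of which point serves as the centre of the window.
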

\begin{proof} 
Let $\epsilon>0$ and $0<\delta<\epsilon/2$. Then for every $r>0$ and for every $s$ with $|r-s|<\epsilon/2$, the interval $(s-\delta,s+\delta)$ is contained in $(r-\epsilon,r+\epsilon)$, 
and the site-\pint s $P$ with $|\partial P|/|P| \in (s-\delta,s+\delta)$ are counted in the set of those site-\pint s with $|\partial P|/|P| \in (r-\epsilon,r+\epsilon)$ as well. Hence, $\limsup_{n\to\infty} c_{n,r,\epsilon}^{1/n}\geq \limsup_{n\to \infty} c_{n,s,\delta}^{1/n}$. Taking limits as $\delta\to 0$, $s\to r$ 
and finally $\epsilon\to 0$, we obtain $b_r \geq \limsup_{s\to r} b_s$. The latter shows that $b_r$ is an upper-semicontinuous function of $r$.
\end{proof}

Next, we prove that $b_r$ is a log-concave function of $r$:

\begin{proposition}\label{concave}
Let $G\in \mathcal{S}$. Then for any $t\in [0,1]$ and any $r,s$ such that $b_r(G),b_s(G)>1$, we have $b_{tr+(1-t)s}(G) \geq b_r(G)^{t} b_s(G)^{1-t}$.
\end{proposition}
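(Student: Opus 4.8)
The plan is to prove the inequality by a gluing construction very much in the spirit of \Prr{o box} and \Lr{o box site}: given $r,s$ with $b_r,b_s>1$ and $t\in[0,1]$, I would build many large interfaces of surface-to-volume ratio close to $tr+(1-t)s$ by concatenating roughly $m$ copies of size-$n$ interfaces of ratio near $r$ together with roughly $m'$ copies of size-$n$ interfaces of ratio near $s$, where $m/(m+m')$ is chosen to approximate $t$. Since the statement is restricted to $\Z^d$ and $\mathbb{T}^d$ (the preamble to the Continuity section says we only prove the continuity-type results for these two lattices), I can lean directly on the clean version of the array construction from \Prr{o box} rather than the more delicate \qtl\ version of \Lr{o box site}.

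First I would fix $\epsilon>0$ and, using that there are exponentially many interfaces in $I_{n,r,\epsilon}$ and in $I_{n,s,\epsilon}$ of each size, pass to subfamilies $K_r\subseteq I_{n,r,\epsilon}$, $K_s\subseteq I_{n,s,\epsilon}$ of cardinality at least $c_{n,r,\epsilon}/p(n)$, resp.\ $c_{n,s,\epsilon}/p(n)$, consisting of interfaces with a common shape (same box $\mathcal B$, same extremal vertices), exactly as in \Prr{o box}; note the shape data can be chosen compatibly so that $r$-type and $s$-type boxes have the same dimensions, since both are forced into the same ambient lattice and we may enlarge the common box. Then, for a positive integer $M$ and integers $m,m'$ with $m+m'=M$ and $|m/M - t|\le 1/M$, I would lay out an array (for $d=2$ a row; for general $d$ a $d$-dimensional grid as in \Prr{o box}) of $M$ boxes, place an arbitrary element of $K_r$ in $m$ designated boxes and an arbitrary element of $K_s$ in the other $m'$, translate slabs so that consecutive extremal vertices match up, and join consecutive interfaces with paths of length two parallel to $v_1$, as in \Prr{o box}. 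The same verification as there (trivial for $d=2$ from the topological definition, and the connectivity/edge-cut argument for $d>2$) shows the resulting connected graph $Q$ is an interface.

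Next I would check the two quantitative points. The size is $|Q| = mn + m'n + O(M\cdot\text{const}) = Mn(1+o(1))$ as $M,n\to\infty$, and the boundary satisfies
$$|\partial Q| = \big(m|\partial P_r| + m'|\partial P_s|\big)(1+o(1)) \in \big[(tr+(1-t)s-\epsilon')|Q|,\ (tr+(1-t)s+\epsilon')|Q|\big]$$
with $\epsilon' = \epsilon + o(1)$, using $|\partial P_r|\in[(r-\epsilon)n,(r+\epsilon)n]$, $|\partial P_s|\in[(s-\epsilon)n,(s+\epsilon)n]$, $m/M\to t$, and the fact that the $O(M)$ attached edges contribute only lower-order terms. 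The number of distinct $Q$ produced is at least $|K_r|^{m}|K_s|^{m'}$ divided by a subexponential-in-$|Q|$ factor: since all interfaces of each type share a fixed shape and the joining paths have bounded length, deleting the attached paths recovers the sequence of building blocks (up to the usual bounded ambiguity), so the count is $|K_r|^m |K_s|^{m'}/2^{o(|Q|)} \ge \big(c_{n,r,\epsilon}^{\,m} c_{n,s,\epsilon}^{\,m'}\big)/(p(n)^M 2^{o(|Q|)})$. Taking $|Q|$-th roots with $|Q|\sim Mn$, then $M\to\infty$ (so $m/M\to t$), then $n\to\infty$, then $\epsilon\to0$, the polynomial factor $p(n)$ and the subexponential correction die, and we obtain $b_{tr+(1-t)s}\ \ge\ b_r^{\,t} b_s^{\,1-t}$.

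The main obstacle is bookkeeping rather than any genuinely new idea: one must choose the placement pattern of $r$-boxes versus $s$-boxes in the $d$-dimensional grid so that (a) the slab-matching translations of \Prr{o box} still work when adjacent boxes may be of different types — this is fine because matching only uses that all boxes share the same shape, which we arranged — and (b) the ratio $m/M$ genuinely converges to $t$ while the error terms $\epsilon'-\epsilon$ and the $o(|Q|)$ ambiguity in the reconstruction count remain uniformly small; care is also needed that when $t$ is irrational one still gets $|m/M-t|\to0$ along a sequence of $M$. One should also separately dispose of the degenerate cases, e.g.\ when $tr+(1-t)s$ equals $r$ or $s$, and note that the hypothesis $b_r,b_s>1$ (together with the construction, which only ever produces interfaces, not losing any) is exactly what guarantees the exponential abundance needed to absorb the subexponential corrections.
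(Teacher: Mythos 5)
Your gluing construction is the same in spirit as the paper's (which combines one block of ratio $\approx r$ and one of ratio $\approx s$, with the weight $t$ encoded in the relative sizes of the two blocks rather than in the number of copies), but your write-up has a genuine gap in the final limiting step. After sending $M\to\infty$ with $n$ fixed you obtain, for each $n$, a lower bound of the form $b_{tr+(1-t)s}\gtrsim \bigl(c_{n,r,\epsilon}/p(n)\bigr)^{t/n}\bigl(c_{n,s,\epsilon}/p(n)\bigr)^{(1-t)/n}$, and you then "let $n\to\infty$". But $b_r$ and $b_s$ are defined via $\limsup_n$, and in general
$\limsup_n\bigl(c_{n,r,\epsilon}^{t/n}\,c_{n,s,\epsilon}^{(1-t)/n}\bigr)$ can be \emph{strictly smaller} than $\bigl(\limsup_n c_{n,r,\epsilon}^{1/n}\bigr)^{t}\bigl(\limsup_n c_{n,s,\epsilon}^{1/n}\bigr)^{1-t}$, because the two limsups may be attained along disjoint subsequences of $n$ (think of two sequences oscillating out of phase). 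Since you use a \emph{common} block size $n$ for both the $r$-type and $s$-type pieces, your argument only recovers the left-hand quantity. The paper closes exactly this hole by first invoking \Prr{limit}: for all but countably many $\epsilon$ the limits $\lim_n c_{n,r,\epsilon}^{1/n}$ and $\lim_n c_{n,s,\epsilon}^{1/n}$ actually exist (this is where the hypothesis $b_r,b_s>1$ is really used — not merely to "absorb subexponential corrections"), after which any subsequence of $n$'s works, and $\epsilon\to0$ is taken along such good values. To repair your proof you must either cite \Prr{limit} at the outset, or allow different block sizes $n_r,n_s$ chosen along subsequences realizing the two limsups and tune the copy counts so that $m n_r/(m n_r+m' n_s)\to t$.

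Two smaller points. First, "enlarge the common box so that $r$-type and $s$-type boxes have the same dimensions" is not innocent: the extremal vertices used for the slab-matching in \Prr{o box} are defined on the faces of the actual box $B(P)$, and an artificially enlarged box may have faces not incident to $\partial P$ at all. The cheap fix is to group all $m$ $r$-blocks contiguously and all $m'$ $s$-blocks contiguously, so only $O(1)$ junctions between differently shaped boxes occur; each such junction costs $O(n)=o(Mn)$ edges and does not perturb the surface-to-volume ratio. (With differently shaped boxes alternating, the $O(n)$-length connectors would appear $\Theta(M)$ times and contribute a constant fraction of $|Q|$, shifting the ratio by a non-vanishing amount.) Second, note that for fixed $n$ the ratio perturbation caused by the attached paths is $O(1/n)$, not $o(1)$ in $M$; this is harmless by monotonicity of $c_{k,\rho,\epsilon}$ in $\epsilon$, but should be stated in the right order of quantifiers.
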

\begin{proof} 
Pick an $\epsilon$ such that both $\lim_{n\to\infty} c_{n,r,\epsilon}^{1/n}$ and $\lim_{n\to\infty} c_{n,s,\epsilon}^{1/n}$ exist. Let $(p_m/q_m)$ be a sequence of rational numbers converging to $t$ such that $q_m\to\infty$. Consider subfamilies $K$, $K'$ of $I_{p_m,r,\epsilon}$ and $I_{q_m-p_m,s,\epsilon}$, where the elements of both $K$ and $K'$ have the same shape (as defined in the proof of \Prr{o box}), and $|K|\geq c_{p_m,r,\epsilon}/P(p_m)$, $|K'|\geq c_{q_m-p_m,s,\epsilon}/P(q_m-p_m)$ for some polynomial $P(x)$. Note that the elements of $K$ and $K'$ share the same boxes $B$ and $B'$, respectively. Place two \pint s, one from $K$ and another from $K'$, in an array of two boxes parallel to $B$ and $B'$, and move the boxes, if necessary, in order to connect the \pint s with short paths. In this way we obtain an \pint\ $Q$ of size roughly $q_m$ and surface-to-volume ratio roughly $tr+(1-t)s$. Notice that we have at least $c_{p_m,r,\epsilon} c_{q_m-p_m,s,\epsilon}/\big(P(p_m) P(q_m-p_m)\big)$ choices for $Q$. Taking the $k$th root of the latter expression, where $k=|Q|$, and letting $m\to \infty$ gives
$$\lim_{n\to \infty} c_{n,r,\epsilon}^{t/n} \lim_{n\to \infty} c_{n,s,\epsilon}^{(1-t)/n}.$$ Letting $\epsilon\to 0$ along a sequence of points such that both $\lim_{n\to\infty} c_{n,r,\epsilon}^{1/n}$ and $\lim_{n\to\infty} c_{n,s,\epsilon}^{1/n}$ exist, we obtain $b_{tr+(1-t)s} \geq b_r^t b_s^{1-t}$ as desired.
\end{proof}

We expect \Prr{concave}, and as a result \Tr{cont} below, to hold in much grater generality than $G\in \mathcal{S}$, namely for all 1-ended \Cg s. In order to be able to put several \pint s close to each other to connect them with short paths as in the above proof, it could be handy to use \cite[Lemma~6]{BanStTim}.

\medskip
Let $I$ be the closure of the set of $r$ such that $b_r>1$. \Prr{concave}, combined with \Prr{upper semi}, easily imply
\begin{theorem}\label{cont}
Let $G\in \mathcal{S}$. Then $b_r(G)$ is a continuous function of $r$ on $I$. 
\end{theorem}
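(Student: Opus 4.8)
The plan is to deduce continuity on $I$ from the two preceding propositions: \Prr{concave} (log-concavity) gives continuity on the interior of $I$, and \Prr{upper semi} (upper-semicontinuity) upgrades this to the endpoints.

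First I would record that $J:=\{r>0 : b_r(G)>1\}$ is an interval: if $b_r,b_s>1$ then $b_{tr+(1-t)s}\ge b_r^{t}b_s^{1-t}>1$ for every $t\in[0,1]$ by \Prr{concave}, so $J$ is convex. Since $I=\overline{J}$ by definition, $I$ is a closed interval $[\alpha,\beta]$ (possibly degenerate, possibly with $\beta=+\infty$), and its interior $(\alpha,\beta)$ is contained in $J$. On $(\alpha,\beta)$ the function $g(r):=\log b_r(G)$ is therefore finite (finiteness being \eqref{r ineq}) and, by \Prr{concave}, concave. A finite concave function on an open interval is continuous, so $g$ — and hence $b_r=e^{g(r)}$ — is continuous on $(\alpha,\beta)$.

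It then remains to treat the endpoints, say the left endpoint $\alpha$ (the case of $\beta$ is symmetric, and the situations where $I$ is empty, a single point, or has $\beta=+\infty$ are trivial); within $I$ the only approach to $\alpha$ is from the right. By \Prr{upper semi}, $b$ is upper-semicontinuous, so $\limsup_{s\downarrow\alpha}b_s\le b_\alpha$, and it suffices to prove the matching bound $\liminf_{s\downarrow\alpha}b_s\ge b_\alpha$. If $b_\alpha>1$, then $\alpha\in J$ and $J$ contains $[\alpha,\beta)$, so $g$ is concave on $[\alpha,\beta)$; writing an interior point $s$ as $\lambda_s\alpha+(1-\lambda_s)y$ with $y\in(\alpha,\beta)$ fixed and $\lambda_s\to 1$ as $s\downarrow\alpha$, concavity gives $g(s)\ge\lambda_s g(\alpha)+(1-\lambda_s)g(y)\to g(\alpha)$, whence $\liminf_{s\downarrow\alpha}b_s\ge b_\alpha$. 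If instead $b_\alpha\le 1$, then since $b_s>1$ for all $s\in(\alpha,\beta)$ we trivially have $\liminf_{s\downarrow\alpha}b_s\ge 1\ge b_\alpha$ (and the two inequalities then force $b_\alpha=1$). In either case $\lim_{s\downarrow\alpha}b_s=b_\alpha$, and the proof is complete.

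The interior is essentially automatic from the classical continuity of concave functions; I expect the only delicate part to be the endpoint bookkeeping — making sure \Prr{concave} is invoked on the half-open interval $[\alpha,\beta)$ exactly in the case $b_\alpha>1$, and that \Prr{upper semi} supplies precisely the one inequality that concavity alone does not.
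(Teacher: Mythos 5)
Your proof is correct and follows essentially the same route as the paper's: log-concavity (\Prr{concave}) supplies the lower bound on $\liminf_{s\to r} b_s$ and upper-semicontinuity (\Prr{upper semi}) the matching upper bound, with the endpoint case $b_r=1$ handled separately. The only cosmetic difference is that on the interior you invoke the classical continuity of finite concave functions, whereas the paper runs the $t=1/2$ concavity estimate directly at every point.
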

\begin{proof}
By \Prr{concave}, $I$ is an interval, and the only possible $r\in I$ such that $b_r=1$, are its endpoints. For every $r$ in $I$, we have $\limsup_{s \to r} b_s \leq b_r$ by \Prr{upper semi}. Using \Prr{concave} for $t=1/2$ we obtain $\liminf_{s\to r} b_{(r+s)/2} \geq \sqrt{b_r \liminf_{s \to r} b_s}$ for every $r$ such that $b_r>1$. This immediately implies that $\liminf_{s\to r} b_s\geq b_r$ and thus $\lim_{s\to r} b_s = b_r$. 

On the other hand, if $b_r=1$ for some of the endpoints of $I$, then
\Prr{upper semi} and the fact that $b_s>1$ for $s$ in the interior of $I$, give that $$\mathop{\lim_{s\to r}}_{s\in I} b_s= 1.$$ Therefore, $b_r$ is a continuous function on $I$.
\end{proof}

Having proved that $b_r$ is a continuous function, the next natural question is whether it is differentiable. It turns out that this holds everywhere except, perhaps, on a countable set.

\begin{corollary} \label{differentiable}
Let $G\in \mathcal{S}$. Then $b_r(G)$ is differentiable for all but countably many $r$.
\end{corollary}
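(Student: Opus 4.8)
The plan is to read off the statement from the log‑concavity of $b_r$ established in \Prr{concave}, together with the classical fact that a concave function on an open interval is differentiable outside a countable set.

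First I would put $J:=\{r>0 : b_r(G)>1\}$ and $g(r):=\log b_r(G)$ for $r\in J$. By \Prr{concave}, for all $r,s\in J$ and $t\in[0,1]$ we have $b_{tr+(1-t)s}(G)\ge b_r(G)^t b_s(G)^{1-t}$; hence $J$ is an interval whose closure is the interval $I$ of \Tr{cont} (it may fail to contain $J$ only at the two endpoints of $I$), and $g$ is a real‑valued concave function on $J$, in particular on the open interval $I^\circ:=\operatorname{int}(I)\subseteq J$. Then I would invoke the standard structure theory of concave functions on an open interval: the one‑sided derivatives $g'_+(r)\le g'_-(r)$ exist everywhere, $g'_+$ is non‑increasing, hence continuous off a countable set $D$, and at every point of $I^\circ\setminus D$ one has $g'_-(r)=g'_+(r)$, so $g$ is differentiable there. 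Since $b_r=e^{g(r)}$ on $I^\circ$ and $\exp$ is differentiable, $b_r(G)$ is differentiable at every $r\in I^\circ\setminus D$, that is, off a countable subset of $I^\circ$. (The facts used here — existence and monotonicity of one‑sided derivatives of a concave function, and the countability of the discontinuity set of a monotone function — are covered by \cite{BabyRudin}, already cited in \Prr{limit}.)

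It then remains to deal with the countably many — in fact at most two — points of $I\setminus I^\circ$, and with $r\notin I$. For $r\notin I$ we have $b_r(G)\le 1$; moreover, for every $\epsilon>0$ the number $c_{n,r,\epsilon}(G)$ is a non‑negative integer, so $\limsup_{n\to\infty} c_{n,r,\epsilon}(G)^{1/n}$ equals $0$ when $c_{n,r,\epsilon}=0$ for all large $n$ and is $\ge 1$ otherwise; hence $b_r(G)\in\{0,1\}$ for every $r\notin I$. Thus on $\R_+\setminus I$ the function $b_r(G)$ is $\{0,1\}$‑valued, so it is differentiable, with derivative $0$, at every point at which it is locally constant; the only possible remaining exceptional points are those where $b_r(G)$ jumps between the values $0$ and $1$. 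These form a closed set (by the upper semicontinuity of \Prr{upper semi}), and one checks it is finite by the same combinatorial gluing used in \Prr{concave}: if $c_{n,r,\epsilon}(G)\ge 2$ for some $n$ and some small $\epsilon$, concatenating $k$ copies of such an interface with short connecting paths produces $(c_{n,r,\epsilon})^k$ up to a polynomial factor interfaces of size $\approx kn$ and ratio $\approx r$, forcing $b_r(G)>1$ and hence $r\in I$; so outside $I$ the quantity $c_{n,r,\epsilon}$ is bounded by $1$, and the polynomial growth of $G$ leaves only finitely many ``transition'' values of $r$. Collecting the three exceptional sets — the countable set $D\subset I^\circ$, the at most two points of $I\setminus I^\circ$, and the finite set of transition points — gives a countable set off which $b_r(G)$ is differentiable.

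The only genuinely substantive input is \Prr{concave}, which is already available, so no serious obstacle is expected; the one point requiring a little care is the bookkeeping for $r$ outside $I^\circ$, and in particular the (routine but not entirely trivial) verification that $b_r$ takes only the values $0,1$ there and does so with finitely many jumps.
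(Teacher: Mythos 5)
Your first paragraph is, in substance, exactly the paper's proof: \Prr{concave} makes $\log b_r$ concave on the interval $J=\{r: b_r(G)>1\}$, and a concave function on an interval is differentiable off a countable set (the paper simply cites \cite{RockConvex} for this), whence the same holds for $b_r=e^{\log b_r}$ there. On the region where the paper's argument operates, your proposal is correct and identical in approach.

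The extra bookkeeping you add for $r\notin I$ addresses a point that the paper's two-line proof silently ignores, and your observation that $b_r(G)\in\{0,1\}$ off $I$ is correct (each $c_{n,r,\epsilon}$ is a non-negative integer, so each $\limsup_n c_{n,r,\epsilon}^{1/n}$ is $0$ or at least $1$). But the finiteness claim for the set of transition points does not hold up. Concatenating $k$ elements of $I_{n,r,\epsilon}$ produces, as in \Prr{limit}, elements of $I_{k(n+4)+s,\,r,\,\epsilon'}$ for some fixed $\epsilon'\geq\epsilon$ depending on $n$; this yields $\limsup_k c_{k,r,\epsilon'}^{1/k}\geq c_{n,r,\epsilon}^{1/(n+4)}>1$ for that one $\epsilon'$ only, and since $b_r$ is defined by letting $\epsilon\to 0$, it does not force $b_r>1$, i.e.\ it does not force $r\in I$. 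Moreover, even granting $c_{n,r,\epsilon}\leq 1$ off $I$, the set $\{r\notin I: b_r=1\}$ is just a relatively closed set (essentially the accumulation points of the achievable ratios $|\partial P|/|P|$), and ``polynomial growth of $G$ leaves only finitely many transition values'' is an assertion, not an argument, that its boundary is finite or even countable. Since the paper offers nothing at all for $r\notin I$, your write-up is at least as complete as the published proof; but what both arguments actually establish is differentiability off a countable subset of the interior of $I$, and the behaviour of $b_r$ outside $I$ would need a separate (so far missing) argument.
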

\begin{proof}
By \Prr{concave}, $\log b_r$ is a concave function, hence differentiable everywhere except for a countable set \cite{RockConvex}. It follows immediately that this holds for $b_r$ as well.
\end{proof}

\section{An analogue of the Cheeger constant for \pint s} \label{analogue}

In this section we define a variant $I(G)$ of the Cheeger constant as the infimal surface-to-volume ratio over all interfaces rather than arbitrary finite subgraphs of $G$. 
\medskip

Given a graph $G$ and a finite set of vertices $S$, we define the edge boundary $\partial_E S$ of $S$ to be the set of edges of $G$ with one endvertex in $S$ and one not in $S$. The vertex boundary $\partial_V S$ of $S$ is defined to be the set of vertices in $V\setminus S$ that have a neighbour in $S$. The edge Cheeger constant of $G$ is defined as $h_E(G)=\inf_S \frac{|\partial_E S|}{|S|}$, where the infimum is taken over all finite sets $S$ of vertices. The vertex Cheeger constant of $G$ is defined as $h_V(G)=\inf_S \frac{|\partial_V S|}{|S|}$, where the infimum is taken again over all finite sets of vertices $S$. In \cite{BeSchrPer} Benjamini \& Schramm proved that for all non-amenable graphs $G$, $p_c(G)\leq\frac{1}{h_E(G)+1}$ and $\pcs(G)\leq\frac{1}{h_V(G)+1}$.

Analogously to the Cheeger constant, we define $I_E(G):=\inf_{P} \frac{|\partial P|}{|P|}$, $I_V(G):=\inf_{Q} \frac{|\partial Q|}{|Q|}$ where the infimum ranges over all \pint s $P$ and site-\pint s $Q$, respectively. It is possible that $I_E(G)>0$ and $I_V(G)>0$ while $h_E(G)=0$ and $h_V(G)=0$, e.g.\ for Cayley graphs of amenable finitely presented groups.

In the following theorem we prove an upper bound on $p_c$ which is reminiscent of the result of Benjamini \& Schramm. Our result applies to graphs for which \labtequ{weak isop}{there exists a function $g(n)$ of sub-exponential growth with the property that for every connected subgraph $H$ of $G$ with $n$ vertices, we have $|\partial^V H| \leq g(n)$.}
This assumption is satisfied, for example, by the class of graphs $\mathcal{S}$  that we were working in the previous section for $g(n)=n^{\frac{d-1}{d}}$. It has the following implication. Consider a vertex $o$ of $G$ and let $X$ be an infinite geodesic starting from $o$. Let also $P$ be a site-\pint\ of $o$ such that $|\partial P|=n$. Then \labtequ{iso}{$P$ contains one of the first $g(n)$ vertices of $X$.}

\begin{theorem}\label{cheeger-like}
Let $G$ be an $1$-ended, $2$-connected graph $G$ satisfying \eqref{weak isop}. Then for bond percolation on $G$ we have $p_c(G)\leq \frac{1}{I_E(G)+1}$, and for site percolation on $G$ we have $\pcs(G)\leq \frac{1}{I_V(G)+1}$.
\end{theorem}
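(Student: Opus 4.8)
The plan is to run a first-moment estimate over (site-)\pint s; I describe the site case with $I_V(G)$, the bond case being identical with $I_E(G)$ and (bond-)\pint s. It suffices to prove that every $p$ with $p>\frac{1}{I_V(G)+1}$, equivalently with $r(p):=\frac{1-p}{p}<I_V(G)$, is supercritical for site percolation, i.e.\ $\theta(p):=\Pr_p(|C_o|=\infty)>0$. Let $N=N(\omega)$ be the number of occurring site-\pint s of $o$. By \Tr{unique} and its converse, $\{|C_o|<\infty\}=\bigsqcup_{P}\{S_o=P\}$, the union ranging over site-\pint s $P$ of $o$, while $\{S_o=P\}\subseteq\{P\text{ occurs}\}$; hence
\[
\Ex_p[N]=\sum_{P}\Pr_p(P\text{ occurs})=\sum_{P}p^{|P|}(1-p)^{|\partial P|},
\]
so it is enough to show $\Ex_p[N]<\infty$, since (see the last paragraph) $\Ex_p[N]<\infty$ forces $\theta(p)>0$.

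The key ingredient is a \emph{deterministic} bound at each scale. Fix an infinite geodesic $X=x_0x_1\cdots$ starting at $o$. By \eqref{iso}, every site-\pint\ $P$ of $o$ with $|\partial P|=n$ contains one of $x_0,\dots,x_{g(n)-1}$, and by \Lr{scs disjoint} distinct occurring site-\pint s are vertex-disjoint; so in \emph{every} configuration at most $g(n)$ occurring site-\pint s of $o$ have boundary size $n$. Taking expectations at an arbitrary parameter $q\in(0,1)$ this gives
\[
\sum_{P:\,|\partial P|=n}q^{|P|}(1-q)^{n}\le g(n),\qquad\text{equivalently}\qquad \sum_{P:\,|\partial P|=n}q^{|P|}\le\frac{g(n)}{(1-q)^{n}}.
\]
Now fix $p>q_0:=\frac{1}{I_V(G)+1}$ and put $\phi(q):=q^{1/I_V(G)}(1-q)$; an elementary computation like the one in \Prr{br eq} shows $\phi$ attains its maximum over $[0,1]$ at $q_0$. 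By the definition of $I_V(G)$, every site-\pint\ satisfies $|P|\le|\partial P|/I_V(G)$, so for $|\partial P|=n$ we have $p^{|P|}=q_0^{\,|P|}(p/q_0)^{|P|}\le q_0^{\,|P|}(p/q_0)^{n/I_V(G)}$ since $p/q_0>1$. Inserting this into the previous display with $q=q_0$ and summing over $n$,
\[
\Ex_p[N]=\sum_{n}(1-p)^{n}\!\!\!\sum_{P:\,|\partial P|=n}\!\!\!p^{|P|}\ \le\ \sum_{n}g(n)\Bigl(\frac{(1-p)\,p^{1/I_V(G)}}{\phi(q_0)}\Bigr)^{\!n}\ =\ \sum_{n}g(n)\Bigl(\frac{\phi(p)}{\phi(q_0)}\Bigr)^{\!n}.
\]
Since $p\neq q_0$ and $q_0$ is the unique maximiser of $\phi$, we have $\phi(p)/\phi(q_0)<1$, and as $g$ is subexponential the series converges; thus $\Ex_p[N]<\infty$.

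Finally, $\Ex_p[N]<\infty$ implies $\theta(p)>0$: if $\theta(p)=0$ then $\Pr_p(v\leftrightarrow\infty)=0$ for every vertex $v$ (otherwise, opening a path from $o$ to $v$ gives $\theta(p)>0$ by finite energy and FKG), so a.s.\ there is no infinite cluster; one then checks that a.s.\ $o$ is separated from infinity by vacant edge-cuts arbitrarily far out, each of which, by the interface theory of \Sr{pints site-pints}, contains the boundary of an occurring site-\pint\ of $o$, so there are infinitely many pairwise distinct such \pint s, pairwise disjoint by \Lr{scs disjoint}; hence $N=\infty$ a.s., a contradiction. This gives $\pcs(G)\le\frac{1}{I_V(G)+1}$, and the bond statement follows verbatim with $I_E(G)$, the bond analogue of \eqref{iso}, and $|\cdot|$ counting edges. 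The one genuinely delicate step is the third paragraph: \eqref{iso} supplies only the \emph{deterministic} bound $N_n\le g(n)$, and one has to convert it into a \emph{geometrically} decaying estimate for $\sum_{P:|\partial P|=n}p^{|P|}$ that becomes effective exactly when $p>q_0$; this is where the volume-to-surface bound $|P|\le|\partial P|/I_V(G)$ and the maximisation of $\phi$ at $q_0$ must be combined, and it is crucial \emph{not} to bound the number of site-\pint s of a given scale directly, which would lose an extra geometric factor and yield only an upper bound on $\pcs$ bounded away from $q_0$. A secondary subtlety is that this argument delivers $\Ex_p[N]<\infty$ rather than the cruder $\Pr_p(|C_o|<\infty)<1$, so the reduction in the last step --- that $\theta(p)=0$ would force infinitely many nested occurring interfaces around $o$ --- is essential.
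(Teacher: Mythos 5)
Your first-moment estimate is correct and is essentially the same computation as in the paper's proof (there phrased as $p^m(1-p)^n\le q^m(1-q)^n\,(f(r))^n p^{rn}(1-p)^n$ for $m\le rn$ with $r=1/I_E(G)$, together with the deterministic subexponential bound on the number of occurring \mpint s of a given boundary size). The problem is the final reduction: the claim that $\Ex_p[N]<\infty$ forces $\theta(p)>0$ is false, and the paper's own Remark immediately after the proof warns against exactly this trap (``the exponential decay of $\Ex_p(\mathcal{P}_n)$ does not imply, in general, that $p>p_c$''). The false sub-claim is that, on the event of no infinite cluster, every sufficiently distant vacant cutset around $o$ contains the boundary of an \emph{occurring} \pint\ of $o$. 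Occurrence requires the $P$-part to be fully occupied, and by \Tr{unique} the occurring \pint s of $o$ are in bijection with the finite occupied clusters $C$ for which $\partial C$ separates $o$ from infinity, i.e.\ with occupied clusters \emph{surrounding} $o$. In the subcritical phase there is typically no occupied shell just inside a distant vacant cutset: on $\Z^2$ with $p$ small, a surrounding cluster meeting $\partial B(o,n)$ must have $\Omega(n)$ vertices, so by Borel--Cantelli there are a.s.\ only finitely many occurring \pint s, and indeed $\Ex_p(N_n)$ decays exponentially for every $p<p_c$ (this also follows from \Prr{no exp dec} combined with the subcritical exponential decay of the cluster size distribution). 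Hence $\Ex_p[N]<\infty$ holds on \emph{both} sides of $p_c$ and carries no information about $\theta(p)$; your argument stops at the first display.

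The paper closes this gap by a genuinely different concluding step: the decay estimate is established uniformly for all $p$ in the interval $(\frac{1}{I_E(G)+1},1]$, which makes the inclusion--exclusion expansion
$1-\theta(p)=\sum_{P\in\MS}(-1)^{c(P)+1}\Pr_p(P\text{ occurs})$
over \mpint s converge absolutely there; the analyticity machinery of \cite[Corollary 4.14]{analyticity} then yields that $\theta$ is analytic on that whole interval, and since $\theta$ cannot be analytic at $p_c$ (it vanishes identically below $p_c$ yet equals $1$ at $p=1$), $p_c$ must lie outside the interval. If you want to avoid analyticity, you need an actual Peierls-type argument (e.g.\ conditioning on a large occupied seed and bounding the probability that some occurring \pint\ separates it from infinity), not the ``infinitely many nested occurring interfaces'' claim; the paper notes that such an alternative exists but is not simpler.
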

\begin{proof}
We will prove the assertion for bond percolation. The case of site percolation is similar.

If $I_E(G)=0$ there is nothing to prove, so let us assume that $I_E(G)>0$. Let $q=\frac{1}{I_E(G)+1}$ and $r=1/{I_E(G)}$. We claim that for every $p>q$ the expected number of occurring 
\mpint s of boundary size $n$ decays exponentially in $n$. Indeed, let $\mathcal{P}_{n}$ be the (random) number of occurring \mpint s $P$ with $|\partial P|=n$. Notice that 
\labtequ{bounded ratio}{$|P|\leq rn$ for every \mpint\ $P$ with $|\partial P|=n$,} hence $\mathcal{P}_{n}\leq G(n)$, where $G(n)=e^{o(n)}$, because at most $g(k)$ \pint s of size smaller that $k$ can occur in any percolation instance $\omega$. Indeed, we can choose $G(n)=p(rn)\max\{g(n_1)\ldots g(n_i)\}$, where the maximum ranges over all possible partitions of $rn$. It is not hard to see that $G(n)=e^{o(n)}$.

Now for any $p>q$ and every $m\leq rn$ we have
\begin{gather*}
p^m (1-p)^n=q^m (1-q)^n (p/q)^m \Big(\dfrac{1-p}{1-q}\Big)^n \leq 
\\ q^m (1-q)^n (p/q)^{rn} \Big(\dfrac{1-p}{1-q}\Big)^n =q^m (1-q)^n \big(f(r)\big)^n p^{rn} (1-p)^n,
\end{gather*}
which implies that
\begin{equation}\label{smaller than 1}
\Ex_p(\mathcal{P}_n)\leq \Ex_q(\mathcal{P}_n) \big(f(r)\big)^n p^{rn} (1-p)^n \leq G(n) \big(f(r)\big)^n p^{rn} (1-p)^n.
\end{equation}
Notice that $f(r) p^r (1-p)<1$, which proves our claim.

The fact that $\Ex_p(\mathcal{P}_n)$ decays exponentially in $n$ implies that the sum $$\sum_{n=1}^\infty\sum_{\substack{P\in \MS \\ |\partial P|=n}} (-1)^{c(P)+1} \Pr_p(\text{$P$ occurs})$$  converges absolutely, where $\MS$ is the set of \mpint s, and $c(P)$ is the number of component \pint s of $P$. The inclusion-exclusion principle then implies that we can express $1-\theta$ as 
$$1-\theta(p)=\sum_{n=1}^\infty \sum_{\substack{P\in \MS \\ |\partial P|=n}} (-1)^{c(P)+1} \Pr_p(\text{$P$ occurs})$$ for
every $p>\frac{1}{I_E(G)+1}$. We claim that the latter sum is an analytic function on $(\frac{1}{I_E(G)+1},1]$. To prove this, we will use  \cite[Corollary 4.14]{analyticity}, which states that any such expression is analytic provided the following two conditions are satisfied. Firstly, the event $\{\text{$P$ occurs}\}$ should depend on $\Theta(|\partial P|)$ edges for every $P\in \MS$, and secondly,  for every $q\in (\frac{1}{I_E(G)+1},1)$, there exists $0<c=c(q)<1$ such that $$\sum_{\substack{P\in \MS \\ |\partial P|=n}} (-1)^{c(P)+1} \Pr_p(\text{$P$ occurs})=O(c^n)$$
for every $p\in (q,1)$. 
In our case, the first condition follows from \eqref{bounded ratio}. The second condition follows from \eqref{smaller than 1} and the fact that for every $q\in (\frac{1}{I_E(G)+1},1)$, there exists $0<c=c(q)<1$ such that $f(r) p^r (1-p)\leq c$ for every $p\in (q,1)$. We can thus deduce that $\theta$ is analytic on the interval $(\frac{1}{I_E(G)+1},1]$. \mymargin{If we use Corollary 4.13 we have to say what it says. Or is there something simpler we can use? More details are needed here. Ans: Done} Since $\theta$ is not analytic at $p_c$, it follows that $p_c(G)\leq \frac{1}{I_E(G)+1}$. 
\end{proof}
\begin{remark}
There is an alternative way to prove \Tr{cheeger-like}, without showing that $\theta$ is analytic, by means of \eqref{bounded ratio} and a Peierls-type of argument. However, this alternative way is not simpler. We emphasize that the exponential decay of $\Ex_p(\mathcal{P}_n)$ does not imply, in general, that $p>p_c$, as it can happen that $\Ex_p(\mathcal{P}_n)$ decays exponentially for some $p<p_c$.
\end{remark}

Clearly $I_E(G)\geq h_E(G)$ and $I_V(G)\geq h_V(G)$, hence the above theorem gives an alternative proof of the result of Benjamini \& Schramm mentioned above. 

\begin{question}
For which (transitive) non-amenable graphs does the strict inequality $I_E(G)> h_E(G)$ ($I_V(G)> h_V(G)$) hold?
\end{question}

It has been recently proved   that $I_V(G)> h_V(G)$ holds for the $d$-regular triangulations and quadrangulations of the hyperbolic plane \cite{SitePercoPlane}.

\section{Strict inequalities}

As we already observed, interfaces are a special species of lattice animals. 
In this section, we will show that there are exponentially fewer interfaces than lattice animals for a large class of graphs. Building on this result, we will then show that the strict inequality $\dot{p}_c<\frac{1}{h_V+1}$ holds for a large class of non-amenable graphs. It will be more convenient for us to work with the site variants of interfaces and lattice animals.

We start by introducing some notation regarding site-interfaces on general graphs. Consider an $1$-ended, $2$-connected, bounded degree graph $G$. Furthermore, assume that both \eqref{finitely presented} and \eqref{weak isop} are satisfied. For example, we can take $G$ to be a Cayley graph of an $1$-ended, finitely presented group. We fix a basis of the cycle space consisting of cycles of bounded length and we consider the family of site-interfaces associated to this basis. As we are working with a general graph which might not have any symmetries, the number of site interfaces might depend on the base vertex $o$. For this reason,
given a vertex $o$ of $G$, we define $\dot{b}_{n,o}=\dot{b}_{n,o}(G)$ as the number of site-interfaces $P$ of $o$ with $|P|=n$ and we let $\dot{b}_n=\dot{b}_n(G):=\sup_{o\in V(G)} \dot{b}_{n,o}$. We now define $\dot{b}=\dot{b}(G):=\limsup_{n\to \infty} \dot{b}^{1/n}_n$. 

Next, we formally define lattice site-animals. A \defi{lattice site-animal} $S$ is a set of vertices of $G$ that spans a connected graph. As above, given a vertex $o$ of $G$, we define $\dot{a}_{n,o}=\dot{a}_{n,o}(G)$ as the number of lattice site-animals containing $o$ with $n$ vertices and we let $\dot{a}_n=\dot{a}_n(G):=\sup_{o\in V(G)} \dot{a}_{n,o}$. We now define $\dot{a}=\dot{a}(G):=\limsup_{n\to \infty} \dot{a}^{1/n}_n$. 

For every vertex $v$ of $G$, we let $\cp_v$ be the set of basic cycles containing $v$ and we let $C_v$ be the union of all cycles in $\cp_v$. Using an argument similar to that in the proof of Kesten's pattern theorem for self-avoiding walks \cite{KestenI}, we will prove that 

\begin{theorem}\label{strict}
Let $G$ be an $1$-ended, $2$-connected, bounded degree graph $G$ satisfying \eqref{finitely presented} and \eqref{weak isop}. 
Then $\dot{b}(G)<\dot{a}(G)$.
\end{theorem}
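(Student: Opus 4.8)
The plan follows Kesten's proof of the pattern theorem for self-avoiding walks, the only genuinely new ingredient being a structural observation about site-interfaces. Throughout, fix the basis $\cp$ of $\cc(G)$ consisting of cycles of length at most $t$ guaranteed by \eqref{finitely presented}, so that $C_v \subseteq \overline B(v,t)$ and $|C_v| \le t\Delta^t$ for every vertex $v$, where $\Delta$ is the maximum degree of $G$. Since a site-interface spans a connected subgraph, the vertex set $V(P)$ of a site-interface $(P,\partial P)$ of $o$ is a lattice site-animal containing $o$, and the assignment $(P,\partial P)\mapsto V(P)$ is injective (the vertex layer $V(P)$ together with $1$-endedness determines the finite side and hence $V(\partial P)$); in particular $\dot b \le \dot a$ is immediate, and the point is to gain an exponential factor.

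\emph{Step 1: site-interfaces are thin.} I claim that if $(P,\partial P)$ is a site-interface then $C_v \not\subseteq V(P)$ for every $v \in V(P)$; equivalently, $V(P)$ contains no ball $\overline B(w,t)$. Indeed, $v$ is incident with some edge $e \in P$, and by the last condition of Definition~\ref{pint} there is a \cp-path in $G \setminus \partial P$ connecting $e$ to $\ard{\partial P}{D}$; unravelling the definition of a \cp-path, this means that $e$ and some edge $f \in \partial P$ lie together on a single basic cycle $C^{\circ} \in \cp$. As $e$ is incident with $v$ we get $v \in V(C^{\circ})$, hence $C^{\circ} \subseteq C_v$. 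Since $(P,\partial P)$ is a \emph{site}-interface, $f$ has at most one endpoint in $V(P)$, so it has an endpoint $w \in V(\partial P)$; but then $w \in V(C^{\circ}) \subseteq C_v$ while $w \notin V(P)$, so $C_v \not\subseteq V(P)$. Writing $\dot a^{\star}_{n,o}$ for the number of $n$-vertex site-animals $S \ni o$ with $C_v \not\subseteq S$ for all $v \in S$, this gives $\dot b_{n,o} \le \dot a^{\star}_{n,o}$, hence $\dot b \le \dot a^{\star} := \limsup_n (\sup_o \dot a^{\star}_{n,o})^{1/n}$.

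\emph{Step 2: a pattern theorem for site-animals.} Fix a vertex $v_0$ and set $W := \overline B(v_0,t)$. If $\overline B(w,t) \subseteq S$ for some $w \in S$, then $C_w \subseteq \overline B(w,t) \subseteq S$, so every site-animal counted by $\dot a^{\star}_{n,o}$ contains no translate of $W$. I would therefore prove the Kesten-type statement: there exist $c,\delta > 0$ such that for all large $n$, the number of $n$-vertex site-animals $S \ni o$ containing fewer than $cn$ pairwise disjoint translates of $W$ is at most $(\dot a - \delta)^n$, uniformly in $o$. Applied to the $\dot a^{\star}$-animals (which contain $0 < cn$ translates of $W$), this yields $\dot a^{\star} \le \dot a - \delta$, hence $\dot b \le \dot a - \delta < \dot a$, as required.

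\emph{Step 3: proving the pattern theorem, and the main obstacle.} Given a site-animal $S \ni o$, first linearize it canonically: take the DFS spanning tree of $S$ rooted at $o$ with respect to a fixed ordering of $V(G)$, and read off its contour, a closed walk of length $2(|S|-1)$ visiting every vertex of $S$. Cut the contour into $m = \Theta(|S|)$ consecutive blocks of a fixed length $N$; each block is a connected sub-animal of bounded size. If $S$ has fewer than $cn$ disjoint translates of $W$, all but $O(cn)$ of the blocks are $W$-free, and in a $W$-free block one can, at bounded cost in added vertices, graft a copy of $W$ together with a short connector onto the predetermined first vertex of the block --- here $1$-endedness and connectedness of $G$ supply the space, while \eqref{finitely presented} keeps the operation local. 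Grafting $W$ into an arbitrary subset of $\epsilon m$ of the $W$-free blocks, with $\epsilon \ll c$, produces $\binom{(1-O(c))m}{\epsilon m}$ distinct site-animals, each of size $\le (1+O(\epsilon))n$ and each with $\ge \epsilon m$ translates of $W$; comparing the number of such pairs (thin animal, choice of blocks) with $\sum_{k \le (1+O(\epsilon))n} a_k \le (\dot a + o(1))^{(1+O(\epsilon))n}$ and using that $\binom{\Theta(m)}{\epsilon m}$ beats $(\dot a)^{O(\epsilon n)}$ for $\epsilon$ small gives the bound of Step~2; here bounded degree bounds $|C_v|$, $|W|$ and the number of block types, and $1$-endedness together with \eqref{weak isop} ensure $W$ is genuinely insertable and that typical large animals are not globally thin, so that $\delta$ may be taken positive. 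The step I expect to be hardest is the multiplicity estimate implicit above: a lattice animal, unlike a self-avoiding walk, carries no intrinsic linear order, and the grafted copies of $W$ may interleave with the already thin animal $S$ in ways that make it delicate to recover $S$ and the set of grafted blocks from the enlarged animal. Fixing the DFS contour and grafting only at the predetermined block-start positions is precisely what reduces this recovery cost to $\mathrm{poly}(n)$ per block rather than an exponential amount; checking carefully that the grafting and un-grafting operations are well defined and mutually inverse in the presence of such overlaps is the technical core, exactly as in Kesten's original argument \cite{KestenI}.
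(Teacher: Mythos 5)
Your Step~1 is precisely the structural observation on which the paper's proof rests: by condition \ref{pint b} of Definition~\ref{pint}, every vertex $v$ of a site-interface $P$ lies on a basic cycle that also meets $\partial P$, so $C_v$ is never entirely contained in $P$. From there, however, you take a detour through a general Kesten--Madras pattern theorem for site-animals, and this is where the proposal has a genuine gap. First, the theorem is stated for arbitrary $1$-ended, $2$-connected, bounded-degree graphs satisfying \eqref{finitely presented} and \eqref{weak isop}; there is no transitivity hypothesis, so ``translates of $W=\overline B(v_0,t)$'' is undefined and a pattern theorem phrased in terms of occurrences of one fixed pattern cannot even be formulated. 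What you can graft at a position $w$ is only the local object $C_w$ (or $\overline B(w,t)$) that is actually present in $G$ at $w$; once you make that replacement, the fixed-pattern and DFS-contour machinery buys you nothing. Second, you explicitly defer the technical core --- well-definedness and invertibility of the grafting along the contour --- which is exactly the part that has to be carried out; as written, Steps~2--3 are a plan rather than a proof.

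The paper closes both issues by grafting directly on the interface, with no linearization and no pattern theorem. Let $l$ bound the lengths of the basic cycles and $d$ the degrees. Take a maximal set $S$ of vertices of $P$ pairwise at distance at least $2l+1$, so that $|S|\ge n/d^{4l+2}$ and the $C_v$, $v\in S$, are pairwise disjoint, and for each $T\subseteq S$ with $|T|=\lfloor\varepsilon n\rfloor$ set $P(T)=P\cup\bigcup_{v\in T}C_v$. Your Step~1 observation is then used a second time, to recover $T$ from $P(T)$: among the $v\in S$, exactly those in $T$ satisfy $C_v\subseteq P(T)$, so the $\binom{|S|}{|T|}$ animals $P(T)$ are pairwise distinct for fixed $P$. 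The multiplicity over varying $P$ is controlled purely by locality: any pre-image $(P',T')$ of $P(T)$ must agree with $(P,T)$ outside the $3l$-neighbourhood of $T$, giving at most $2^{d^{3l}|T|}$ pre-images, and $\binom{|S|}{\lfloor\varepsilon n\rfloor}\approx\bigl(1/(\varepsilon d^{4l+2})\bigr)^{\varepsilon n}$ beats this factor once $\varepsilon$ is small. This is a strictly easier counting problem than the one you set yourself, because the grafting sites sit on the rigid, already-embedded interface rather than on blocks of a contour walk whose images may interleave. If you keep your route you would still have to reprove essentially this multiplicity bound, so I would abandon the pattern-theorem framing and argue on the interface directly.
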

\begin{proof}
Consider a site-\pint\ $P$ of $o$ of size $n$. We will introduce a local operation that turns $P$ into a lattice site-animal that is not a site-\pint. To this end, let $l$ be the length of the longest cycle in $\cp$. Consider a set of vertices $S$ of $P$ with the property that any two vertices in $S$ are at distance at least $2l+1$ apart, and $S$ is a maximal set with respect to this property. It is not hard to see that the graphs $C_v$, $v\in S$ are pairwise disjoint. Moreover, we have $|S|\geq n/d^{4l+2}$, where $d$ is the maximal degree of $G$, because the maximality of $S$ implies that any vertex of $P$ is at distance at most $4l+2$ from some vertex in $S$.

Let $0<\varepsilon<1/d^{4l+2}$. For each subset $T$ of $S$ of cardinality $m=\left \lfloor{\varepsilon n}\right \rfloor$, we let $P(T)=(\cup_{v\in T}C_v)\cup P$.
Recall that by \ref{pint b}, a vertex $v$ is included in $P$ only if there is a \cp-path in $G\setminus \partial P$ connecting $v$ to $\ard{\partial P}{D}$, hence $C_v$ is not contained entirely in $P$. It follows that among the vertices in $S$, only those in $T$ have the property that $C_v$ is contained in $P(T)$. Thus the graphs $P(T)$ are pairwise distinct and there are ${|S| \choose m}$ of them.

Our aim now is to find a lower bound for the cardinality of the set $$Y_n:=\bigcup_{P} \{P(T) \mid T\subset S, |T|=m\}.$$ To this end, we need to count the number of pre-images of each $P(T)$. Let $P'$ be a site-\pint\ of size $n$ and $T'$ be a set of vertices of $P'$ such that $P'(T')=P(T)$. Notice that any vertex of $T'$ needs to be at distance at most $2l$ from some vertex in $T$. Hence $P'$ coincides with $P$ on vertices at distance at most $3l$ from some vertex in $T$. Thus there are at most $2^{d^{3l}m}$ possibilities for $P'$ and $T'$.

Overall, we obtain $$|Y_n|\geq \dfrac{{r \choose m}}{2^{d^{3l}m}}\dot{b}_{n,o},$$
where $r=\left \lceil{n/d^{4l+2}}\right \rceil$. Using Stirling's approximation $N!=N^Ne^{-N+o(N)}$, we obtain 
$${r \choose m}=\dfrac{r^r e^{o(n)}}{m^m(r-m)^{r-m}}\geq \Big(\dfrac{r}{m}\Big)^m e^{o(n)}=\Big(\dfrac{1}{\varepsilon d^{4l+2}}\Big)^{\varepsilon n+o(n)}.$$

We can now deduce that
$$|Y_n|\geq \Big(\dfrac{1}{\varepsilon C}\Big)^{\varepsilon n+o(n)}\dot{b}_{n,o},$$
where $C=2^{d^{3l}}d^{4l+2}$. Choosing $\varepsilon=\big(C(2\dot{b})^{d^l}\big)^{-1}$ we obtain that $$|Y_n|\geq (2\dot{b})^{d^l \varepsilon n+o(n)}\dot{b}_{n,o}.$$

We claim that $$\dot{a}_{k,v}\geq (2\dot{b})^{d^l \varepsilon n+o(n)}\dot{b}_{n,o},$$ for some $n\leq k \leq n+d^l m$ and some vertex $v$. Indeed, the size of each element of $Y_n$ varies between $n$ and $n+d^l m$, so for some $n\leq k \leq n+d^l m$, the number of elements of $Y_n$ of size $k$ is at least $(2\dot{b})^{d^l \varepsilon n+o(n)}\dot{b}_{n,o}$. Now \eqref{iso} implies that for some vertex $v$, the number of elements of $Y_n$ of size $k$ that contain $v$ is at least $(2\dot{b})^{d^l \varepsilon n+o(n)}\dot{b}_{n,o}$, which proves the claim. 

Choosing some $o$ that maximizes $\dot{b}_{n,o}$, we obtain that 
$$\dot{a}_{k,v}\geq 2^{d^l \varepsilon n} \dot{b}^{(1+d^l \varepsilon)n+o(n)}$$ for some
$n\leq k \leq n+d^l m$ and some vertex $v=v(n)$. Taking the $k$th root and letting $k$ tend to infinity, we obtain that $$\dot{a}\geq 2^{\frac{d^l\varepsilon}{1+d^l\varepsilon}}\dot{b}$$ because $k\leq n+d^l m\leq (1+d^l \varepsilon)n$. This proves the desired result. 
\end{proof}

We will now assume that $G$ is a non-amenable graph. As already mentioned, it is a well-known result of Benjamin \& Schramm that $\dot{p}_c(G)\leq 1/(h_V+1)$ \cite{BeSchrPer}. We prove that this inequality is in fact strict.

\begin{theorem} \label{strict BS}
Let $G$ be an $1$-ended, $2$-connected, bounded degree, non-amenable graph $G$ satisfying \eqref{finitely presented}.
Then $\dot{p}_c(G)<\frac{1}{h_V(G)+1}$.
\end{theorem}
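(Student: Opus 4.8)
The non-strict inequality $\pcs(G)\le\frac{1}{h_V(G)+1}$ is already in hand: it follows from \Tr{cheeger-like} together with $I_V(G)\ge h_V(G)$. The whole point is to make it strict, and the plan is to argue by contradiction. Suppose $\pcs(G)=\frac{1}{h_V(G)+1}$, equivalently $r(\pcs)=h_V(G)$ where $r(p)=\frac{1-p}{p}$. I will derive from this that the exponential growth rates of site-interfaces and of lattice site-animals coincide, $\dot b(G)=\dot a(G)$, which contradicts \Tr{strict}.

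The first step is to pin both growth rates down at the extremal surface-to-volume ratio $h_V$. By the definition of $h_V$, every site-interface $P$ satisfies $|\partial P|\ge h_V|P|$ and every site-animal $A$ satisfies $|\partial_V A|\ge h_V|A|$, so $h_V$ is the smallest ratio that occurs; in particular $\dot a_r=\dot b_r=0$ for $r<h_V$ and the total rates are suprema over $r\ge h_V$ (the analogue of \Prr{equality}). At criticality the mean cluster size is infinite, so $\Pr_{\pcs}(|C_o|\ge n)$ does not decay exponentially; since in a non-amenable graph the interface of a cluster $C$ obeys $|S_o|\ge(h_V/\Delta)|C|$, the same holds for $\Pr_{\pcs}(|S_o|\ge n)$. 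Feeding this into the site versions of \Prr{no exp dec} and of Hammond's theorem — which go through in the present setting with \eqref{iso} playing the role of the quasi-geodesic bound — gives $\dot a_{r(\pcs)}(G)=\dot b_{r(\pcs)}(G)=f(r(\pcs))$, i.e.\ $\dot a_{h_V}=\dot b_{h_V}=f(h_V)$ under our assumption. Conversely, for $p<\pcs$ the Aizenman--Barsky theorem yields exponential decay of $\Pr_p(|C_o|=n)$, hence of $\Pr_p(|S_o|=n)$, so $\dot a_r<f(r)$ and $\dot b_r<f(r)$ strictly for every $r>h_V$. Note also that $\dot a,\dot b\ge f(r(\pcs))=f(h_V)$ by the (site analogue of the) inequality \eqref{b ge frp}.

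What remains — and this is the technical heart — is to show that both suprema $\dot a=\sup_{r\ge h_V}\dot a_r$ and $\dot b=\sup_{r\ge h_V}\dot b_r$ are attained at $r=h_V$, so that $\dot a=\dot b=f(h_V)$. Granting this, \Tr{strict} (whose proof in fact gives an explicit gap $\dot a\ge 2^{c}\dot b$ with $c>0$) is contradicted, and therefore $\pcs(G)<\frac{1}{h_V(G)+1}$. I expect this last step to be the main obstacle: the data $\dot a_r,\dot b_r\le f(r)$, $\dot a_{h_V}=\dot b_{h_V}=f(h_V)$ with strict inequality beyond $h_V$, log-concavity and continuity of $r\mapsto\dot a_r,\dot b_r$ (analogues of \Prr{concave} and \Tr{cont}), and the vanishing past $r=\Delta$ are not by themselves enough to force the supremum to sit at the left endpoint $h_V$, since a log-concave function can climb for a while past the left end of its support before coming down. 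Closing this gap will, I believe, require the quantitative content of the construction in the proof of \Tr{strict}: running that local-modification argument ratio-by-ratio to get $\dot a_r\ge(1+\delta)\,\dot b_r$ for $r$ in a neighbourhood of $h_V$, or otherwise ruling out that $\dot b_r$ (hence $\dot a_r$) can exceed $f(h_V)$ just above $h_V$ by combining the strict bound $\dot b_r<f(r)$ with a sharper comparison between $\dot b_r$ and $f(r)$ near $r=h_V$. Once $\dot a=\dot b=f(h_V)$ is established the contradiction is immediate.
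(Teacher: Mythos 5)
Your reduction does not close, and the gap you flag at the end is not merely technical --- the target you set yourself there is unreachable. You want to conclude, under the hypothesis $\pcs(G)=\frac{1}{h_V+1}$, that $\dot a(G)=\dot b(G)=f(h_V)$. But the only upper-bound information you have on $\dot a_r$ for $r>h_V$ is $\dot a_r<f(r)$, and $f$ is increasing, so nothing prevents $\sup_{r>h_V}\dot a_r$ from greatly exceeding $f(h_V)$; log-concavity and continuity cannot drag the supremum to the left endpoint of the support, as you yourself observe. In fact the paper's other results show your target is false rather than just hard: \Tr{strict} gives $\dot b(G)<\dot a(G)$, while the site analogue of \eqref{b ge frp} gives $\dot b(G)\ge f(r(\pcs))=f(h_V)$ under your hypothesis, so $\dot a(G)>f(h_V)$ always holds and ``$\dot a=f(h_V)$'' can never be derived. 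The intended contradiction therefore evaporates. (Separately, several intermediate claims --- that $\Pr_{\pcs}(|C_o|\ge n)$ has no exponential decay on a general bounded-degree $1$-ended graph, and that Hammond's theorem and \Prr{no exp dec} transfer to this generality --- would each need justification, but these are secondary to the structural problem above.)

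The paper's proof is direct and works at the level of occurrence probabilities rather than growth rates. It shows that the expected number of occurring site-interfaces with boundary size $n$ decays exponentially for all $p$ in a two-sided neighbourhood of $\frac{1}{h_V+1}$, which makes $\theta$ analytic there (as in \Tr{cheeger-like}) and hence forces $\pcs$ strictly below. The interfaces are split according to their volume: for those with $|P|\ge(1/h_V-\delta)|\partial P|$ one runs the local-modification map $P\mapsto P(T)$ from the proof of \Tr{strict}, checks via the isoperimetric inequality that $|\partial_V P(T)|$ exceeds $|\partial P|$ by only $O((\delta+\varepsilon)n)$, and thereby bounds $\sum_P\Pr_p(P\text{ occurs})$ by $2^{-\varepsilon n}$ times the expected number of occurring clusters, which is subexponential; for the remaining interfaces, \Lr{large deviation} gives exponential decay already for $p$ slightly below $\frac{1}{h_V+1}$. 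If you want to salvage your outline, this quantitative comparison of probabilities is exactly the missing ingredient --- and once you have it, the detour through $\dot a$, $\dot b$ and $f$ becomes unnecessary.
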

\begin{proof}
We will prove that for the values of $p$ in a neighbourhood of $1/(h_V(G)+1)$, the expected number of occurring site-\pint s of size $n$ decays exponentially in $n$.  This easily implies the theorem as for example, we can argue as in the proof of \Tr{cheeger-like} to deduce that $\theta$ is analytic in a neighbourhood of $1/(h_V(G)+1)$.

Consider some $\delta\in (0,1/h_V)$ and let $I_{n,\delta}$ be the set of site-\pint s $P$ such that $|\partial P|=n$ and $|P|\geq (1/h_V-\delta) n$. We will show that for a small enough $\delta$, the expected number of occurring elements of $I_{n,\delta}$ decays exponentially. Arguing as in the proof of \Tr{strict} and using the notation introduced there, we can associate to $I_{n,\delta}$ a set $Y_{n,\delta}$ of connected subgraphs such that 
$$|Y_{n,\delta}|\geq\Big(\dfrac{1}{\varepsilon C}\Big)^{\varepsilon(1/h_V-\delta) n+o(n)}|I_{n,\delta}|.$$
Given $P\in I_{n,\delta}$, $Q\in Y_{n,\delta}$ with $P$ being a pre-image of $Q$, we would like to show that $\Pr_p(P \text{ occurs})$ is not much larger than $\Pr_p(Q \text{ is open and } \partial_V Q \text{ is closed})$ (the latter is the probability that $Q$ is a cluster). In order to do this, we need to estimate the size of $\partial_V Q$. To this end, let $D$ be the union of all the finite components of $G\setminus \partial P$. By \ref{pint x} and \ref{pint b}, $P$ is contained in $D$ and $\partial_V D=\partial P$. Notice that 
$$n=|\partial P|\geq h_V\big(|P|+\big(|D|-|P|\big)\big)\geq n-h_V\delta n+h\big(|D|-|P|\big),$$ which implies that 
\begin{equation}\label{delta}
|D|-|P|\leq \delta n.
\end{equation}

Since $Q$ can be obtained from $P$ by adding at most $d^l$ vertices at $\left\lfloor{\varepsilon |P|}\right \rfloor$ places, we obtain that $|Q| \leq (1+d^l \varepsilon)|P|$. Moreover, by considering the size of the neighbourhood of $Q\setminus P$, we see that $$|\partial_V Q| \leq |\partial_V P|+d^{l+1}\varepsilon|P|\leq |\partial P|+|D|-|P|+d^{l+1}\varepsilon|P|,$$ where the term $|D|-|P|$ comes from the fact that $\partial_V P\setminus \partial P\subset D\setminus P$.
Using \eqref{delta} and the last inequality, we find that $$\Pr_p(P \text{ occurs})\leq \big(p(1-p)\big)^{-K(\delta+\varepsilon)n}\Pr_p(Q \text{ is open and } \partial_V Q \text{ is closed})$$ for a certain constant $K>0$. We can now choose $\delta=\varepsilon$ small enough that for every $p\in [h_V/2,(h_V+1)/2]$ we have
$$\sum_{P\in I_{n,\delta}}\Pr_p(P \text{ occurs})\leq 2^{-\varepsilon n} \sum_{Q\in Y_{n,\delta}} \Pr_p(Q \text{ is open and } \partial_V Q \text{ is closed}).$$ Now the non-amenability of $G$ implies that each $Q\in Y_{n,\delta}$ contains one of the first $2^{o(n)}$ vertices of $X$, hence
$\sum_{Q\in Y_{n,\delta}} \Pr_p(Q \text{ is open and } \partial_V Q \text{ is closed})=2^{o(n)}$ which in turn implies that $\sum_{P\in I_{n,\delta}}\Pr_p(P \text{ occurs})$ decays exponentially in $n$.

On the other hand, it follow from \Lr{large deviation} that for every $x>0$, the expected number of occurring interfaces $P$ with $|\partial P| =n$ and $|P|< \big(\frac{p}{1-p}-x\big)n$ decays exponentially in $n$. Hence for every $p>q\coloneqq\frac{1-h\delta}{h+1-h\delta}$, the expected number of occurring interfaces $P$ with $|\partial P| =n$ and $$|P|< \left(\frac{1}{h}-\delta\right) n=\frac{qn}{1-q}$$ under $\Pr_p$, decays exponentially in $n$, because $\frac{q}{1-q}<\frac{p}{1-p}$. Since we have $q<1/(h+1)$, the latter exponential decay holds at a neighbourhood of $1/(h+1)$. This completes the proof.
\end{proof}

\section{Conclusion}

In this paper we obtained basic properties of the function $b_r(G)$, and connected it to percolation theory and the enumeration of lattice animals. Many questions about $b_r(G)$ are left open, of which we mention just a few. We remarked that $\max_r b_r(G)$ is interesting, as it coincides with $b(G)$, which lower bounds the growth rate $a(G)$ of lattice animals. We expect that this maximum is attained at a single point $r= r_{max}$. What can be said about $r_{max}$? Is it always greater than $r(p_c)$? Is their ratio, or some other expression, independent of the lattice $G$ once the dimension is fixed?

We observed that $b_r$ is a continuous, almost everywhere differentiable function of $r$. Are stronger smoothness conditions satisfied? Is it smooth/analytic at every $r\neq r(p_c), r(1-p_c)$?

\section{Appendix: Continuity of the decay exponents} \label{cont dec exp}

In this appendix we prove that the rate of exponential decay $$c(p):= \lim_{n\to\infty} \Pr_p(|C_o|=n)^{1/n}$$ of the cluster size distribution ---which is known to exist for every $p\in (0,1)$ \cite{BanStTim,Grimmett}--- is a continuous function of $p$. This applies to bond and site percolation on our class of graphs \cs.

The fact that $c(p)<1$ for $p<p_c$ is a celebrated result of Aizenman \& Barsky \cite{AB}. For $p=p_c$ we always have $c(p)=1$. For $p>p_c$ various behaviours can arise depending on the underlying lattice \cite{AiDeSoLow, HerHutSup, KeZhaPro}. Our continuity result applies to the whole interval $p\in (0,1)$.

We will also prove the analogous continuity result for the (upper) exponential growth rate of $\Ex_p(N_n)$, i.e.\ $\limsup_{n\to\infty} \Ex_p(N_n)^{1/n}$, where as before $N_n$ denotes the number of occurring (site-)\pint s. 

\medskip
We will start by proving the continuity of $c(p)$.

\begin{theorem}\label{cp cont}
Consider bond or site percolation on a graph in $\mathcal{S}$. Then $c(p)$ is a continuous function of $p\in (0,1)$.
\end{theorem}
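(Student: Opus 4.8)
The plan is to show that when $0<p<p'<1$ are close, the quantities $\Pr_p(|C_o|=n)$ and $\Pr_{p'}(|C_o|=n)$ differ only by a factor exponential in $n$ with an arbitrarily small exponent, and then pass to $n$th roots. First I would write $\Pr_p(|C_o|=n)$ as a sum over clusters. Since $G$ is locally finite and connected, for every $n$ there are only finitely many connected subgraphs $A$ of $G$ of size $n$ containing $o$, the events $\{C_o=A\}$ are disjoint, and $\Pr_p(C_o=A)=p^{j(A)}(1-p)^{k(A)}$, where $j(A)$ is the number of occupied elements of $A$ and $k(A)$ the number of vacant boundary elements; this holds simultaneously for bond percolation (elements $=$ edges) and site percolation (elements $=$ vertices). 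Grouping the clusters of size $n$ according to the pair $(j,k)$ gives
$$\Pr_p(|C_o|=n)=\sum_{j,k} N_{n,j,k}\, p^{j}(1-p)^{k},$$
where $N_{n,j,k}$ counts such clusters. The crucial point is that every $G\in\mathcal S$ has bounded degree $\Delta:=\Delta(G)<\infty$, so there are constants $0<\gamma_1\le\gamma_2$ depending only on $\Delta$ with $N_{n,j,k}=0$ unless $\gamma_1 n\le j\le\gamma_2 n$ and $0\le k\le\gamma_2 n$.

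Next I would compare the two parameters term by term. For any admissible $(j,k)$,
$$\frac{p'^{\,j}(1-p')^{k}}{p^{\,j}(1-p)^{k}}=\Big(\frac{p'}{p}\Big)^{\!j}\Big(\frac{1-p'}{1-p}\Big)^{\!k},$$
and since $p'/p>1$ and $(1-p')/(1-p)<1$, the bounds $\gamma_1 n\le j\le\gamma_2 n$ and $0\le k\le\gamma_2 n$ place this ratio between $\big(\tfrac{p'}{p}\big)^{\gamma_1 n}\big(\tfrac{1-p'}{1-p}\big)^{\gamma_2 n}$ and $\big(\tfrac{p'}{p}\big)^{\gamma_2 n}$. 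Summing over $(j,k)$ yields
$$\Big(\frac{p'}{p}\Big)^{\!\gamma_1 n}\Big(\frac{1-p'}{1-p}\Big)^{\!\gamma_2 n}\Pr_p(|C_o|=n)\ \le\ \Pr_{p'}(|C_o|=n)\ \le\ \Big(\frac{p'}{p}\Big)^{\!\gamma_2 n}\Pr_p(|C_o|=n).$$
Taking $n$th roots and letting $n\to\infty$ (the limits defining $c(p)$ and $c(p')$ exist by \cite{BanStTim,Grimmett}, and $\Pr_p(|C_o|=n)>0$ so the ratios are legitimate) gives
$$\Big(\frac{p'}{p}\Big)^{\!\gamma_1}\Big(\frac{1-p'}{1-p}\Big)^{\!\gamma_2}\ \le\ \frac{c(p')}{c(p)}\ \le\ \Big(\frac{p'}{p}\Big)^{\!\gamma_2}.$$
Letting $p'\downarrow p$ forces $c(p')/c(p)\to1$, which is right-continuity of $c$ at $p$. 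Interchanging the roles of $p$ and $p'$ (so that now $p'/p<1$ and $(1-p')/(1-p)>1$, and the analogous two-sided estimate reads $\big(\tfrac{p'}{p}\big)^{\gamma_2}\le c(p')/c(p)\le\big(\tfrac{p'}{p}\big)^{\gamma_1}\big(\tfrac{1-p'}{1-p}\big)^{\gamma_2}$) gives left-continuity, and together these establish continuity of $c$ on $(0,1)$.

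I do not expect a genuine obstacle: the argument is a uniform comparison of the Boltzmann weights $p^{j}(1-p)^{k}$ as $p$ varies, exploiting that both $j$ and $k$ are $\Theta(n)$ for clusters of size $n$. The only points needing a little care are (i) verifying $\Pr_p(|C_o|=n)>0$ for the relevant $n$ — immediate, since one may take $A$ to be a path of $n$ elements through $o$ — and (ii) getting the directions of the inequalities right in the two cases $p'>p$ and $p'<p$. Finally, the very same scheme, applied to $\Ex_p(N_n)=\sum_{P}\Pr_p(P\text{ occurs})$ in place of $\Pr_p(|C_o|=n)$ and using that an interface $P$ of size $n$ has $|\partial P|=O(n)$ by the bounded-degree assumption, proves the announced continuity of the exponential growth rate of $\Ex_p(N_n)$ as well.
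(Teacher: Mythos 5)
Your argument is correct and is in substance the paper's own: both proofs rest on the fact that a size-$n$ cluster has $O(n)$ occupied and $O(n)$ vacant elements, so each weight $p^{j}(1-p)^{k}$ varies by at most a factor exponential in $n$ with exponent $O(|p'-p|)$ on compact subintervals of $(0,1)$. The paper packages this by bounding the derivative of $g_n(p)=\Pr_p(|C_o|=n)^{1/n}$ uniformly in $n$ (so the $g_n$ are uniformly Lipschitz and the pointwise limit $c(p)$ inherits the Lipschitz bound), while you integrate the same estimate into a direct two-sided comparison of $\Pr_p(|C_o|=n)$ and $\Pr_{p'}(|C_o|=n)$ before taking $n$th roots --- an essentially cosmetic difference.
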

\begin{proof}
Let $I$ be a compact subinterval of $(0,1)$. Define $g_n(p):=\Pr_p(|C_o|=n)^{1/n}$, and notice that $0\leq g_n(p)\leq 1$. Moreover, $g_n$ is a differentiable function with derivative equal to $g_n(p) \dfrac{\Pr_p '(|C_o|=n)}{n\Pr_p(|C_o|=n)}$, where $\Pr_p '(|C_o|=n)$ denotes the derivative of $\Pr_p(|C_o|=n)$. Expressing $\Pr_p '(|C_o|=n)$ via 
$\sum_{P} \big(\dfrac{n}{p}-\dfrac{|\partial P|}{1-p}\big)p^n (1-p)^{|\partial P|}$, where the sum ranges over all lattice (site) animals of size $n$, we conclude that there is a constant $c=c(I)>0$ such that 
$|\Pr_p '(|C_o|=n)|\leq cn \Pr_p (|C_o|=n)$ for every $p\in I$. Therefore, $g_n '$ is uniformly bounded on $I$, hence $g_n$ is a $c$-
Lipschitz function on $I$. The pointwise convergence of $g_n$ to $c(p)$ implies that $c(p)$ is a $c$-Lipschitz function on $I$, hence a continuous function.
\end{proof}

Define $B_p:=\limsup_{n\to\infty} \Ex_p(N_n)^{1/n}$. Before proving the continuity of $B_p$, we will show that $\lim_{n\to\infty} \Ex_p(N_n)^{1/n}$ exists for every $p$.

\begin{proposition}
Consider bond or site percolation on a graph in $\mathcal{S}$. Then for every $p\in (0,1)$, the limit $\lim_{n\to\infty} \Ex_p(N_n)^{1/n}$ exists.
\end{proposition}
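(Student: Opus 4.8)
The plan is to prove that $\Ex_p(N_n)$ is supermultiplicative up to subexponential factors, and then to conclude by a routine subadditivity argument --- exactly as in the proof that $c(p)=\lim_n\Pr_p(|C_o|=n)^{1/n}$ exists. Write $c_{n,m}$ for the number of (site-)\pint s $P$ of $o$ with $|P|=n$ and $|\partial P|=m$. Since $\Pr_p(P\text{ occurs})=p^{|P|}(1-p)^{|\partial P|}$, linearity of expectation gives $\Ex_p(N_n)=p^n\sum_m c_{n,m}(1-p)^m$, a sum with at most $\Delta n+1$ non-zero terms, where $\Delta$ is the maximum degree of $G$. Hence
$$p^n\max_m c_{n,m}(1-p)^m\ \le\ \Ex_p(N_n)\ \le\ (\Delta n+1)\,p^n\max_m c_{n,m}(1-p)^m .$$
Because $G$ has \pint s of $o$ of every sufficiently large size, $\Ex_p(N_n)$ is positive for all large $n$ and lies between $(p(1-p)^\Delta)^n$ and $p^nc_n$; in particular $a_n:=\log\Ex_p(N_n)$ is well defined for large $n$ and $a_n/n$ is bounded.

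The heart of the matter is the estimate $\Ex_p(N_{n_1+n_2+C})\ge \kappa\,Q(n_1+n_2)^{-1}\,\Ex_p(N_{n_1})\,\Ex_p(N_{n_2})$, valid for all large $n_1,n_2$, with $C=C(G)$ and $\kappa=\kappa(G,p)>0$ constants and $Q$ a polynomial. To get it, fix for each $i$ an $m_i$ maximising $c_{n_i,m}(1-p)^m$, and recall the gluing constructions from the proofs of \Prr{o box} and \Lr{o box site}: given a (site-)\pint\ of size $n_1$ with boundary size $m_1$ and one of size $n_2$ with boundary size $m_2$, one places their bounding boxes side by side, translates so that an extremal vertex of one lies opposite an extremal vertex of the other, and joins them by a short connecting path --- of length $2$ for $\Z^d$ and $\mathbb{T}^d$, and of length $o(n_1+n_2)$ for \qtl s --- obtaining a (site-)\pint\ of size $n_1+n_2+C$ and boundary size $m_1+m_2+C'$, with $C,C'$ bounded in the first case and sublinear in the second. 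Distinct ordered pairs of building blocks give distinct results, while a given glued (site-)\pint\ admits at most polynomially many decompositions of this type (choosing the interior vertex of the connecting path together with two of its incident edges determines the cut). This yields $c_{n_1+n_2+C,\,m_1+m_2+C'}\ge c_{n_1,m_1}c_{n_2,m_2}/Q_0(n_1+n_2)$ for a polynomial $Q_0$; multiplying by $p^{n_1+n_2+C}(1-p)^{m_1+m_2+C'}$ and feeding in the two displayed bounds on $\Ex_p(N_{n_i})$ gives the supermultiplicative estimate. No hypothesis such as $b_r>1$ is needed: if some size supports very few \pint s, one simply reuses the same block, as in the remark closing the proof of \Prr{o box}.

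Taking logarithms, $a_n$ satisfies $a_{n_1+n_2+C}\ge a_{n_1}+a_{n_2}-g(n_1+n_2)$ with $g(n)=O(\log n)$ (plus, in the \qtl\ case, an extra $o(n)$ slack in the index of the left-hand side), while $a_n/n$ stays bounded. A standard Fekete/Hammersley-type argument --- the same one underlying the existence of $c(p)$, and insensitive to the logarithmic correction, the bounded shift $C$, and the sublinear index slack --- then shows $a_n/n$ converges, i.e.\ $\lim_{n\to\infty}\Ex_p(N_n)^{1/n}$ exists. I expect the only genuine difficulty to be the bookkeeping of the second paragraph: checking that gluing \pint s multiplies the number of pre-images by at most a subexponential factor, which is already implicit in Sections~\ref{Growth rates}--\ref{Duality}. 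Incidentally, the same estimates pin down the limit as $p\max_{r\ge0}b_r(G)(1-p)^r$, the upper bound coming from upper-semicontinuity of $r\mapsto b_r(G)$ (\Prr{upper semi}) together with the at-most-countably-many exceptional $\epsilon$ in \Prr{limit}, and the matching lower bound from keeping, in $\Ex_p(N_n)=p^n\sum_m c_{n,m}(1-p)^m$, only the terms with $m\approx r^\ast n$ for a maximiser $r^\ast$ of $b_r(1-p)^r$.
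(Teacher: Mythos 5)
Your proposal is correct and takes essentially the same route as the paper: both glue interfaces along short connecting paths to get approximate supermultiplicativity of $\Ex_p(N_n)$ and then conclude by a Fekete-type argument. The only cosmetic differences are that the paper works directly with $\Ex_p(N_n)$ (no detour through $\max_m c_{n,m}(1-p)^m$) and concatenates $m$ blocks of size $n$, letting $m\to\infty$ before $n\to\infty$ instead of invoking a two-block Hammersley lemma; like you, it restricts "for simplicity" to bond interfaces on $\Z^d$ and $\mathbb{T}^d$ and leaves the remaining bookkeeping implicit.
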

\begin{proof}
For simplicity we will prove the assertion for \pint s in $\Z^d$ and $\mathbb{T}^d$. Let $m$ and $n$ be positive integers. We will consider \pint s without any restriction on the surface-to-volume ratio. Arguing as in the proof of \Prr{limit} we combine $m$ \pint s $P_1,P_2,\ldots,P_m$ of size $n$ that have the same shape, and attach a horizontal path to $P_m$, to obtain an \pint\ of size $k=m(n+4)+s$ for some $s$ between $0$ and $n+3$. Notice that the number of attached edges that were initially lying in some $\partial P_i$ is equal to $2m-1$. The probability that the resulting \pint\ occurs is equal to $p^k(1-p)^{M-(2m-1)+N}$, where $M=\sum_{i=1}^m|\partial P_i|$, and $N$ is the number of remaining boundary edges of the \pint . It is not hard to see that $N\leq Cm$ for some constant $C>0$. Hence $$p^k(1-p)^{M-(2m-1)+N}\geq p^{4m+s}(1-p)^{-(2m-1)+Cm} \prod_{i=1}^m p^n (1-p)^{|\partial P_i|}.$$ Summing over all possible sequences $(P_1,P_2,\ldots,P_m)$ we obtain $$\Ex_p(N_k)\geq p^{4m+s} (1-p)^{-(2m-1)+Cm}(\Ex_p(N_n))^m .$$ Taking the $k$th root, and then letting $m\to\infty$ and $n\to \infty$, we obtain that $\liminf_{n\to\infty} \Ex_p(N_n)^{1/n}\geq \limsup_{n\to\infty} \Ex_p(N_n)^{1/n}$, which implies the desired assertion.
\end{proof}

The proof of \Tr{cp cont} applies mutatis mutandis to $B_p$: instead of defining $g_n(p)$ as $\Pr_p(|C_o|=n)^{1/n}$, we define $g_n(p):=\Ex_p(N_n)^{1/n}$, and we use the fact that $\Ex_p(N_n)\leq ln+1$.

\begin{corollary}\label{cor bp cont}
Consider bond or site percolation on a graph in $\mathcal{S}$. Then $B_p$ is a continuous function of $p\in (0,1)$.
\end{corollary}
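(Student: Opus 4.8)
The plan is to repeat the proof of \Tr{cp cont} almost verbatim, with $\Pr_p(|C_o|=n)$ replaced by $\Ex_p(N_n)$ and with the trivial bound $\Pr_p(|C_o|=n)\le1$ replaced by the linear bound $\Ex_p(N_n)\le ln+1$ coming from \eqref{occurrence}. Fix a compact subinterval $I\subseteq(0,1)$; since every point of $(0,1)$ lies in such an interval, it suffices to show that $B_p$ is Lipschitz, hence continuous, on $I$. Set $g_n(p):=\Ex_p(N_n)^{1/n}$, so that $0\le g_n(p)\le(ln+1)^{1/n}$ for all $p\in I$.

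First I would record the expansion $\Ex_p(N_n)=\sum_{P}p^n(1-p)^{|\partial P|}$, the sum ranging over the (site-)\pint s $P$ of $o$ with $|P|=n$ (finitely many, and in the site case $|P|$ and $|\partial P|$ count vertices). If there are no such \pint s then $\Ex_q(N_n)\equiv0$ for all $q$ and $g_n\equiv0$ is trivially Lipschitz, so assume otherwise; then $\Ex_p(N_n)>0$ for every $p\in(0,1)$. Differentiating termwise gives $\Ex_p'(N_n)=\sum_{P}\big(\tfrac{n}{p}-\tfrac{|\partial P|}{1-p}\big)p^n(1-p)^{|\partial P|}$, and since every \pint\ satisfies $|\partial P|\le\Delta\,|P|=\Delta n$ (as used in the proof of \Prr{equality}, $\Delta$ the maximum degree of $G$), we obtain, for all $p\in I$, the estimate $|\Ex_p'(N_n)|\le\big(\tfrac{n}{p}+\tfrac{\Delta n}{1-p}\big)\Ex_p(N_n)\le c\,n\,\Ex_p(N_n)$ for a constant $c=c(I)>0$.

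Next, a direct computation yields $g_n'(p)=g_n(p)\,\dfrac{\Ex_p'(N_n)}{n\,\Ex_p(N_n)}$, whence $|g_n'(p)|\le c\,g_n(p)\le c\,(ln+1)^{1/n}$ on $I$. As $(ln+1)^{1/n}$ is bounded uniformly in $n$, the functions $g_n$ are $c'$-Lipschitz on $I$ for a single constant $c'$ not depending on $n$. By the preceding proposition, $\lim_n\Ex_p(N_n)^{1/n}$ exists for each $p$, so $g_n\to B_p$ pointwise on $I$; and a pointwise limit of uniformly $c'$-Lipschitz functions is itself $c'$-Lipschitz, hence continuous. Since $I\subseteq(0,1)$ was arbitrary, $B_p$ is continuous on $(0,1)$.

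The argument is essentially mechanical; the only step that is not purely formal is securing a Lipschitz constant for the $g_n$ that does not depend on $n$, and this is exactly where the sub-exponential (in fact linear) bound $\Ex_p(N_n)\le ln+1$ enters — with only, say, an exponential a priori bound on $\Ex_p(N_n)$ one could not conclude directly that the $g_n$ admit a common Lipschitz constant. The corresponding continuity statement for multi-interfaces follows in the same way from the analogous bound on $\Ex_p(N_n)$.
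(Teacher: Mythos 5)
Your proposal is correct and is essentially identical to the paper's intended argument: the paper explicitly says the proof of Theorem~\ref{cp cont} applies mutatis mutandis with $g_n(p):=\Ex_p(N_n)^{1/n}$ and the bound $\Ex_p(N_n)\le ln+1$, which is exactly what you do. The extra details you supply (the degenerate case, the bound $|\partial P|\le\Delta n$ to get the constant $c(I)$, and the uniform bound on $(ln+1)^{1/n}$) are correct fillings-in of the same route.
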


\bibliographystyle{plain}
\bibliography{collective}

\end{document}